\newtheorem{theorem}{Theorem}
\newtheorem{lemma}{Lemma}
\newtheorem{corollary}{Corollary}
\newtheorem{remark}{Remark}
\newtheorem{proposition}{Proposition}
\newtheorem{dbar}{$\bar{\partial}$-problem}
\newtheorem{rhp}{RH Problem}
\numberwithin{equation}{section}
\begin{document}

\begin{frontmatter}
\title{Long-time asymptotic behavior of  the Hunter-Saxton equation}

\author[inst2]{Luman Ju}
\author[inst2]{Kai Xu}
\author[inst2]{Engui Fan$^{*,}$  }

\address[inst2]{ School of Mathematical Sciences and Key Laboratory of Mathematics for Nonlinear Science, Fudan University, Shanghai, 200433, China\\
* Corresponding author and e-mail address: faneg@fudan.edu.cn  }

\begin{abstract}

With $\bar{\partial}$-generalization of the Deift-Zhou steepest descent method,
we investigate    the long-time asymptotics of the solution to  the Cauchy problem  for  the Hunter-Saxton (HS) equation
\begin{eqnarray}
&&u_{txx}-2\omega u_x+2u_xu_{xx}+uu_{xxx}=0,\quad x\in \mathbb{R},\ t>0,\label{mhs}\\
&&u(x,0)=u_0(x),  \label{mhs0}
\end{eqnarray}
where $u_0\in H^{3,4}(\mathbb{R})$ and  $\omega>0$ is a constant.
Using the new scale $(y,t)$ and a series of deformations to a Riemann-Hilbert  problem associated with    the Cauchy
  problem, we obtain  the long-time asymptotic approximations of the solution $u(x,t)$ in two  space-time regions:
 The solution  of the HS equation  decays as the speed of $\mathcal{O}(t^{-1/2})$ in the region   $y/t >0$; While
in the region  $y/t<0$,  the solution of the HS equation  is depicted by a parabolic cylinder  model    with an residual error order $\mathcal{O}(t^{-1+\frac{1}{2p}})$
with $ p>2$.
\end{abstract}

\begin{keyword}
 Hunter-Saxton equation, Riemann-Hilbert problem, $\bar{\partial}$-steepest descent method, long-time asymptotics.\\[6pt]

	\textit{Mathematics Subject Classification:} 35Q51; 35Q15; 35C20; 35P25.
\end{keyword}

\end{frontmatter}
\tableofcontents

\section{Introduction}

This paper is concerned with  the long time asymptotic behavior to the solution of   the Cauchy problem for the Hunter-Saxton (HS) equation \cite{Lnls}
\begin{eqnarray}
&&u_{txx}-2\omega u_x+2u_xu_{xx}+uu_{xxx}=0,\quad x\in \mathbb{R},\ t>0,\label{mhs}\\
&&u(x,0)=u_0(x),  \label{mhs0}
\end{eqnarray}
where $u_0\in H^{3,4}(\mathbb{R})$ and  $\omega>0$ is a constant.  The HS equation \eqref{mhs}   was derived by Hunter and Saxton as a short-wave limit of the Camassa-Holm (CH) equation \cite{HS}
\begin{equation}\label{CH}
u_t-u_{txx}-2\omega u_x+ 3uu_x=2u_xu_{xx}+uu_{xxx},
\end{equation}
which  was discovered as a model for unidirectional propagation of small amplitude shallow water waves by Camassa and Holm \cite{CH}. The CH equation also appears in the study of the propagation of axially
symmetric waves in hyperelastic rods \cite{CH3,CH4}.  Constantin and Lannes further found that the CH equation arises in the modeling of the propagation of shallow water waves over a flat bed  \cite{CH2}.
Due to its astonishing amounts of structures and relating interesting physical phenomena, the CH equation has received extensive attention. For example, Constantin et al. proved the existence and uniqueness of global weak solutions for the CH equation \eqref{CH}   \cite{Cstntin2}, and  discussed the scatting problem of the CH equation  \cite{Cstntin1}.  Besides,
Constantin et al showed  the orbital stability of the peakons and the solitary wave solution for the Cauchy problem of the CH equation \eqref{CH} \cite{Cstntin3,Cstntin4}.
Boutet  de Monvel et al. first investigated the  related Riemann-Hilbert (RH)  problem
	for the CH equation \cite{CH,CH1}.
	They  further  obtained  long-time asymptotics  and  Painlev\'e-type asymptotics for CH equation on the line and  half-line
	by using the  nonlinear steepest descent method	\cite{CH,CH1,Monvel22,Monvel16}. This method developed by  Deift and Zhou \cite{RN6} is
an   effective tool to  rigorously  analyze  the long-time asymptotics behavior of
	  integrable 	systems \cite{RN10,Grunert2009,MonvelCH,xu2015,Geng3,XF2020}.

As a short-wave limit of the CH equation, the HS equation has also received considerable attention.
For instance,
the HS equation (\ref{mhs}) was found to be a model for short capillary waves propagating under the action of gravity \cite{BKMP,FMN}.
Alber et.al studied the weak piecewise solutions of \eqref{mhs} by algebraic geometric methods \cite{Alber}.
The local existence of strong solutions of the periodic Hunter-Saxton equation has been proved by Yin \cite{Yin}.
The multi-cusp solutions of \eqref{mhs} were obtained by Matsuno et.al \cite{Matsu}.
Lenells et al. established Liouville transformation  between the HS hierarchies and the  KdV  hierarchies   \cite{Lens2008}.
Zuo proposed a two-component generalization of the generalized HS equation and  established its  bi-Hamiltonian Euler equation \cite{zuo}.
Zhao constructs the conservation laws of the HS equation for liquid crystal \cite{zhao}.
Lenells provides a rigorous foundation for the geometric interpretation of the HS equation, and yields explicit expressions for the spatially periodic solutions of the related Cauchy problem \cite{Lenells2008}.
Holden et.al developed  an existence theory for the Cauchy problem to the stochastic HS equation and proved  several properties of the blow-up of its solutions \cite{Holden}.
For initial data in Sobolev spaces $H^s(\mathbb{R})$, $s > 3/2$,  Ti\v{g}lay proved  local
well-posedness of   the periodic Cauchy problem for the modified HS equation   \cite{Tiglay}.
Recently Monvel et.al developed the inverse scattering transform and obtained  long-time asymptotics for the HS equation \eqref{mhs} with Schwartz initial data by using
steepest descent method \cite{Monvelmain}.

The aim of our   paper is to derive  the long time asymptotic behavior of the solution   for the HS equations \eqref{mhs}
with  a lower regularity weighted Sobolev  initial data
\begin{equation}
u_0\in H^{3,4}(\mathbb{R})=\{u \in L^2(\mathbb{R})|\langle \cdot \rangle^4 \partial^ju\in L^2(\mathbb{R}),\   j=0,1,\cdots,3\}.
\end{equation}
The key tool to   proved this result      is  the   $\bar{\partial}$-generalization of the steepest descent method, which
 was first proposed by   McLaughlin and Miller \cite{dbar1,dbar2}.
In recent years, this  method shows some advantages and has been successfully used to study long-time asymptotic analysis, soliton resolution and
asymptotic stability of $N$-soliton solutions to integrable systems in a weighted Sobolev space \cite{dbar1,dbar2,dbar3,dbar4,dbar5,dbar6,dbar7,dbar8,dbar9,dbar10,dbar11,HG,dbar12,dbar13,dbar14,dbar15}.

In order to extend the result \cite{Monvelmain} to the solution with  weighted Sobolev initial data  $u_0\in H^{3,4}(\mathbb{R})$  using 
 the $\bar{\partial}$-steepest descent method, there two essential difficulties    need to be overcome.
Firstly, we need to establish a scattering map between   initial data and   reflection coefficient
 $$H^{3,4}(\mathbb{R}) \ni u_0 \to   r(k)\in H^4(\mathbb{R})\cap H^{1,1}(\mathbb{R}),$$
 which is given in  Proposition \ref{rk1}. This require us to make a series of  refined estimates on  eigenfunctions and scattering data starting from
initial data $u_0 \in H^{3,4}(\mathbb{R})$.  Secondly, we need to carefully deal with
the effect of  spectral singularity $k=0$.  In general, for the Schr\"{o}dinger equation and mKdV  equation, their solutions can be
 recovered from  the $k^{-1}$-coefficient of    the solution  of  their related RH problem as  $k\to \infty$ \cite{RN6,dbar7,dbar11}.
However, the solution of the HS equation need to be  recovered  from  the $k^2$-coefficient  of the solution of a  RH problem
as  $k\to 0$.  This  case  results in  a higher singular term  of $k^{-3}$ appearing  in the integral
to estimate    a pure $\bar{\partial}$-problem for $M^{(3)}(k)$.
To avoid  this   singularity,  we construct  a kind of  new   extension  functions $R_{jl}(k)$ such that $|\bar{\partial}R_{jl}|\lesssim |k|^2$ near $k=0$, which is given  in    Proposition \ref{Rjl}.


Without loss of generality, we only discuss the HS equation \eqref{mhs} as $\omega=1$, because under the scaling transformation
\begin{equation}
x=\omega^{-2}\tilde{x},\ t=\omega^{-1}\tilde{t},\ u(x,t)=\omega^{-3}\tilde{u}(\tilde{x},\tilde{t}),\
\end{equation}
the HS equation \eqref{mhs} becomes
\begin{equation*}
\tilde{u}_{\tilde{t}\tilde{x}\tilde{x}}-2 \tilde{u}_{\tilde{x}}+2\tilde{u}_{\tilde{x}}\tilde{u}_{\tilde{x}\tilde{x}}+\tilde{u}\tilde{u}_{\tilde{x}\tilde{x}\tilde{x}}=0.
\end{equation*}
Now we address our main result   as follows.

\begin{theorem}\label{main}
Let $u_0\in H^{3,4}(\mathbb{R})$ with $-u_{0xx} +1 >0,\ for\ all\ x\in \mathbb{R}$. Then the solution  to the Cauchy problem \eqref{mhs}-\eqref{mhs0}
 has the following asymptotic approximation as $t\to\infty$:
\begin{itemize}
  \item when $\xi=\frac{y}{t}>0$, the solution $u(x,t)$ decays fast to $0$,
      \begin{equation}
u(x,t)=\hat{u}(y(x,t),t)=\mathcal{O}(t^{-1/2}),\quad x(y,t)=y+\mathcal{O}(t^{-1/2});
\end{equation}
  \item when $\xi=\frac{y}{t}<0$,  the solution $u(x,t)$  has  the asymptotic expansion
  \begin{equation}
  u(x,t)=\hat{f}t^{-1/2}+\mathcal{O}(t^{-1+\frac{1}{2p}}),
  \end{equation}
  and
  \begin{equation}
x(y,t)=y-\frac{1}{2}\delta_1+\frac{\rho_0^{3/2}}{2}f_{11}t^{-\frac{1}{2}}+\mathcal{O}(t^{-1+\frac{1}{2p}}).
\end{equation}
  where $p>2$, $\hat{f}, f_{11}$ and $\delta_1$ are given in Section 3. Here the variable $y$ is defined as
\begin{equation*}
y=y(x,t)=x-\int_x^\infty(\sqrt{m(s,t)+1}-1)\mathrm{d}s.
\end{equation*}
\end{itemize}
\end{theorem}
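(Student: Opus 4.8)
The plan is to prove Theorem \ref{main} by a $\bar\partial$-steepest descent analysis of the Riemann--Hilbert (RH) problem encoding the Cauchy data \eqref{mhs0}. First I would use the direct scattering map of Proposition \ref{rk1} to pass from $u_0\in H^{3,4}(\mathbb R)$ to a reflection coefficient $r(k)\in H^4(\mathbb R)\cap H^{1,1}(\mathbb R)$, and set up the associated matrix RH problem for $M(k)=M(k;y,t)$, whose jump across the real axis is built from $r$ and the oscillatory factor $e^{\pm 2it\theta(k)}$. Because the HS equation is only linearized after the reciprocal change of variable, all of this is carried out in the new scale $(y,t)$, with $y(x,t)=x-\int_x^\infty(\sqrt{m(s,t)+1}-1)\,ds$; the sign of $\xi=y/t$ then governs the geometry of the phase. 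I would compute $\theta'(k;\xi)$ and read off its signature table: for $\xi>0$ there are no real stationary points on the relevant contour, so the oscillatory factor is uniformly decaying and the problem is asymptotically trivial, whereas for $\xi<0$ a pair of real stationary phase points $k_0(\xi)$ appears, and it is these points that will produce the $t^{-1/2}$ contribution.

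The core of the argument is a sequence of deformations. I would first conjugate $M$ by the solution $\delta(k)$ of a scalar RH problem that absorbs the diagonal factor $1-r\bar r$ (or its analogue) and regularizes the behaviour at the stationary points; this replaces the jump by one admitting the triangular factorization needed to open lenses. Next, instead of analytically continuing $r$ off the axis (which $r\in H^{1,1}$ does not permit), I would introduce the non-analytic extensions $R_{jl}(k)$ of Proposition \ref{Rjl}, chosen so that the lens deformation is exact on the new contours and $|\bar\partial R_{jl}|\lesssim|k|^2$ near $k=0$. The result is a mixed RH--$\bar\partial$ problem $M^{(2)}$ whose jumps are supported on steepest-descent rays through $k_0$ and whose $\bar\partial$-derivative is supported in the lens sectors.

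I would then split $M^{(2)}=M^{(3)} E$, where $M^{(3)}$ solves the pure RH problem obtained by discarding the $\bar\partial$-part and $E$ corrects for it. In the region $\xi<0$, $M^{(3)}$ is matched near $k_0$ to a parabolic-cylinder (Weber) parametrix, yielding the leading term; away from the stationary points the contribution is exponentially small, and a small-norm argument controls the outer solution. The residual $\bar\partial$-problem for $E$ is recast as an integral equation whose solid Cauchy operator has small $L^\infty$-norm; estimating the resulting integral gives $\|E-I\|_{L^\infty}=\mathcal O(t^{-1+\frac1{2p}})$, the bound coming precisely from the $|k|^2$-vanishing of $\bar\partial R_{jl}$, which tames the $k^{-3}$ singularity forced by reconstructing from the $k^2$-coefficient at $k=0$. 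In the region $\xi>0$ the same scheme applies with no local model, giving the uniform decay $\mathcal O(t^{-1/2})$.

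Finally I would reconstruct the solution. Since $u$ is recovered from the $k^2$-coefficient of $M$ as $k\to0$ rather than the usual $k^{-1}$-coefficient as $k\to\infty$, I would expand $M(k)=I+M_1 k+M_2 k^2+\cdots$ and collect the contributions of the outer solution, the Weber parametrix, and the $\bar\partial$-error to obtain $\hat u(y,t)=\hat f\,t^{-1/2}+\mathcal O(t^{-1+\frac1{2p}})$ for $\xi<0$ and $\hat u=\mathcal O(t^{-1/2})$ for $\xi>0$, with $\hat f,f_{11},\delta_1$ the explicit constants from Section 3. The relation $x(y,t)$ follows by the same expansion applied to the change of variables, giving $x=y-\tfrac12\delta_1+\tfrac{\rho_0^{3/2}}2 f_{11}t^{-1/2}+\mathcal O(t^{-1+\frac1{2p}})$. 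I expect the main obstacle to be this reconstruction at the spectral singularity $k=0$: keeping the $\bar\partial$-contribution at order $t^{-1+\frac1{2p}}$ despite the $k^{-3}$ weight relies on the carefully engineered vanishing of $\bar\partial R_{jl}$, and is the step where the low regularity $u_0\in H^{3,4}$ is felt most sharply.
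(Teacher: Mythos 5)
Your proposal follows essentially the same route as the paper: the same scattering map $u_0\mapsto r$, the $(y,t)$ change of variables, the $\delta$-conjugation, the non-analytic extensions $R_{jl}$ engineered so that $|\bar\partial R_{jl}|\lesssim|k|^2$ near $k=0$, the splitting into a pure RH problem (solved via parabolic-cylinder parametrices at the two stationary points plus a small-norm error for $\xi<0$, and by small-norm alone for $\xi>0$) and a pure $\bar\partial$-problem, and finally reconstruction from the $k^2$-coefficient of the expansion at $k=0$. The only deviations are notational (your $M^{(3)}$ and $E$ are the paper's $M^R$ and $M^{(3)}$, respectively) and the slight imprecision that the $\mathcal{O}(t^{-1+\frac{1}{2p}})$ bound attaches specifically to the $k^2$-coefficient of the $\bar\partial$-error (the paper's $M^{(3)}_2$), while its value at $k=0$ and its $k$-coefficient decay at the faster rate $\mathcal{O}(t^{-1})$.
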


This paper is arranged as follows.
In Section \ref{sec2}, with the condition $u_0\in H^{3,4}(\mathbb{R})$, we first make the direct scatting  transform. To improve the strictness of the integral estimates in $\bar{\partial}$-method, we then establish a scattering map between the initial data and the reflection coefficient and prove it detailed.  Further
  we construct a  RH problem associated with  the Cauchy problem \eqref{mhs}-\eqref{mhs0}.
In Section  \ref{sec3}, we deal with the RH problem in the region  $\xi <0$ with two stationary points.
With a series of  transformations,  the original RH problem is deformed into a mixed $\bar{\partial}$-RH problem that
  can be decomposed    into a pure RH problem and a $\bar{\partial}$-problem, which are asymptotically analyzed in
  Subsection \ref{3.3} and   Subsection \ref{3.4}. In this part, to deal with the spectral singularity, we construct a new function with special construction near $k=0$ in Proposition \ref{Rjl}.
 Thus,  we obtain the long-time asymptotic result for the solution $u(x,t)$ of  the HS equation \eqref{mhs} via the reconstruction formula.
Finally, in Section  \ref{sec4}, we show the long-time asymptotics for $u(x,t)$  in the  region  $\xi>0$ in the similar way.

\section{The IST under lower regular initial data }\label{sec2}

In this section,  based on the matrix Lax pair for the HS equation,  we establish
a scattering map  between the  initial data  $u_0\in  H^{3,4}(\mathbb{R})$
and the reflection coefficient $r \in  H^4(\mathbb{R})\cap H^{1,1}(\mathbb{R})$. Further a RH problem associated with  the Cauchy problem \eqref{mhs}-\eqref{mhs0}
   is   constructed.

\subsection{The Lax pair and Jost solutions}
The HS equation \eqref{mhs} admits  following  Lax pair \cite{Monvelmain}
\begin{eqnarray}
&&\tilde{\Phi}_x+\mathrm{i} k \sqrt{m+1} \sigma_3 \tilde{\Phi}=U \tilde{\Phi},\  \label{lax2-1} \\
&&\tilde{\Phi}_t-\mathrm{i} k\left\{\frac{1}{2 k^2}+u \sqrt{m+1}\right\} \sigma_3 \tilde{\Phi}=V \tilde{\Phi},\label{lax2-2}
\end{eqnarray}
where $\tilde{\Phi} \equiv \tilde{\Phi}(x, t, k)$ is a matrix function, $m:=-u_{xx}$, and
\begin{eqnarray}
&&U=\frac{m_x}{4(m+1)} \sigma_1,   \nonumber\\
&&V =-\frac{u m_x}{4(m+1)} \sigma_1-\frac{1}{4 \mathrm{i} k}\left[\sqrt{m+1}+\frac{1}{\sqrt{m+1}}-2\right] \sigma_3.\nonumber
\end{eqnarray}
Here $\sigma_1, \sigma_2, \sigma_3$ are the Pauli matrices
\begin{equation*}
\sigma_1=\left(\begin{array}{ll}
0 & 1 \\
1 & 0
\end{array}\right), \quad \sigma_2=\left(\begin{array}{cc}
0 & -\mathrm{i} \\
\mathrm{i} & 0
\end{array}\right), \quad \sigma_3=\left(\begin{array}{cc}
1 & 0 \\
0 & -1
\end{array}\right).
\end{equation*}
Considering the conservation law of the HS equation, we introduce
\begin{equation}
p(x,t,k)=x-\int_x^\infty(\sqrt{m(s,t)+1}-1)\mathrm{d}s-\frac{t}{2k^2}
\end{equation}
with
\begin{equation*}
p_x=\sqrt{m+1},\quad p_t=-\frac{1}{2k^2}-u\sqrt{m+1}.
\end{equation*}
Then the function $\tilde{\Phi}(x,t,k)$ satisfies the following asymptotics
\begin{equation*}
\tilde{\Phi}(x,t,k)\sim e^{-ikp(x,t,k)\sigma_3},\quad x\to \pm\infty.
\end{equation*}
Define a new matrix valued function
\begin{equation}\label{pp2}
\Phi\equiv\Phi(x,t,k)=\tilde{\Phi}e^{ikp(x,t,k)\sigma_3}
\end{equation}
satisfying the boundary condition
\begin{equation}\label{asymphi}
\Phi\sim I,\quad x\to\pm\infty.
\end{equation}
Following from \eqref{lax2-1}-\eqref{lax2-2}, $\Phi$ admits the new Lax pair
\begin{eqnarray}
&&\Phi_x+ikp_x[\sigma_3,\Phi]=U\Phi,\label{phi_x}\\
&&\Phi_t+ikp_t[\sigma_3,\Phi]=V\Phi,\label{phi_y}
\end{eqnarray}
which allows $\Phi$ to be written as the total differential
\begin{equation}\label{ingrt}
\mathrm{d}(e^{ikp(x,t,k)\hat{\sigma}_3}\Phi)=e^{ikp(x,t,k)\hat{\sigma}_3}[(U\mathrm{d} x+V\mathrm{d} t)\Phi].
\end{equation}
For any fixed $t>0$, integrating \eqref{ingrt} from $\pm\infty$ to $x$, we get
\begin{equation}\label{phipm}
\Phi_{\pm}=I+\int^x_{\pm\infty}e^{-ik\int_y^x\sqrt{m(\tilde{x},t)+1}\mathrm{d}\tilde{x}\hat{\sigma}_3}
(U\Phi_{\pm})\mathrm{d}y.
\end{equation}
On the other hand, the HS equation \eqref{mhs} also admits another matrix Lax pair
\begin{eqnarray}
&&(\tilde{\Phi}_0)_x+ik\sigma\tilde{\Phi}_0=U_0\tilde{\Phi}_0,\label{lax3-1}\\
&&(\tilde{\Phi}_0)_t+\frac{1}{2ik}\sigma_3\tilde{\Phi}_0=V_0\tilde{\Phi}_0,\label{lax3-2}
\end{eqnarray}
where $\tilde{\Phi}_0\equiv\tilde{\Phi}_0(x,t,k)$ is a matrix function and
\begin{eqnarray}
&&U_0=-\frac{ik}{2}m(i\sigma_2+\sigma_3),\nonumber\\
&&V_0=\frac{u_x}{2}\sigma_1+iku[\sigma_3+\frac{m}{2}(i\sigma_2+\sigma_3)].\nonumber
\end{eqnarray}
When $x\to\pm\infty$, $\tilde{\Phi}_0\sim e^{(-ikx-\frac{1}{2ik})\sigma_3}$. Take the transformation on $\tilde{\Phi}_0$ with
\begin{equation}\label{pp4}
\Phi_0\equiv\Phi_0(x,t,k)=\tilde{\Phi}_0e^{(ikx+\frac{t}{2ik})\sigma_3},
\end{equation}
then $\Phi_0\sim I,\ x\to \pm\infty$. And $\Phi_0$ admits the following Lax pair
\begin{eqnarray}
&&(\Phi_0)_x+ik[\sigma_3,\Phi_0]=U_0\Phi_0,\\
&&(\Phi_0)_t+\frac{1}{2ik}[\sigma_3,\Phi_0]=V_0\Phi_0.
\end{eqnarray}
$\Phi_0$ admits the following asymptotic result at $k=0$:
\begin{equation}\label{phi0}
\Phi_0=I+\frac{ik}{2}u_x(i\sigma_2+\sigma_3)-(ik)^2u\sigma_1+O(k^3),\quad k\to 0.
\end{equation}
Considering the linear dependence of $\tilde{\Phi}$ and $\tilde{\Phi}_0$ \cite{Monvelmain}, together with
 equations \eqref{pp2} and \eqref{pp4}, we get the following relation:
\begin{equation}\label{relat}
\Phi_{\pm}=Q(x,t)\Phi_{0\pm}e^{(-ikx-\frac{t}{2ik})\sigma_3}C_{\pm}(k)e^{ikp(x,t,k)\sigma_3},
\end{equation}
where
\begin{equation*}
Q(x,t)=\frac{1}{2}\left(\begin{array}{cc}
q+q^{-1}&q-q^{-1}\\q-q^{-1}&q+q^{-1}\end{array}\right),\quad q=q(x,t)=(m+1)^{\frac{1}{4}},
\end{equation*}
and
\begin{equation}
C_+(k)\equiv I,\quad C_-(k)=e^{ikc\sigma_3},\quad c=\int_{-\infty}^{+\infty}(\sqrt{m+1}-1)\mathrm{d}x. \label{ggd}
\end{equation}

Denote $\Phi_{\pm}$ as
\begin{equation*}
\Phi_{\pm}=(\Phi_{\pm}^{(1)}\quad \Phi_{\pm}^{(2)}),
\end{equation*}
where $\Phi_{\pm}^{(j)},\ j=1,2$ are columns.
By the linear dependence of $\tilde{\Phi}_{\pm}$, we have
\begin{equation}\label{s1}
\Phi_+=\Phi_-e^{-ikp(x,t,k)\hat{\sigma}_3}S(k),
\end{equation}
where
\begin{equation*}
S(k)=\left(\begin{array}{cc}
s_{11}(k)&s_{12}(k)\\
s_{21}(k)&s_{22}(k)\end{array}\right)
\end{equation*}
is independent with $x$ and $t$.

Considering the structure of the Lax pair \eqref{phi_x}-\eqref{phi_y}, $\Phi$ has the following symmetry:
\begin{equation}\label{symt}
\sigma_1\overline{\Phi(x,t,\bar{k})}\sigma_1=\Phi(x,t,k),\quad \Phi(x,t,-k)=\overline{\Phi(x,t,k)},
\end{equation}
thus
\begin{equation*}
s_{11}(k)=\overline{s_{22}(\bar{k})},\quad s_{21}(k)=\overline{s_{12}(\bar{k})}.
\end{equation*}
Therefore, we rewrite $S(k)$ as
\begin{equation}
S(k)=\left(\begin{array}{cc}
\overline{a(\bar{k})}&b(k)\\
\overline{b(\bar{k})}&a(k)\end{array}\right),
\end{equation}
and by \eqref{s1}, $a(k)$ and $b(k)$ are represented as
\begin{equation}\label{ab1}
a(k)=\mid\Phi_-^{(1)}\ \Phi_+^{(2)}\mid,\quad b(k)=e^{2ikp(x,t,k)}\mid\Phi_+^{(2)}\ \Phi_-^{(2)}\mid.
\end{equation}
Based on the asymptotic formula \eqref{asymphi}, we find $|S(k)|\equiv1$ and for $k\in\mathbb{R}$,
\begin{equation*}
|a(k)|^2=1+|b(k)|^2\geqslant1.
\end{equation*}
Combining \eqref{phi0}, \eqref{relat} and \eqref{ab1}, we deduce the expansion of $a(k)$ at $k=0$:
\begin{equation}\label{expa}
a(k)=1+ikc+(ik)^2\frac{c^2}{2}+O(k^3),
\end{equation}
where $c$ is given by (\ref{ggd}).
Besides, $a(k)$ has no zero in $\mathbb{C}$, which is found from the structure of relating Dirac equation \cite{Monvelmain}.

Define the reflection coefficient $r(k)$ as
\begin{equation}\label{rdef}
r(k)=-\frac{\bar{b}(k)}{a(k)},\quad k \in \mathbb{R},
\end{equation}
then
\begin{equation*}
|r(k)|^2=1-|a(k)|^{-2}<1,
\end{equation*}
and it has been proved that for $u_0(x)\in H^{3,4}(\mathbb{R})$, $1-|r(k)|^2>c_0>0$, where $c_0$ is a constant \cite{Monvelmain}. Based on the symmetry \eqref{symt}, we can also derive that $|r(k)|=|r(-k)|$.

With the method of iteration and the Neumann series, we proved that
$
\Phi_-^{(1)},\ \Phi_+^{(2)},\ a(k)$ are\ analytic\ in\ the\ half\ plane\ $\mathbb{C}^+:=\{k\in\mathbb{C}|\mathrm{Im}k>0\}$; $\Phi_-^{(2)},\ \Phi_+^{(1)},\ \overline{a(\bar{k})}$ are\ analytic\ in\ $\mathbb{C}^-:=\{k\in\mathbb{C}|\mathrm{Im}k<0\}$.

\subsection{Scattering map from $u_0(x)$ to  $r(k)$}
In this part, we discuss the relation between the initial data $u_0 $ and the reflection coefficient $r(k)$.  Now we give the main conclusion.
\begin{proposition}\label{rk1}
Suppose the initial data $u_0(x)\in H^{3,4}(\mathbb{R})$ satisfying
$-u_{0xx} +1\geq\epsilon_0>0$, then the map $u_0 \to r(k)$ is Lipschitz continuous from $H^{3,4}(\mathbb{R})$ to $H^4(\mathbb{R})\cap H^{1,1}(\mathbb{R})$.
\end{proposition}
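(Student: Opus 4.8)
The plan is to transfer the regularity and decay of the datum $u_0$ onto the reflection coefficient $r(k)$ in three stages --- control of the potential, quantitative estimates on the Jost solutions $\Phi_\pm$, and reconstruction of the scattering coefficients $a(k),b(k)$ --- and then to upgrade every estimate to a difference estimate to obtain the Lipschitz bound. First I would control the potential. Since $m=-u_{0xx}$ and $-u_{0xx}+1\ge\epsilon_0>0$, the map $u_0\mapsto(m+1)^{-1}$ is bounded and Lipschitz on the admissible set, so the entries of $U=\frac{m_x}{4(m+1)}\sigma_1$ are controlled by $m_x=-u_{0xxx}$. From $u_0\in H^{3,4}(\mathbb{R})$ one gets $\langle x\rangle^4 m_x\in L^2(\mathbb{R})$ together with $m_x\in L^1\cap L^2(\mathbb{R})$, so the potential lies in a weighted $L^2$-space with weight $\langle x\rangle^4$ and depends Lipschitz-continuously on $u_0$; the same holds for the potential $U_0\sim k\,m$ of the second Lax pair \eqref{lax3-1}--\eqref{lax3-2}, whose coefficient is proportional to $m=-u_{0xx}$ and hence carries one more $x$-derivative than $U$.

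Next I would estimate the Jost functions. Starting from the Volterra equation \eqref{phipm}, I would solve for $\Phi_\pm$ by a Neumann series, obtaining uniform bounds on $\Phi_\pm-I$ together with the half-plane analyticity already recorded in the excerpt. The crucial quantitative point is the bookkeeping between $x$ and $k$: each differentiation $\partial_k$ of the kernel produces a factor $\int_y^x\sqrt{m+1}\,\mathrm{d}\tilde x=O(x)$ from the exponential phase, and this growth is absorbed by the weight $\langle x\rangle^4$; iterating, $\partial_k^j(\Phi_\pm-I)\in L^2(\mathrm{d}k)$ for $0\le j\le 4$, which is the source of the $H^4$-smoothness of the scattering data. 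Conversely, integrating by parts in $x$ and using the (weighted) smoothness of $m$ extracted from the $U_0$-problem yields one power of decay in $k$, which produces the weight in $H^{1,1}(\mathbb{R})$.

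I would then reconstruct $r$. Inserting the Jost estimates into the determinant representations \eqref{ab1} gives integral formulas for $a(k)-1$ and $b(k)$ as $x$-integrals of (potential)$\times$(Jost function); propagating the above bounds shows $b\in H^4(\mathbb{R})\cap H^{1,1}(\mathbb{R})$ and that $a-1$ enjoys the same regularity, with the behavior at the spectral singularity $k=0$ fixed by the expansions \eqref{phi0} and \eqref{expa}. Since $a$ has no zeros in $\mathbb{C}$ and $|a|\ge 1$ on $\mathbb{R}$, a Sobolev-algebra argument controls both the product $\bar b\cdot a^{-1}$ and the reciprocal $a^{-1}$, so that $r=-\bar b/a\in H^4(\mathbb{R})\cap H^{1,1}(\mathbb{R})$. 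For the Lipschitz statement I would re-run all of the above for the difference of two admissible data $u_0,\tilde u_0$: the Volterra/Neumann construction makes $\Phi_\pm-\tilde\Phi_\pm$ depend linearly on $U-\tilde U$ with constants uniform on bounded sets, and the potential map is Lipschitz by the first stage, so the representation of $b$ and the reciprocal of $a$ deliver the Lipschitz bound for $r$.

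The hardest part, I expect, is the sharp low-regularity bookkeeping: one must simultaneously extract four $k$-derivatives and one unit of $k$-decay from a potential that carries only the weight $\langle x\rangle^4$ and essentially no spare $x$-derivatives, since $U$ already sits at the level of $u_{0xxx}$. Making the $H^4\cap H^{1,1}$ membership genuinely sharp is what forces one to work with the cleaner potential $U_0$, and the price is that its explicit factor of $k$ must be reconciled with the expansion $\Phi_0-I=O(k)$ near the spectral singularity $k=0$ so that neither the large-$k$ integrability nor the behavior at $k=0$ is lost when the two scattering descriptions are matched through the relation \eqref{relat}.
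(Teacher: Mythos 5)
Most of your plan coincides with the paper's actual proof: the paper also solves the Volterra equation \eqref{phipm} by Neumann iteration (Lemma \ref{revol}), differentiates the kernel in $k$ up to four times, absorbs each phase factor $h(x)-h(y)=\mathcal{O}(|x-y|+\|m\|_{L^1})$ into the weights carried by $m_x=-u_{0xxx}$ (Lemmas \ref{lem1} and \ref{tk}), concludes $\mathbf{n}^{\pm}(0,\cdot)\in H^4(\mathbb{R})$ (Proposition \ref{rk2}), reconstructs $a-1$ and $e^{2ikc_0}\overline{b}$ as quadratic expressions \eqref{an}--\eqref{bn} in these entries, and divides by $a$ using $|a|\geq 1$, the absence of zeros, and $1-|r|^2>c_0>0$; Lipschitz continuity is obtained, as you propose, by running the same linear Volterra estimates on differences of data. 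So for the $H^4(\mathbb{R})$ membership and the Lipschitz statement your proposal is essentially the paper's argument.

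The genuine gap is in your $H^{1,1}$ step. You propose to obtain the $k$-decay of $r$ from the second Lax pair, on the grounds that its potential $U_0\sim k\,m$ carries one more $x$-derivative than $U\sim m_x$. This cannot work: by \eqref{relat} and \eqref{ggd} the two scattering matrices are related by the constant conjugation $S_0(k)=e^{ikc\sigma_3}S(k)$, so the reflection coefficient --- and in particular its decay as $|k|\to\infty$ --- is identical for the two pairs; the explicit factor $k$ in $U_0$ exactly cancels the extra derivative of $m$ over $m_x$ (multiplication by $k$ is an $x$-derivative on the Fourier side). Worse, that factor makes the Jost solutions $\Phi_0$ uncontrollable as $|k|\to\infty$ (the Neumann series bound grows like $e^{|k|\|m\|_{L^1}}$), and large $|k|$ is precisely where the weight in $H^{1,1}$ matters; the $U_0$ pair is useful only near the spectral singularity $k=0$, which is exactly how the paper exploits it (\eqref{phi0}, \eqref{expa}, \eqref{rstr-u}). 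The paper's actual route stays inside the first Lax pair: multiply the Volterra representation of $n_{21}^{\pm}(0,k)$ by $k$, integrate by parts in $x$ to produce the boundary term $-\frac{1}{8i}\frac{m_x(0)}{m(0)+1}$ plus the terms $I_1^{\pm}$ and $I_2^{\pm}$ (the latter spending one $x$-derivative on $\bigl[\frac{m_x}{m+1}(1+n_{11}^{\pm})\bigr]_x$), and then read $k\overline{b}\in L^2$ off \eqref{bn}, with $k\overline{b'}\in L^2$ handled similarly. Your instinct that $U$ has no spare $x$-derivative is exactly the tension in this step, but the resolution has to be an integration by parts in the original Volterra equation, not a change of Lax pair.
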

Denote $\Phi_{\pm}(x,k)=\left(\phi_{jl}^{\pm}(x,k)\right)$ as the solutions of \eqref{phipm} for $t=0$. Considering \eqref{symt}, \eqref{ab1} as well as $a(k),\ b(k)$ are independent with $x$ and $t$, taking $x=t=0$, we have
\begin{eqnarray}
&&a(k)=\phi_{11}^-(0,k)\overline{\phi_{11}^+(0,k)}-\phi_{21}^-(0,k)\overline{\phi_{21}^+(0,k)},\nonumber\\
&&e^{2ikc_0}\overline{b(k)}=\phi_{11}^-(0,k)\phi^+_{21}(0,k)-\phi_{21}^-(0,k)\phi^+_{11}(0,k),\nonumber
\end{eqnarray}
where $c_0=\int_0^{\infty}(\sqrt{m+1}-1)\mathrm{d}x$ is real, and thus $\| \overline{b(k)}\|_{L^2(\mathbb{R})}=\| e^{2ikc_0}\overline{b(k)}\|_{L^2(\mathbb{R})}$.

Define $\textbf{n}^{\pm}(x,k)=(n^{\pm}_{11}(x,k),n^{\pm}_{21}(x,k))^T=(\phi_{11}^{\pm}(x,k)-1,\phi_{21}^{\pm}(x,k))^T$, then $a(k),\ b(k)$ are denoted as
\begin{eqnarray}
&&a(k)-1=n_{11}^-(0,k)\overline{n_{11}^+(0,k)}-n_{21}^-(0,k)\overline{n_{21}^+(0,k)}+n_{11}^-(0,k)+\overline{n_{11}^+(0,k)},\qquad\quad\label{an}\\
&&e^{2ikc_0}\overline{b(k)}=n_{11}^-(0,k)n^+_{21}(0,k)-n_{21}^-(0,k)n^+_{11}(0,k)+n^+_{21}(0,k)-n_{21}^-(0,k).\label{bn}
\end{eqnarray}
Therefore, to testify that $r(k)\in H^4(\mathbb{R})$, we only have to prove   $\textbf{n}^{\pm}(0,k)\in H^4(\mathbb{R})$, that is
\begin{proposition}\label{rk2}
The maps
\begin{equation*}
u_0(x)\to n_{11}^{\pm}(0,k),\quad u_0(x)\to n_{21}^{\pm}(0,k)
\end{equation*}
are Lipschitz continuous from $H^{3,4}(\mathbb{R})$ to $H^{4}(\mathbb{R})$.
\end{proposition}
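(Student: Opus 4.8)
The plan is to realize $\mathbf n^\pm(0,k)$ as the value at $x=0$ of the solution of the Volterra system coming from the first column of \eqref{phipm} at $t=0$, and then to trade smoothness in $k$ for weighted decay in $x$ through a Fourier/Plancherel argument. Introducing the potential
\begin{equation*}
w(x)=\frac{m_x(x)}{4(m(x)+1)}=\frac{-u_{0xxx}(x)}{4\,(1-u_{0xx}(x))},
\end{equation*}
the first column of \eqref{phipm} becomes the coupled system
\begin{gather*}
n_{11}^\pm(x,k)=\int_{\pm\infty}^x w(y)\,n_{21}^\pm(y,k)\,\mathrm dy,\\
n_{21}^\pm(x,k)=\int_{\pm\infty}^x e^{-2\mathrm ik\int_y^x\sqrt{m+1}\,\mathrm d\tilde x}\,w(y)\,\bigl(1+n_{11}^\pm(y,k)\bigr)\,\mathrm dy.
\end{gather*}
The hypotheses enter here: $1-u_{0xx}=m+1\ge\epsilon_0>0$ keeps the denominator away from zero, while $u_{0xx}\in H^1(\mathbb R)\hookrightarrow L^\infty(\mathbb R)$ bounds it from above, so $w$ is a bounded multiple of $u_{0xxx}$. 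Together with the algebra/product rules for weighted Sobolev spaces this gives $\langle x\rangle^4 w\in L^2(\mathbb R)$ and $w\in L^1(\mathbb R)$, and the Lipschitz dependence $u_0\mapsto w$ in these norms is the same computation.

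The decisive structural point is that the $H^4$ target in $k$ corresponds exactly to the weight $4$ available on $w$. First I would linearize the oscillatory phase by the change of variables $\zeta=\theta(x):=\int_0^x\sqrt{m(s)+1}\,\mathrm ds$, which is bi-Lipschitz on $\mathbb R$ because $\epsilon_0\le m+1\le C$, satisfies $\theta(x)-\theta(y)=\zeta-\zeta'$, and obeys $\theta(x)\sim x$ as $|x|\to\infty$. In these variables the kernel of the second equation is the pure oscillation $e^{-2\mathrm ik(\zeta-\zeta')}$ and, at $x=0$ (hence $\zeta=0$), the leading (first) iterate is, up to a sign and a half-line cutoff,
\begin{equation*}
n_{21}^\pm(0,k)\big|_{\mathrm{lin}}=\int_{\mathbb R} e^{2\mathrm ik\zeta'}\,\tilde w(\zeta')\,\mathbf 1_{\mathbb R^\pm}(\zeta')\,\mathrm d\zeta',\qquad \tilde w:=\frac{w}{\sqrt{m+1}},
\end{equation*}
a Fourier transform of the transported potential. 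By Plancherel, $\partial_k^j$ of this expression has $L^2(\mathbb R_k)$ norm comparable to $\|\zeta^j\tilde w\|_{L^2}$, so the linear part lies in $H^4(\mathbb R_k)$ precisely because $\langle\zeta\rangle^4\tilde w\in L^2$, i.e. because $\langle x\rangle^4 u_{0xxx}\in L^2$ --- exactly the top weighted component of $H^{3,4}$. The $n_{11}^\pm$ equation is handled the same way, its leading term being a similar oscillatory integral in $w$, again controlled by the weight on $w$.

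Next I would solve the full system by Neumann iteration and control the whole series in $H^4(\mathbb R_k)$. Each higher iterate is a multilinear expression in $w$ with nested oscillatory kernels $e^{-2\mathrm ik(\zeta_j-\zeta_{j+1})}$ over an ordered simplex in the $\zeta$-variables; convergence of the series is guaranteed by the Volterra ordering together with $w\in L^1$. To obtain the $H^4$ bound, one differentiates up to four times in $k$: every $\partial_k$ falling on a phase produces a factor $(\zeta_j-\zeta_{j+1})$, so after $j\le4$ derivatives one meets polynomial weights of total degree $\le4$ in the phase differences. These are distributed among the potential factors and absorbed by $\langle\zeta\rangle^4\tilde w\in L^2$ (with the remaining factors taken in $L^1$), which closes a weighted multilinear estimate that sums geometrically and yields $\mathbf n^\pm(0,\cdot)\in H^4(\mathbb R)$ with a bound depending only on $\|u_0\|_{H^{3,4}}$ and $\epsilon_0$.

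Finally, Lipschitz continuity follows by the same multilinear scheme applied to differences: writing the solution map as the resolvent $(\mathrm{Id}-\mathcal K_w)^{-1}$ acting on the inhomogeneity, a resolvent identity reduces $\mathbf n^\pm[u_0]-\mathbf n^\pm[\tilde u_0]$ to $w-\tilde w$ sandwiched between uniformly bounded resolvents, so the Lipschitz constant is inherited from the Lipschitz map $u_0\mapsto w$. The main obstacle is the uniform-in-$k$ control of the $k$-derivatives of the full series in the preceding paragraph: each differentiation generates the growing phase-difference weights $(\zeta_j-\zeta_{j+1})$, and the bookkeeping needed to redistribute exactly four such factors onto the potentials --- so that the weight-$4$ decay of $w$ is never exceeded and the multilinear bound still sums --- is the technical heart of the argument. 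Keeping these estimates uniform as $k\to0$, where the oscillation degenerates, is precisely what the mildness of the $H^4$ target makes possible, since only smoothness in $k$ and no decay is demanded, which is what allows the scheme to close.
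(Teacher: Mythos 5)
Your proposal is sound and shares its analytic core with the paper's proof: both start from the Volterra system coming from \eqref{phipm} for the first column, both convert $k$-derivatives into $\langle x\rangle$-weights on the potential $w=m_x/(4(m+1))$ via a Plancherel/duality argument, both rely on the Volterra (simplex) structure for invertibility, and both match the weight $4$ in $H^{3,4}$ exactly against the four derivatives demanded by $H^4$. The genuine difference is \emph{where} the differentiation happens. You expand the full Neumann series and differentiate it term by term, which forces the combinatorial redistribution of the phase factors $(\zeta_j-\zeta_{j+1})$ across the multilinear integrand — what you rightly identify as the technical heart of your scheme. The paper instead differentiates the equation $\textbf{n}=\textbf{n}_0+T\textbf{n}$ itself: applying $\partial_k$ up to four times yields the hierarchy \eqref{nk}--\eqref{nkkkk} of Volterra equations, all with the \emph{same} operator $T$, whose inhomogeneous terms are Leibniz combinations of $(T)_k,\dots,(T)_{kkkk}$ applied to lower-order derivatives of $\textbf{n}$. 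The work then splits into three localized lemmas — bounds on the derivatives of $\textbf{n}_0$ (Lemma \ref{lem1}), operator bounds on the derivatives of $T$ (Lemma \ref{tk}), and a $k$-uniform bound on the resolvent $(I-T)^{-1}$ from the factorial kernel estimate (Lemma \ref{revol}) — after which each derivative of $\textbf{n}$ is simply $(I-T)^{-1}$ applied to an already-controlled inhomogeneity. This buys precisely what your version must do by hand: the Leibniz rule automatically caps at four the total number of phase factors landing on any single block, so no multilinear bookkeeping or resummation of the series is ever needed. In the other direction, your substitution $\zeta=\int_0^x\sqrt{m+1}\,\mathrm{d}s$, which makes the phase exactly linear, is tidier than the paper's substitute inequality $|h(x)-h(y)|\lesssim|x-y|+\|m\|_{L^1}$ (the origin of the $\|m\|_{L^1}^j$ factors in Lemmas \ref{lem1} and \ref{tk}), and your resolvent-identity argument for the Lipschitz dependence makes explicit a point the paper largely leaves implicit.
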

The proof of this Proposition will be detailed later. Now we prove Proposition \ref{rk1} based on the results in Proposition \ref{rk2}.
\begin{proof}(\emph{Proposition \ref{rk1}})

As $\textbf{n}^{\pm}(0,k)\in H^4(\mathbb{R})$, by \eqref{an} and \eqref{bn}, it's obvious that $a(k)$ is bounded and $a'(k),\ a''(k),$ $a'''(k),\ a''''(k)\in L^2(\mathbb{R}),\ b(k)\in H^4(\mathbb{R})$. Thus $r(k)\in H^4(\mathbb{R})$.

We next prove $r(k)\in H^{1,1}(\mathbb{R})$, which equals to prove that $k\overline{b(k)},\ k\overline{b'(k)}\in L^2(\mathbb{R})$. Based on \eqref{phipm}, we find
\begin{equation*}
\begin{aligned}
kn^{\pm}_{21}(0,k)&=k\int_{\pm\infty}^0e^{2i\int_x^0\sqrt{m+1}\mathrm{d}y}\frac{m_x}{4\sqrt{m+1}}\mathrm{d}x+
k\int_{\pm\infty}^0e^{2ik\int_x^0\sqrt{m+1}\mathrm{d}y}\frac{m_x}{4\sqrt{m+1}}n_{11}^{\pm}(x,k)\mathrm{d}x\\
&=-\int_{\pm\infty}^0\frac{1}{8i}\frac{m_x}{m+1}\mathrm{d}e^{2ik\int_x^0\sqrt{m+1}\mathrm{d}y}-\int_{\pm\infty}^0\frac{1}{8i}\frac{m_x}{m+1}n_{11}^{\pm}(x,k)\mathrm{d}e^{2ik\int_x^0\sqrt{m+1}\mathrm{d}y}\\
&=-\frac{1}{8i}\frac{m_x(0)}{m(0)+1}+I_1^{\pm}+I_2^{\pm},
\end{aligned}
\end{equation*}
where
\begin{eqnarray}
&&I_1^{\pm}=-\frac{1}{8i}\frac{m_x(0)}{m(0)+1}n_{11}^{\pm}(0,k),\nonumber\\
&&I_2^{\pm}=\int_{\pm\infty}^0\frac{1}{8i}[\frac{m_x}{m+1}(1+n_{11}^{\pm}(x,k))]_xe^{2ik\int_x^0\sqrt{m+1}\mathrm{d}y}\mathrm{d}x\nonumber
\end{eqnarray}
belong to $L^2(\mathbb{R})$.
Therefore, by \eqref{bn}, we have
\begin{equation*}
\begin{aligned}
e^{2ikc_0}k\overline{b(k)}=&-\frac{1}{8i}\frac{m_x(0)}{m(0)+1}(n^-_{11}(0,k)-n^+_{11}(0,k))+(I_1^{+}+I_2^{+})n^-_{11}(0,k)\\
&-(I_1^{-}+I_2^{-})n^+_{11}(0,k)+(I_1^{+}+I_2^{+})-(I_1^{-}+I_2^{-}).
\end{aligned}
\end{equation*}
Thus we conclude that $k\overline{b(k)}\in L^2(\mathbb{R})$, and the proof of $k\overline{b'(k)} \in L^2(\mathbb{R})$ is similar.
\end{proof}
Therefore, we only have to prove Proposition \ref{rk2}, which needs some preparation work with several lemmas.
Take $\textbf{n}^+(x,k)$ for example and to convenient, we replace it by $\textbf{n}(x,k)$. By \eqref{phipm}, we have
\begin{equation}\label{n}
\textbf{n}(x,k)=\textbf{n}_0(x,k)+T\textbf{n}(x,k),
\end{equation}
where $T$ is an integral operator defined by
\begin{equation}
T\textbf{f}(x,k)=\int_x^{+\infty}K(x,y,k)\textbf{f}(y,k)\mathrm{d}y,
\end{equation}
with the kernel
\begin{equation}\label{K}
K(x,y,k)=\left(\begin{array}{cc}0&-\frac{m_y}{4(m+1)}\\-\frac{m_y}{4(m+1)}e^{2ik(h(y)-h(x))}&0\end{array}\right),
\end{equation}
and
\begin{equation}
\textbf{n}_0(x,k)=T\textbf{e}_1=\left(\begin{array}{c}0\\ -\int_x^{+\infty}\frac{m_y}{4(m+1)}e^{2ik(h(y)-h(x))}\mathrm{d}y\end{array}\right).
\end{equation}
Here the function $h(x)$ is defined as
\begin{equation*}
h(x)=\int_x^{\infty}\sqrt{m+1}\mathrm{d}\zeta,
\end{equation*}
and thus
\begin{equation*}
h(x)-h(y)=\int_x^y\sqrt{m+1}\mathrm{d}\zeta.
\end{equation*}
Taking the partial derivatives of $k$ for \eqref{n}, we get
\begin{eqnarray}
&&(\textbf{n})_k=\textbf{n}_1+T(\textbf{n})_k,\quad  \textbf{n}_1=(\textbf{n}_0)_k+(T)_k\textbf{n},\label{nk}\\
&&(\textbf{n})_{kk}=\textbf{n}_2+T(\textbf{n})_{kk},\quad \textbf{n}_2=(\textbf{n}_0)_{kk}+(T)_{kk}\textbf{n}+2(T)_k(\textbf{n})_k,\label{nkk}\\
&&(\textbf{n})_{kkk}=\textbf{n}_3+T(\textbf{n})_{kkk},\ \textbf{n}_3=(\textbf{n}_0)_{kkk}+(T)_{kkk}\textbf{n}+3(T)_{kk}(\textbf{n})_k+3(T)_k(\textbf{n})_{kk},\qquad\label{nkkk}\\
&&(\textbf{n})_{kkkk}=\textbf{n}_4+T(\textbf{n})_{kkkk},\ \textbf{n}_4=(\textbf{n}_0)_{kkkk}+(T)_{kkkk}\textbf{n}+4(T)_{kkk}(\textbf{n})_k+6(T)_{kk}(\textbf{n})_{kk}+4(T)_k(\textbf{n})_{kkk}.\quad\qquad\label{nkkkk}
\end{eqnarray}
To find the solutions of the differential equations \eqref{n}, \eqref{nk}, \eqref{nkk}, \eqref{nkkk} and \eqref{nkkkk}, we need several lemmas as follows:
\begin{lemma}\label{lem1}
For $u_0 \in H^{3,4}(\mathbb{R})$, the following estimates hold:
\begin{equation}\label{n0est}
\|\textbf{n}_0\|_{C^0(\mathbb{R}^+,L^2(\mathbb{R}))}\lesssim \| m_x\|_{L^2},\quad \| \textbf{n}_0\|_{L^2(\mathbb{R}^+\times\mathbb{R})}\lesssim \| m_x\|_{L^{2,\frac{1}{2}}};
\end{equation}
\begin{equation}\label{n0exp}
\begin{aligned}
&\| (\textbf{n}_0)_k\|_{C^0(\mathbb{R}^+,L^2(\mathbb{R}))}\lesssim \| m_x\|_{L^{2,1}}+\| m\|_{L^1}\| m_x\|_{L^2},\\ &\| (\textbf{n}_0)_k\|_{L^2(\mathbb{R}^+\times\mathbb{R})}\lesssim \| m_x\|_{L^{2,\frac{3}{2}}}+\| m\|_{L^1}\| m_x\|_{L^{2,\frac{1}{2}}};\\
\end{aligned}
\end{equation}
\begin{equation}
\begin{aligned}
&\| (\textbf{n}_0)_{kk}\|_{C^0(\mathbb{R}^+,L^2(\mathbb{R}))}\lesssim \| m_x\|_{L^{2,2}}+\| m\|_{L^1}\| m_x\|_{L^{2,1}}+\| m\| ^2_{L^1}\| m_x\|_{L^2},\\
&\| (\textbf{n}_0)_{kk}\|_{L^2(\mathbb{R}^+\times\mathbb{R})}\lesssim \| m_x\|_{L^{2,\frac{5}{2}}}+\| m\|_{L^1}\| m_x\|_{L^{2,\frac{3}{2}}}+\| m\| ^2_{L^1}\| m_x\|_{L^{2,\frac{1}{2}}};
\end{aligned}
\end{equation}
\begin{equation}
\begin{aligned}
&\|(\textbf{n}_0)_{kkk}\|_{C^0(\mathbb{R}^+,L^2(\mathbb{R}))}\lesssim\| m_x\|_{L^{2,3}}+\| m\|_{L^1}\| m_x\|_{L^{2,2}}+\| m\|^2_{L^1}\|m_x\|_{L^{2,1}}+\| m\| ^3_{L^1}\| m_x\|_{L^2},\\
&\| (\textbf{n}_0)_{kkk}\|_{L^2(\mathbb{R}^+\times\mathbb{R})}\lesssim\| m_x\|_{L^{2,\frac{7}{2}}}+\| m\|_{L^1}\| m_x\|_{L^{2,\frac{5}{2}}}+\| m\| ^2_{L^1}\| m_x\|_{L^{2,\frac{3}{2}}}+\| m\| ^3_{L^1}\| m_x\|_{L^{2,\frac{1}{2}}};
\end{aligned}
\end{equation}
\begin{equation}
\begin{aligned}
\| (\textbf{n}_0)_{kkkk}\|_{C^0(\mathbb{R}^+,L^2(\mathbb{R}))}&\lesssim \| m_x\|_{L^{2,4}}+\| m\|_{L^1}\| m_x\|_{L^{2,3}}+\| m\| ^2_{L^1}\| m_x\|_{L^{2,2}}+\| m\| ^3_{L^1}\| m_x\|_{L^{2,1}}\\
&+\| m\| ^4_{L^1}\| m_x\|_{L^{2}}.
\end{aligned}
\end{equation}
\end{lemma}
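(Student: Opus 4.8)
To prove Lemma \ref{lem1}, the plan is to reduce everything to the scalar second component, since $\textbf{n}_0=(0,\,n_0^{(2)})^{\top}$ with $n_0^{(2)}(x,k)=-\int_x^{\infty}\frac{m_y}{4(m+1)}\,e^{2ik(h(y)-h(x))}\,dy$, and then to recognize this integral as a (truncated) Fourier transform so that Plancherel's theorem converts the oscillatory $L^2_k$-norms into tractable spatial integrals. Concretely, I would change the variable of integration from $y$ to the phase variable $s=h(x)-h(y)=\int_x^y\sqrt{m+1}\,d\zeta$, which is legitimate because $h$ is strictly monotone (here I use the hypothesis $m+1\ge\epsilon_0>0$ together with $m\in L^{\infty}$, which follows from $u_0\in H^{3,4}$ by Sobolev embedding). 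Under this substitution $s$ runs over $[0,h(x)]$, $dy=ds/\sqrt{m+1}$, and
\[
n_0^{(2)}(x,k)=\int_0^{h(x)}G(s)\,e^{-2iks}\,ds,\qquad G(s)=-\frac{m_y}{4(m+1)^{3/2}}\Big|_{y=y(s)},
\]
so that $|G(s)|^2\,ds=\frac{m_y^2}{16(m+1)^{5/2}}\,dy$.

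Since the only $k$-dependence sits in the exponential, differentiating $j$ times simply pulls down $(-2is)^j$, giving $\partial_k^j n_0^{(2)}=\int_0^{h(x)}(-2is)^jG(s)e^{-2iks}\,ds$ with no product-rule terms. Viewing this as the Fourier transform (in the frequency $2k$) of $s\mapsto(-2is)^jG(s)\mathbf{1}_{[0,h(x)]}(s)$, Plancherel yields, for each fixed $x\ge0$,
\[
\big\|\partial_k^j n_0^{(2)}(x,\cdot)\big\|_{L^2(\mathbb{R}_k)}^2=\pi\,4^{j}\int_0^{h(x)}s^{2j}\,|G(s)|^2\,ds=\frac{\pi\,4^{j}}{16}\int_x^{\infty}s(y)^{2j}\,\frac{m_y^2}{(m+1)^{5/2}}\,dy.
\]
This is the heart of the argument: it replaces oscillatory $L^2_k$ control by a weighted spatial integral, leaving only the task of bounding the weight $s(y)^{2j}$.

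For the weight I would write $s(y)=(y-x)+\int_x^y(\sqrt{m+1}-1)\,d\zeta$ and use $|\sqrt{m+1}-1|\lesssim|m|$ (again via $m+1\ge\epsilon_0$), so that $s(y)\le\langle y\rangle+C\|m\|_{L^1}$ uniformly for $x\ge0$. Expanding $\big(\langle y\rangle+C\|m\|_{L^1}\big)^{2j}$ and discarding the odd cross terms by AM--GM reduces $s(y)^{2j}$ to $\sum_{i=0}^{j}\langle y\rangle^{2i}\|m\|_{L^1}^{2(j-i)}$; substituting back, using $(m+1)^{-5/2}\lesssim1$ and $\int_x^{\infty}\langle y\rangle^{2i}m_y^2\,dy\le\|m_x\|_{L^{2,i}}^2$, and taking square roots produces exactly the claimed sum $\sum_{i=0}^{j}\|m\|_{L^1}^{j-i}\|m_x\|_{L^{2,i}}$. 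This yields every $C^0(\mathbb{R}^+,L^2)$ estimate. For the $L^2(\mathbb{R}^+\times\mathbb{R})$ estimates I would additionally integrate in $x$ and apply Fubini, $\int_0^{\infty}\!\int_x^{\infty}F(y)\,dy\,dx=\int_0^{\infty}y\,F(y)\,dy$, which multiplies the integrand by $y$ and hence raises each spatial weight index by $1/2$ --- precisely the shift from $L^{2,i}$ to $L^{2,i+1/2}$ seen in the second lines of each estimate.

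The computations are otherwise routine; the two places that require genuine care are (i) justifying the change of variables and the uniform bounds $\epsilon_0\le m+1\lesssim1$, which is where the structural hypothesis $-u_{0xx}+1\ge\epsilon_0$ enters, and (ii) verifying that $u_0\in H^{3,4}$ supplies exactly the norms appearing on the right. The latter is the real constraint, and I expect it to be the main point to track: the top-order estimate for $(\textbf{n}_0)_{kkkk}$ involves $s^{8}$, whose leading weight is $\langle y\rangle^{8}$ and therefore demands $\|m_x\|_{L^{2,4}}=\|u_{0xxx}\|_{L^{2,4}}<\infty$, together with $\|m\|_{L^1}\lesssim\|m\|_{L^{2,4}}<\infty$. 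This explains why four $k$-derivatives are the natural ceiling and why $H^{3,4}$ (three derivatives, weight four) is the sharp space for this lemma.
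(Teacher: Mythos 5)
Your proposal is correct and is essentially the paper's own argument: both proofs convert the oscillatory $L^2_k$-norms into weighted spatial integrals via Plancherel (the paper runs the identical computation in dual form, pairing with $\varphi\in L^2(\mathbb{R})$ and applying Cauchy--Schwarz to $\widehat{\varphi}(h(x)-h(y))$, rather than changing variables to the phase and invoking Plancherel as an identity), both bound the phase by $h(x)-h(y)\lesssim (y-x)+\|m\|_{L^1}$ and expand to get the sum $\sum_{i}\|m\|_{L^1}^{j-i}\|m_x\|_{L^{2,i}}$, and both use the same Fubini step (integrating in $x$ first to gain a factor of $y$) to raise each weight index by $1/2$ in the $L^2(\mathbb{R}^+\times\mathbb{R})$ bounds. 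The one slip is notational and harmless: since $\sqrt{m+1}\to 1$ at infinity, $h(x)=\int_x^\infty\sqrt{m+1}\,\mathrm{d}\zeta$ diverges, so your phase variable $s$ ranges over $[0,\infty)$ rather than a finite interval $[0,h(x)]$; nothing in your argument uses finiteness of the upper limit, so the proof stands as written.
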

\begin{proof}
We take the proof of \eqref{n0exp} for example, and the rest are deduced similarly.

Take the derivative with respect to $\textbf{n}_0(x,k)$ for $k$, we get
\begin{equation*}
(\textbf{n}_0)_k(x,k)=\left(\begin{array}{c}0\\ -2i(h(y)-h(x))\int_x^{+\infty}\frac{m_y}{4(m+1)}e^{2ik(h(y)-h(x))}\mathrm{d}y\end{array}\right).
\end{equation*}
Considering that for $y>x$,
\begin{equation*}
h(x)-h(y)=\int_x^y\sqrt{m+1}\mathrm{d}\zeta\lesssim(y-x)+\| m\|_{L^1},
\end{equation*}
we deduce that for any function $\varphi(k)\in L^2(\mathbb{R})$ satisfying $\| \varphi\|_{L^2}=1$,
\begin{equation*}
\begin{aligned}
&\| (\textbf{n}_0)_k\|_{L^2(\mathbb{R})}=\sup_{\varphi\in L^2(\mathbb{R})}\int_0^{\infty}2i(h(x)-h(y))\int_x^{+\infty}\frac{m_y}{4(m+1)}e^{2ik(h(y)-h(x))}\varphi(k)\mathrm{d}y\mathrm{d}k\\
\lesssim&\sup_{\varphi\in L^2(\mathbb{R})}\left(\int_x^{+\infty}\frac{(y-x)m_y}{4(m+1)}\widehat{\varphi}(h(x)-h(y))\mathrm{d}y
+\| m\|_{L^1}\int_x^{+\infty}\frac{m_y}{4(m+1)}\widehat{\varphi}(h(x)-h(y))\mathrm{d}y\right)\\
\lesssim&\left(\int_x^{+\infty}|ym_y|^2\mathrm{d}y\right)^{1/2}
+\| m\|_{L^1}\left(\int_x^{+\infty}|m_y|^2\mathrm{d}y\right)^{1/2}.
\end{aligned}
\end{equation*}
Therefore,
\begin{equation*}
\| (\textbf{n}_0)_k\|_{C^0(\mathbb{R}^+,L^2(\mathbb{R}))}=\sup_{x\geqslant0}\| (\textbf{n}_0)_k\|_{L^2(\mathbb{R})}\lesssim\| m_x\|_{L^{2,1}}+\| m\|_{L^1}\| m_x\|_{L^2},
\end{equation*}
and
\begin{equation*}
\begin{aligned}
\| (\textbf{n}_0)_k\|_{L^2(\mathbb{R}^+\times\mathbb{R})}&\lesssim\left(\int_0^{+\infty}\int_x^{+\infty}|ym_y|^2\mathrm{d}y\mathrm{d}x
+\| m\|_{L^1}\int_0^{+\infty}\int_x^{+\infty}|m_y|^2\mathrm{d}y\mathrm{d}x\right)^{1/2}\\
&\lesssim\left(\int_0^{+\infty}\int_0^y|ym_y|^2\mathrm{d}x\mathrm{d}y\right)^{1/2}+\| m\|_{L^1}\left(\int_0^{+\infty}\int_0^y|m_y|^2\mathrm{d}x\mathrm{d}y\right)^{1/2}\\
&\lesssim\| m_x\|_{L^{2,\frac{3}{2}}}+\| m\|_{L^1}\| m_x\|_{L^{2,\frac{1}{2}}}.
\end{aligned}
\end{equation*}

\end{proof}
Next, we deal with the operators $(T)_k,\ (T)_{kk}$ and $(T)_{kkk}$, which have the integral kernel $(K)_k,\ (K)_{kk}$ and $(K)_{kkk}$ separately, where
\begin{equation}
(K)_{k}(x,y,k)=\left(\begin{array}{cc}0&0\\2i(h(x)-h(y))\frac{m_y}{4(m+1)}e^{2ik(h(y)-h(x))}&0\end{array}\right).
\end{equation}
$(K)_{kk}$,\ $(K)_{kkk}$ and $(K)_{kkkk}$ have the totally same form  with $2i(h(x)-h(y))$ replaced by $(2i(h(x)-h(y)))^2$,\ $(2i(h(x)-h(y)))^3$ and $(2i(h(x)-h(y)))^4$. These operators admit following estimates:
\begin{lemma}\label{tk}
For $u_0 \in H^{3,4}(\mathbb{R})$, the following operator bounds hold uniformly, and the operators are Lipschitz continuous of $u_0(x)$.
\begin{equation*}
\begin{aligned} &\| (T)_k\|_{L^2(\mathbb{R}^+\times\mathbb{R})\to C^0(\mathbb{R}^+,L^2(\mathbb{R}))}\lesssim\| m_x\|_{L^{2,1}}+\| m\|_{L^1}\| m_x\|_{L^2},\\
&\| (T)_k\|_{L^2(\mathbb{R}^+\times\mathbb{R})\to L^2(\mathbb{R}^+\times\mathbb{R})}\lesssim\| m_x\|_{L^{2,\frac{3}{2}}}+\| m\|_{L^1}\| m\|_{L^{2,\frac{1}{2}}};
\end{aligned}
\end{equation*}
\begin{equation*}
\begin{aligned} &\| (T)_{kk}\|_{L^2(\mathbb{R}^+\times\mathbb{R})\to C^0(\mathbb{R}^+,L^2(\mathbb{R}))}\lesssim\| m_x\|_{L^{2,2}}+\| m\|_{L^1}\| m_x\|_{L^{2,1}}+\| m\| ^2_{L^1}\| m_x\|_{L^2},\\
&\| (T)_{kk}\|_{L^2(\mathbb{R}^+\times\mathbb{R})\to L^2(\mathbb{R}^+\times\mathbb{R})}\lesssim\| m_x\|_{L^{2,\frac{5}{2}}}+\| m\|_{L^1}\| m\|_{L^{2,\frac{3}{2}}}+\| m\| ^2_{L^1}\| m_x\|_{L^{2,\frac{1}{2}}};
\end{aligned}
\end{equation*}
\begin{equation*}
\begin{aligned}
&\| (T)_{kkk}\|_{L^2(\mathbb{R}^+\times\mathbb{R})\to C^0(\mathbb{R}^+,L^2(\mathbb{R}))}\lesssim\| m_x\|_{L^{2,3}}+\| m\|_{L^1}\| m_x\|_{L^{2,2}}+\| m\| ^2_{L^1}\| m_x\|_{L^{2,1}}+\| m\| ^3_{L^1}\| m_x\|_{L^2},\\
&\| (T)_{kkk}\|_{L^2(\mathbb{R}^+\times\mathbb{R})\to L^2(\mathbb{R}^+\times\mathbb{R})}\lesssim\| m_x\|_{L^{2,\frac{7}{2}}}+\| m\|_{L^1}\| m\|_{L^{2,\frac{5}{2}}}+\| m\| ^2_{L^1}\| m_x\|_{L^{2,\frac{3}{2}}}+\| m\| ^3_{L^1}\| m_x\|_{L^{2,\frac{1}{2}}};
\end{aligned}
\end{equation*}
\begin{align*}
&\| (T)_{kkkk}\|_{L^2(\mathbb{R}^+\times\mathbb{R})\to C^0(\mathbb{R}^+,L^2(\mathbb{R}))}\lesssim\| m_x\|_{L^{2,4}}
+\| m\|_{L^1}\| m_x\|_{L^{2,3}}+\| m\| ^2_{L^1}\| m_x\|_{L^{2,2}}\\
&+\| m\| ^3_{L^1}\| m_x\|_{L^{2,1}}+\| m\| ^4_{L^1}\| m_x\|_{L^2}.
\end{align*}
\end{lemma}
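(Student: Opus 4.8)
The plan is to reduce every bound to a scalar Volterra-type estimate and then reuse the two structural facts already exploited in Lemma~\ref{lem1}: the oscillatory factor has modulus one, and the phase satisfies $0\le h(x)-h(y)=\int_x^y\sqrt{m+1}\,\mathrm{d}\zeta\lesssim (y-x)+\|m\|_{L^1}$ for $0\le x\le y$. I first observe that each of the kernels $(K)_k,(K)_{kk},(K)_{kkk},(K)_{kkkk}$, written collectively as $\partial_k^n K$ with $n=1,2,3,4$, has its only nonzero entry in the $(2,1)$ slot, so the operator $\partial_k^n T$ sends $\textbf{f}=(f_1,f_2)^T$ to a vector whose single nonzero component is $\int_x^{+\infty}(2\mathrm{i}(h(x)-h(y)))^{n}\tfrac{m_y}{4(m+1)}e^{2\mathrm{i}k(h(y)-h(x))}f_1(y,k)\,\mathrm{d}y$. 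Using the hypothesis $m+1\ge\epsilon_0>0$ at $t=0$, this kernel is dominated pointwise by $((y-x)+\|m\|_{L^1})^{n}|m_y|$, and it suffices to control the resulting scalar operator acting on $F(y):=\|f_1(y,\cdot)\|_{L^2(\mathbb{R})}$, since $\|F\|_{L^2(\mathbb{R}^+)}\le\|\textbf{f}\|_{L^2(\mathbb{R}^+\times\mathbb{R})}$.

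For the $C^0(\mathbb{R}^+,L^2)$ bounds I apply Minkowski's integral inequality in $k$; because $|e^{2\mathrm{i}k(h(y)-h(x))}|=1$, this removes the oscillation and yields $\|(\partial_k^n T\,\textbf{f})(x,\cdot)\|_{L^2_k}\lesssim\int_x^{+\infty}((y-x)+\|m\|_{L^1})^n|m_y|F(y)\,\mathrm{d}y$. Expanding $((y-x)+\|m\|_{L^1})^n=\sum_{j=0}^n\binom{n}{j}(y-x)^{n-j}\|m\|_{L^1}^j$, estimating each summand by Cauchy--Schwarz in $y$, and using $(y-x)\le\langle y\rangle$ on $\{0\le x\le y\}$, each term is $\le\|m\|_{L^1}^j\big(\int_x^{+\infty}\langle y\rangle^{2(n-j)}|m_y|^2\mathrm{d}y\big)^{1/2}\|\textbf{f}\|_{L^2}\le\|m\|_{L^1}^j\|m_x\|_{L^{2,n-j}}\|\textbf{f}\|_{L^2}$. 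Summing in $j$ and taking the supremum over $x\ge0$ reproduces each stated $C^0$ estimate.

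For the $L^2(\mathbb{R}^+\times\mathbb{R})$ bounds I keep the same Minkowski step but pair Cauchy--Schwarz with the weight $\langle y\rangle\,\mathrm{d}y$: each summand is $\le\|m\|_{L^1}^j\big(\int_x^{+\infty}\langle y\rangle^{2(n-j)+1}|m_y|^2\mathrm{d}y\big)^{1/2}\big(\int_x^{+\infty}\langle y\rangle^{-1}F(y)^2\mathrm{d}y\big)^{1/2}$, whose first factor is bounded uniformly in $x$ by $\|m_x\|_{L^{2,n-j+1/2}}$. Squaring, integrating in $x$, and applying Fubini gives $\int_0^{+\infty}\!\!\int_x^{+\infty}\langle y\rangle^{-1}F^2\,\mathrm{d}y\,\mathrm{d}x=\int_0^{+\infty}\tfrac{y}{\langle y\rangle}F(y)^2\,\mathrm{d}y\le\|F\|_{L^2}^2$, which is exactly the mechanism that raises each weight by $\tfrac12$ and produces the half-integer exponents $n-j+\tfrac12$ in the claimed $L^2\to L^2$ bounds (with the understanding that the $\|m\|_{L^{2,1/2}}$ appearing in the $(T)_k$ line should read $\|m_x\|_{L^{2,1/2}}$).

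Finally, for Lipschitz continuity in $u_0$ I run the identical two computations on the difference of the kernels built from $m$ and $\tilde m=-\tilde u_{0xx}$, writing the $(2,1)$-entry difference as $(A-\tilde A)e^{2\mathrm{i}k(h(y)-h(x))}+\tilde A\big(e^{2\mathrm{i}k(h(y)-h(x))}-e^{2\mathrm{i}k(\tilde h(y)-\tilde h(x))}\big)$, where $A=(2\mathrm{i}(h(x)-h(y)))^n\tfrac{m_y}{4(m+1)}$ is $k$-independent. The amplitude difference obeys $|A-\tilde A|\lesssim((y-x)+\|m\|_{L^1})^{n}\big(|m_y-\tilde m_y|+|m_y|\,|m-\tilde m|\big)+|\tilde m_y|\,\big|(h(x)-h(y))^n-(\tilde h(x)-\tilde h(y))^n\big|$, and through $\big|(h-\tilde h)(x)-(h-\tilde h)(y)\big|\lesssim\int_x^y|m-\tilde m|$ every amplitude term feeds into the bounds above with one factor $\|m-\tilde m\|$ in an appropriate $L^{2,s}$ norm, hence $\lesssim\|u_0-\tilde u_0\|_{H^{3,4}}$. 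The step I expect to be the main obstacle is the phase difference, since $|e^{2\mathrm{i}k(h(y)-h(x))}-e^{2\mathrm{i}k(\tilde h(y)-\tilde h(x))}|\le 2|k|\int_x^y|m-\tilde m|$ carries a factor $|k|$ that the bare modulus-one argument cannot absorb on plain $L^2_k$; this factor must be paired against the $k$-regularity already available for the functions on which $(T)_k,\dots,(T)_{kkkk}$ actually act in the iteration \eqref{nk}--\eqref{nkkkk}, where the inputs lie in a Sobolev-in-$k$ space rather than in an arbitrary $L^2_k$, so that $|k|$ is measured against the input's $k$-derivative norm.
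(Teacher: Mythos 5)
Your treatment of the displayed operator bounds is correct and is, in substance, the paper's own proof: the paper's entire argument for this lemma is the sentence ``The proof of this lemma is the same as Lemma \ref{lem1}'', and your Minkowski--Cauchy--Schwarz--Fubini computation is exactly what that sentence has to mean for operators. Lemma \ref{lem1}'s duality device (pairing with $\varphi\in L^2_k$ and recognizing $\widehat{\varphi}(h(x)-h(y))$) does not survive verbatim once the integrand carries the $k$-dependent factor $f_1(y,k)$; the only substitute is to bound the oscillation by one, which is your Minkowski step (equivalently, duality followed by Cauchy--Schwarz in $k$), and your Fubini identity $\int_0^\infty\int_x^\infty\langle y\rangle^{-1}F^2\,\mathrm{d}y\,\mathrm{d}x\le\|F\|_{L^2}^2$ produces the half-integer weights just as the paper's $\int_0^\infty\int_0^y\mathrm{d}x\,|ym_y|^2\,\mathrm{d}y$ does in Lemma \ref{lem1}. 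Your reading of $\|m\|_{L^{2,1/2}}$, $\|m\|_{L^{2,3/2}}$, $\|m\|_{L^{2,5/2}}$ as typos for the corresponding $\|m_x\|$ norms is also right.

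The Lipschitz clause is where the real issue sits, and there your diagnosis is sound but your proposed repair is aimed at the wrong object. First, the obstruction is genuine, and the paper offers no argument for this clause either. It is in fact worse than an ``obstacle'': since $\sup_{k\in\mathbb{R}}|e^{2ik\Delta}-1|=2$ for every $\Delta\neq0$, the phase-difference term is never small uniformly in $k$, and one can correlate the test function with the phase to make this quantitative. Take $m,\tilde m$ equal outside $[a,b]$ with $c:=\int_a^b\bigl(\sqrt{m+1}-\sqrt{\tilde m+1}\bigr)\mathrm{d}\zeta\neq0$, fix $x_0<a$, and set $f_1(y,k)=g(y)e^{-2ik(h(y)-h(x_0))}\phi(k)$ with $g$ supported in $(b,\infty)$; then the phase part of the kernel difference applied to this input equals, at $x=x_0$, $\phi(k)\bigl(1-e^{2ikc}\bigr)\int g(y)\tilde A(x_0,y)\,\mathrm{d}y$, where $\tilde A$ is the $(2,1)$ amplitude built from $\tilde m$, and its $L^2_k$ norm does not shrink as $\tilde m\to m$. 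So $u_0\mapsto(T)_k,\dots,(T)_{kkkk}$ is not Lipschitz --- not even continuous --- into the operator norms named in the lemma, and the statement must be reinterpreted (as Lipschitz dependence of the specific quantities entering Proposition \ref{rk2}). Second, your suggested remedy misidentifies the currency: a factor $|k|$ is absorbed by decay in $k$ (a weight, as in $\|kf\|_{L^2_k}$), not by Sobolev regularity in $k$, and the iterates $\textbf{n},(\textbf{n})_k,\dots$ carry no a priori $k$-weighted bounds at this stage. The clean repair is to eliminate phase differences altogether: rewrite the Volterra equation in the variable $z(\zeta)=\zeta-\int_\zeta^\infty(\sqrt{m+1}-1)\,\mathrm{d}s$, for which $z'=\sqrt{m+1}$ and hence $h(x)-h(y)=z(y)-z(x)$, so the kernel's phase becomes $e^{-2ik(z(y)-z(x))}$, independent of the potential. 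Two potentials then differ only through amplitudes (compositions of the form $\frac{m_x}{4(m+1)^{3/2}}\circ x(z)$), your first two paragraphs apply verbatim to those differences, and the Lipschitz dependence of $\textbf{n}(0,\cdot)$, hence of $r$, on $u_0$ follows.
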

The proof of this lemma is the same as Lemma \ref{lem1}.

To solve the equations \eqref{n}, \eqref{nk}, \eqref{nkk}, \eqref{nkkk} and \eqref{nkkkk}, we finally discuss the existence of the operator $(I-T)^{-1}$. Denote $f^*(x)=\sup_{y\geqslant x}\| f(y,\cdot)\|_{L^2(\mathbb{R})}$, then by \eqref{K}, we find $K(x,y,k)\leqslant g(y)$ and
\begin{equation}\label{Tf*}
(Tf)^*(x)\leqslant\int_x^{\infty}g(y)f^*(y)\mathrm{d}y,
\end{equation}
where
\begin{equation*}
g(y)=\frac{m_y}{2(m+1)}.
\end{equation*}

Therefore, the resolvent $(I-T)^{-1}$ exists with following lemma:
\begin{lemma}\label{revol}
For each $k\in\mathbb{R}$ and $u_0 \in H^{3,4}(\mathbb{R})$, $(I-T)^{-1}$ exists as a bounded operator from $C^0(\mathbb{R}^+)$ to itself. What's more, $\hat{L}:=(I-T)^{-1}-I$ is an integral operator with continuous integral kernel $L(x,y,k)$ satisfying
\begin{equation}\label{gn}
|L(x,y,k)|\leqslant \exp(\| g\|_{L^1})g(y).
\end{equation}
\end{lemma}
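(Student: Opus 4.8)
The plan is to treat $T$ as a Volterra-type operator and to construct $(I-T)^{-1}$ from its Neumann series $\sum_{n\ge0}T^{n}$, exploiting the fact that the one-step majorant bound \eqref{Tf*} self-improves to factorial decay under iteration. First I would record that the scalar weight $g$ (which dominates $|K|$ entrywise, uniformly in the first slot and in $k\in\mathbb{R}$ since the exponential has modulus one) lies in $L^{1}(\mathbb{R}^{+})$: the hypothesis $-u_{0xx}+1\ge\epsilon_0>0$ gives $1/(m+1)\le 1/\epsilon_0$, while $m_x=-u_{0xxx}\in L^{2,4}(\mathbb{R})\subset L^{1}(\mathbb{R})$ by Cauchy--Schwarz (because $\langle\cdot\rangle^{-4}\in L^{2}$). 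Writing $G(x)=\int_x^{\infty}g(s)\,\mathrm{d}s$, I would then prove by induction on $n$ that
\[
(T^{n}f)^{*}(x)\le \frac{G(x)^{n}}{n!}\,\|f^{*}\|_{\infty},
\]
the base case being \eqref{Tf*} and the inductive step following from \eqref{Tf*} together with the elementary identity $\int_x^{\infty}g(y)G(y)^{n}\,\mathrm{d}y=\tfrac{1}{n+1}G(x)^{n+1}$, which is just $G'=-g$ integrated.

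Since $G(x)\le G(0)=\|g\|_{L^{1}}$, this yields the operator bound $\|T^{n}\|\le \|g\|_{L^{1}}^{\,n}/n!$ on $C^{0}(\mathbb{R}^{+},L^{2}(\mathbb{R}))$ (with norm $\|f\|=\sup_{x\ge0}\|f(x,\cdot)\|_{L^{2}}$), so that $\sum_{n\ge0}\|T^{n}\|\le e^{\|g\|_{L^{1}}}<\infty$. Hence the Neumann series converges in operator norm, $(I-T)^{-1}=\sum_{n\ge0}T^{n}$ is a bounded operator with $\|(I-T)^{-1}\|\le e^{\|g\|_{L^{1}}}$, and $\hat{L}=(I-T)^{-1}-I=\sum_{n\ge1}T^{n}$.

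For the kernel estimate I would unfold each power: $T^{n}$ is the integral operator with iterated kernel
\[
K_{n}(x,y,k)=\int_{x\le z_1\le\cdots\le z_{n-1}\le y}K(x,z_1,k)K(z_1,z_2,k)\cdots K(z_{n-1},y,k)\,\mathrm{d}z_1\cdots \mathrm{d}z_{n-1},
\]
so that $\hat{L}$ has kernel $L=\sum_{n\ge1}K_{n}$. Dominating every factor by $g$ evaluated at its second argument gives
\[
|K_{n}(x,y,k)|\le g(y)\int_{x\le z_1\le\cdots\le z_{n-1}\le y}g(z_1)\cdots g(z_{n-1})\,\mathrm{d}z=\frac{\big(\int_x^{y}g\big)^{n-1}}{(n-1)!}\,g(y)\le \frac{\|g\|_{L^{1}}^{\,n-1}}{(n-1)!}\,g(y),
\]
the middle equality being the standard ordered-simplex volume identity. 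Summing over $n\ge1$ produces exactly $|L(x,y,k)|\le e^{\|g\|_{L^{1}}}g(y)$.

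The routine points are the two elementary identities (the iterated $G$-integral and the simplex volume); the one genuinely delicate point is the asserted \emph{continuity} of $L$. The tail $\sum_{n\ge2}K_{n}$ is continuous, since each $K_{n}$ with $n\ge2$ is an integral against an $L^{1}_{\mathrm{loc}}$ density and the series converges uniformly by the bound above; the only obstruction is the first term $K_{1}=K$, which inherits merely the $L^{2}$ regularity of $m_x=-u_{0xxx}$. I would resolve this by first establishing the lemma for Schwartz initial data (where $K$ is smooth) and then passing to general $u_0\in H^{3,4}(\mathbb{R})$ via the Lipschitz dependence recorded in Proposition \ref{rk1}, the uniform bound $e^{\|g\|_{L^{1}}}g(y)$ guaranteeing stability of the limiting kernel.
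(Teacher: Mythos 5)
Your core argument coincides with the paper's own proof: the paper likewise treats $T$ as a Volterra operator, represents $\hat{L}$ by the iterated kernels $K_n$, bounds
$|K_n(x,y,k)|\leqslant \frac{1}{(n-1)!}\bigl(\int_x^{\infty}g(t)\,\mathrm{d}t\bigr)^{n-1}g(y)$
via the ordered-simplex identity, and sums the series to obtain \eqref{gn}. Your additional verifications (that $g\in L^{1}(\mathbb{R}^{+})$ because $m+1\geqslant\epsilon_0$ and $m_x\in L^{2,4}(\mathbb{R})\subset L^{1}(\mathbb{R})$, and the operator-norm induction $\|T^{n}\|\leqslant\|g\|_{L^{1}}^{\,n}/n!$) are details the paper leaves implicit, not a different route.

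The one place you diverge is the continuity of $L$, and there your argument does not go through, although you are right that this is the delicate point (the paper simply ignores it). First, the tail $\sum_{n\geqslant2}K_{n}$ is not continuous either: in $K_{n}(x,y,k)$ only the intermediate variables $y_1,\dots,y_{n-1}$ are integrated, while the last factor $K(y_{n-1},y,k)$ contributes the non-integrated function $\frac{m_y(y)}{4(m(y)+1)}$ of $y$, exactly as $K_1=K$ does; so every term of the series inherits the mere $L^2$ regularity of $m_y=-u_{0xxx}$. Second, the density argument cannot rescue continuity: if Schwartz data $u_0^{(j)}$ converge to $u_0$ in $H^{3,4}(\mathbb{R})$, the corresponding kernels converge only in $L^{2}$-type norms in $y$ (since $u_{0xxx}^{(j)}\to u_{0xxx}$ only in $L^{2,4}$, not uniformly), and continuity does not pass to limits in such topologies; indeed $L(x,y,k)$ is genuinely discontinuous in $y$ wherever $u_{0xxx}$ is, so no proof of the stated continuity can exist at this regularity. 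This is a defect of the lemma's statement rather than of your (or the paper's) Neumann-series argument: what is actually used afterwards is only the bound \eqref{gn} and the resulting boundedness of $\hat{L}=T+T(I-T)^{-1}T$ on $C^0(\mathbb{R}^+,L^2(\mathbb{R}))$ and $L^2(\mathbb{R}^+\times\mathbb{R})$, both of which your proof establishes.
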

\begin{proof}
By \eqref{n}, it's obvious that $T$ is a Volterra operator, and together with \eqref{Tf*}, we can deduce that $(I-T)^{-1}$ exists unique as a bounded operator on $C^0(\mathbb{R}^+)$. For the operator $\hat{L}$, the integral kernel $L(x,y,k)$ is given by
\begin{equation*}
L(x,y,k)=\left\{\begin{array}{ll}
\sum_{n=1}^{\infty}K_n(x,y,k),&x\leqslant y,\\
0,&x>y,
\end{array}\right.
\end{equation*}
where
\begin{equation*}
K_n(x,y,k)=\int_{x\leqslant y_1\leqslant\cdots\leqslant y_{n-1}}K_(x,y_1,k)K(y_1,y_2,k)\cdots K(y_{n-1},y,k)\mathrm{d}y_{n-1}\cdots \mathrm{d}y_1.
\end{equation*}
By the estimate $|K(x,y,k)|\leqslant g(y)$, we get
\begin{equation*}
|K_n(x,y,k)|\leqslant\frac{1}{(n-1)!}\left(\int_x^{\infty}g(t)\mathrm{d}t\right)^{n-1}g(y),
\end{equation*}
and then \eqref{gn} follows.
\end{proof}

By \eqref{Tf*}, we find that $T$ is a bounded operator as $T:L^2\to C^0,\ T:C^0\to L^2,$ and $T:L^2\to T^2$. Therefore, by the formula
\begin{equation*}
\hat{L}=(I-T)^{-1}-I=T+T(I-T)^{-1}T,
\end{equation*}
we deduce that $\hat{L}$ is a bounded operator as
$\hat{L}:C^0(\mathbb{R}^+,L^2(\mathbb{R}))\to C^0(\mathbb{R}^+,L^2(\mathbb{R}))$ and $\hat{L}:L^2(\mathbb{R}^+\times\mathbb{R})\to L^2(\mathbb{R}^+\times\mathbb{R})$.

Based on above results, we now turn back to give the proof of Proposition \ref{rk2}.
\begin{proof}(\emph{Proposition \ref{rk2}})

By \eqref{n}, we find
\begin{equation}\label{n2}
  \textbf{n}(x,k)=((I-T)^{-1}-I)\textbf{n}_0(x,k)+\textbf{n}_0(x,k).
\end{equation}
By \eqref{n0est} in Lemma \ref{lem1}, $\textbf{n}_0(x,k)\in C^0(\mathbb{R}^+,L^2(\mathbb{R}))\cap L^2(\mathbb{R}^+\times\mathbb{R})$,
and then Lemma \ref{revol} guarantees that there exists unique solution $\textbf{n}(x,k)$ of \eqref{n2} with $\textbf{n}(x,k)\in C^0(\mathbb{R}^+,L^2(\mathbb{R}))\cap L^2(\mathbb{R}^+\times\mathbb{R})$.
Similarly, together with Lemma \ref{tk}, we have
\begin{equation*}
\begin{aligned}
&\textbf{n}_k(x,k)\in C^0(\mathbb{R}^+,L^2(\mathbb{R}))\cap L^2(\mathbb{R}^+\times\mathbb{R}),\quad \textbf{n}_{kk}(x,k)\in C^0(\mathbb{R}^+,L^2(\mathbb{R}))\cap L^2(\mathbb{R}^+\times\mathbb{R}),\\
&\textbf{n}_{kkk}(x,k)\in C^0(\mathbb{R}^+,L^2(\mathbb{R}))\cap L^2(\mathbb{R}^+\times\mathbb{R}),\quad\textbf{n}_{kkkk}(x,k)\in C^0(\mathbb{R}^+,L^2(\mathbb{R})).
\end{aligned}
\end{equation*}
Taking $x=0$ in all above, we get $\textbf{n}(0,k)\in H^4(\mathbb{R})$.
\end{proof}
At the end of this part, we give a remark as a supplement of Proposition \ref{rk1}. It plays an important role in solving the singularity at $k=0$ in following sections.
\begin{remark}\label{rmk1}
If  $r(k)\in H^4(\mathbb{R})$, then  $r(k)\in C^{3}(\mathbb{R})$  by the Sobolev  embedding theorem.
\end{remark}

\subsection{The Riemann-Hilbert problem of the HS equation}
Define a new matrix-valued function $M(x,t,k)\equiv M(k)$ as follows:
\begin{equation}
M(k)=\left\{\begin{array}{lr}
\left(\frac{\Phi_-^{(1)}}{a\left(k\right)}\quad \Phi_+^{(2)}\right),\quad \mathrm{Im}k>0,\\
\left(\Phi_+^{(1)}\quad \frac{\Phi_-^{(2)}}{\overline{a(\bar{k})}}\right),\quad \mathrm{Im}k<0,\end{array}\right.
\end{equation}
which solves the following RH problem
\begin{rhp}\label{rhp1}
Find a $2\times 2$ matrix-valued function $M(k)$ satisfying \begin{itemize}
             \item Analyticity: $M(k)$ is analytic in $\mathbb{C}\setminus\mathbb{R};$
             \item Symmetry: $M(k)=\sigma_1 \overline{ M(\bar{k})}\sigma_1$,\quad $M(-k)=\overline{M(k)};$
             \item Jump condition: $M(k)$ has continuous boundary values $M_{\pm}(k)$ on $\mathbb{R}$ and
\begin{equation}
M_+(k)=M_-(k)V(x,t,k),\quad k\in \mathbb{R},
\end{equation}
where
\begin{equation}
V(x,t,k)=e^{-ikp(x,t,k)\hat{\sigma}_3}V_0(k),
\end{equation}
and
\begin{equation}
V_0(k)=\left(\begin{array}{cc}
1-|r(k)|^2&-\bar{r}(k)\\r(k)&1\end{array}\right);
\end{equation}
             \item Asymptotic behaviors:
\begin{equation}
M(k)=I+\mathcal{O}(k^{-1}),\quad k\to \infty.
\end{equation}
           \end{itemize}
\end{rhp}
Denoting $M(k)=(M_{lj}(k)),\ M_j(k)=M_{1j}(k)+M_{2j}(k)$,
from the expansion of $M_j(k)$ at $k=0$:
\begin{equation*}
M_j(k)\triangleq M_j(0)+M_{j,1}k+M_{j,2}k^2+\mathcal{O}(k^3),\ j=1,2,
\end{equation*}
we find
\begin{equation}\label{restr1}
\begin{aligned}
M_1(k)M_2(k)=\sqrt{m+1}(1+2k^2u+\mathcal{O}(k^3)).
\end{aligned}
\end{equation}

Introduce a new scale $(y,t)$ by
\begin{equation}\label{ytx}
y\equiv y(x,t)=x-\int_x^\infty(\sqrt{m(\zeta,t)+1}-1)\mathrm{d}\zeta,
\end{equation}
then we give a new matrix-valued function as
\begin{equation}\label{xty}
\widehat{M}(k)\equiv\widehat{M}(y(x,t),t,k):=M(x,t,k).
\end{equation}
By the coordinate transformation \eqref{ytx}, $\widehat{M}(k)$ obeys the jump condition
\begin{equation}
\widehat{M}_+(k)=\widehat{M}_-(k)\widehat{V}(y,t,k),
\end{equation}
where
\begin{equation}
\quad \widehat{V}(y(x,t),t,k)=V(x,t,k)=e^{-it\theta(\xi,k)\hat{\sigma}_3}V_0(k),
\end{equation}
and
\begin{equation}\label{theta}
\theta(\xi,k)=k\xi-\frac{1}{2k},\quad \xi=\frac{y}{t}.
\end{equation}
Combining \eqref{restr1} and \eqref{xty}, we reconstruct the formula of $u(x,t)$ as follows
\begin{equation}\label{rstr-u}
\begin{aligned}
u(x,t)&=\hat{u}(y(x,t),t)
=\lim_{k\rightarrow0}\frac{1}{2k^2}\left(
\frac{\widehat{M}_1(y,t,k)\widehat{M}_2(y,t,k)}
{\widehat{M}_1(y,t,0)\widehat{M}_2(y,t,0)}-1\right)\\
&=\widehat{M}_{1,1}\widehat{M}_{2,1}(y(x,t),t)+\widehat{M}_{1,2}(y(x,t),t)+\widehat{M}_{2,2}(y(x,t),t),
\end{aligned}
\end{equation}
where
\begin{equation}
x(y,t)=y-\lim_{k\rightarrow0}\frac{1}{2ik}\left(
\frac{\widehat{M}_1(y,t,k)}{\widehat{M}_1(y,t,0)}-1\right),
\end{equation}
and
\begin{equation*}
\widehat{M}_{j,l}(y(x,t),t):=M_{j,l}(x,t),\ j,l=1,2.
\end{equation*}

By \eqref{theta},
\begin{equation} \label{Imthta} \mathrm{Im}\theta=\mathrm{Im}k(\xi+\frac{1}{2|k|^2}),
\end{equation}
and thus we have the following lemma from direct calculation:
\begin{lemma}\label{Imk}
For $\xi\neq0$,\quad
$
\mathrm{Im} \theta(k)>\xi \mathrm{Im}k,\  as\ \mathrm{Im}k>0;\quad
\mathrm{Im} \theta(k)<\xi \mathrm{Im}k,\  as\ \mathrm{Im}k<0.
$
\end{lemma}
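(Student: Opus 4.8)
The plan is to reduce everything to the identity \eqref{Imthta}, which already records the imaginary part of $\theta$ in closed form, so essentially all the analytic content is in place. First I would note that $\xi=y/t$ is real, so taking imaginary parts of the definition \eqref{theta}, $\theta(\xi,k)=k\xi-\frac{1}{2k}$, splits into two contributions: $\mathrm{Im}(k\xi)=\xi\,\mathrm{Im}k$, while writing $1/k=\bar k/|k|^2$ gives $\mathrm{Im}\!\left(-\tfrac{1}{2k}\right)=\frac{\mathrm{Im}k}{2|k|^2}$. Adding these recovers \eqref{Imthta}, namely $\mathrm{Im}\theta=\mathrm{Im}k\,\bigl(\xi+\tfrac{1}{2|k|^2}\bigr)$; for $k\in\mathbb{C}\setminus\mathbb{R}$ we have $|k|^2>0$, so this expression is well defined.

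The key step is then to compare $\mathrm{Im}\theta$ with $\xi\,\mathrm{Im}k$ by subtraction. From \eqref{Imthta},
\[
\mathrm{Im}\theta-\xi\,\mathrm{Im}k=\frac{\mathrm{Im}k}{2|k|^2},
\]
and since $|k|^2>0$ the sign of the right-hand side coincides exactly with the sign of $\mathrm{Im}k$. Hence, when $\mathrm{Im}k>0$ the difference is strictly positive, giving $\mathrm{Im}\theta>\xi\,\mathrm{Im}k$, and when $\mathrm{Im}k<0$ it is strictly negative, giving $\mathrm{Im}\theta<\xi\,\mathrm{Im}k$. This is precisely the claim.

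There is no genuine obstacle here: the substance is the elementary identity \eqref{Imthta}, and the two inequalities follow by reading off a single sign. I would remark that the hypothesis $\xi\neq 0$ is not actually needed — the strict inequalities hold for every real $\xi$ as soon as $\mathrm{Im}k\neq 0$ — so the restriction $\xi\neq 0$ merely reflects the two space-time regimes $\xi>0$ and $\xi<0$ that are treated separately in the subsequent deformations. The only point requiring any care is that $k$ stays off the real axis so that $|k|^2>0$, which is automatic since the lemma is invoked only on $\mathbb{C}^{\pm}$ where the contour deformations take place.
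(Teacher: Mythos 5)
Your proof is correct and follows exactly the paper's route: the paper simply records the identity $\mathrm{Im}\theta=\mathrm{Im}k\left(\xi+\frac{1}{2|k|^2}\right)$ in \eqref{Imthta} and declares the lemma to follow ``from direct calculation,'' which is precisely the subtraction and sign-reading you carry out explicitly. Your side remark that the hypothesis $\xi\neq 0$ is superfluous is also accurate, since the difference $\mathrm{Im}\theta-\xi\,\mathrm{Im}k=\frac{\mathrm{Im}k}{2|k|^2}$ has the sign of $\mathrm{Im}k$ for every real $\xi$.
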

Before the asymptotic analysis, we first divide the $(y,t)$ half-plane into two regions as shown in Figure \ref{region} :

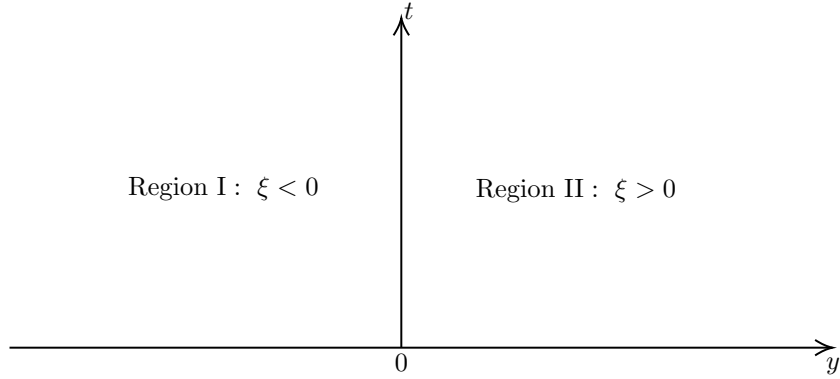
\begin{figure}
\tikzset{every picture/.style={line width=0.75pt}} 
\begin{tikzpicture}[x=0.75pt,y=0.75pt,yscale=-0.8,xscale=0.8]
uncomment if require: \path (-10,100); 

\draw    (71,254) -- (587.8,254) ;
\draw [shift={(589.8,254)}, rotate = 180] [color={rgb, 255:red, 0; green, 0; blue, 0 }  ][line width=0.75]    (10.93,-4.9) .. controls (6.95,-2.3) and (3.31,-0.67) .. (0,0) .. controls (3.31,0.67) and (6.95,2.3) .. (10.93,4.9)   ;
\draw    (318,48) -- (318,197) -- (318,254) ;
\draw [shift={(318,46)}, rotate = 90] [color={rgb, 255:red, 0; green, 0; blue, 0 }  ][line width=0.75]    (10.93,-4.9) .. controls (6.95,-2.3) and (3.31,-0.67) .. (0,0) .. controls (3.31,0.67) and (6.95,2.3) .. (10.93,4.9)   ;

\draw (584,258.4) node [anchor=north west][inner sep=0.75pt]    {$y$};
\draw (318,34.4) node [anchor=north west][inner sep=0.75pt]    {$t$};
\draw (312,256.4) node [anchor=north west][inner sep=0.75pt]    {$0$};
\draw (144,144.4) node [anchor=north west][inner sep=0.75pt]    {$\mathrm{Region} \ \mathrm{I} :\ \xi < 0$};
\draw (363,145.4) node [anchor=north west][inner sep=0.75pt]    {$\mathrm{Region} \ \mathrm{II} :\ \xi  >0$};
\end{tikzpicture}
\caption{The different regions of the $(y,t)$ half-plane, $\xi=\frac{y}{t}$.}
\label{region}
\end{figure}
\begin{enumerate}
    \item Region I: For $\xi<0$, the sign of $\mathrm{Im}\theta$ is shown in Figure \ref{sign1}, where $k_j$ are the two stationary points:
      \begin{equation}
      k_1=-\sqrt{-\frac{1}{2\xi}},\quad k_2=\sqrt{-\frac{1}{2\xi}}.
      \end{equation}
      For convenience, we denote $\rho_0=|k_j|=\sqrt{-\frac{1}{2\xi}}$;
      \item Region II: For $\xi>0$, $\mathrm{Im}\theta$ and $\mathrm{Im}k$ have the same sign, and there is no stationary point.
\end{enumerate}

In following sections, we discuss the asymptotic behavior for the two regions respectively.

\section{Asymptotic analysis in  the  region  $y/t<0$ } \label{sec3}

In this section,   we investigate the long-time asymptotic behavior in  the  region  $y/t<0$, which includes   two stationary points.
\subsection{Triangular decomposition}
First of all, based on the decay regions as shown in Figure \ref{sign1}, we need to decompose the jump matrix $\widehat{V}(k)$ into upper and lower triangular matrix. We therefore introduce a scalar function $\delta(k)$ satisfying the RH problem as follows.

\begin{rhp}\label{RHd}
Find a scalar function $\delta(k)$ with the following properties:
\begin{itemize}
  \item Analyticity: $\delta(k)$ is analytical in $\mathbb{C}\setminus\mathbb{R};$
  \item Jump condition: $\delta(k)$ has continuous boundary values $\delta_{\pm}$ and
      \begin{equation}
\left\{\begin{array}{ll}
\delta_+(k)=\delta_-(k)(1-|r|^2), &k\in\Gamma_1,\\
\delta_+(k)=\delta_-(k), &k\in(k_1,k_2),\end{array}\right.
\end{equation}
where $\Gamma_1=(-\infty,k_1)\cup(k_2,\infty);$
  \item Asymptotic behavior:
  \begin{equation}
  \delta(k)\to 1,\quad k\to \infty.
  \end{equation}
\end{itemize}
\end{rhp}
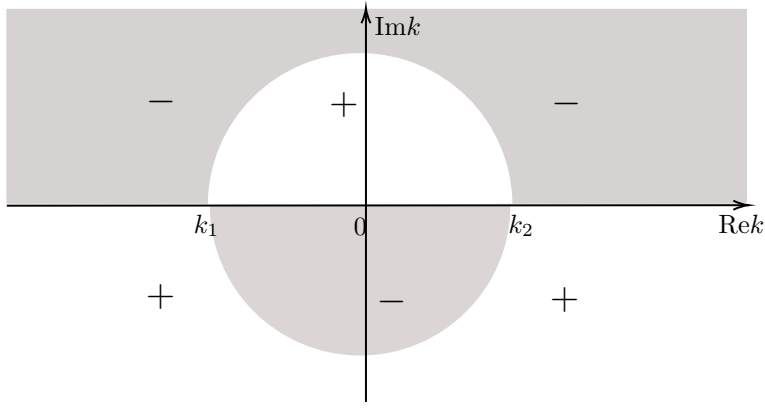
\begin{figure}

\tikzset{every picture/.style={line width=0.75pt}} 

\begin{tikzpicture}[x=0.75pt,y=0.75pt,yscale=-0.7,xscale=0.7]
uncomment if require: \path (-10,100); 

\draw  [color={rgb, 255:red, 255; green, 255; blue, 255 },draw opacity=1 ][fill={rgb, 255:red, 215; green, 210; blue, 210 },fill opacity=1 ] (71.6,0) -- (606.6,0) -- (606.6,142) -- (71.6,142) -- cycle ;
\draw  [color={rgb, 255:red, 255; green, 255; blue, 255 }  ,draw opacity=1 ][fill={rgb, 255:red, 255; green, 255; blue, 255 }  ,fill opacity=1 ] (218.2,142.4) .. controls (218.2,82.2) and (267,33.4) .. (327.2,33.4) .. controls (386.76,33.4) and (435.17,81.18) .. (436.18,140.5) -- (327.2,142.4) -- cycle ;
\draw  [color={rgb, 255:red, 255; green, 255; blue, 255 }  ,draw opacity=1 ][fill={rgb, 255:red, 219; green, 213; blue, 213 }  ,fill opacity=1 ] (436.2,142.4) .. controls (436.2,202.6) and (387.4,251.4) .. (327.2,251.4) .. controls (267,251.4) and (218.2,202.6) .. (218.2,142.4) -- (327.2,142.4) -- cycle ;
\draw    (72.67,142.67) -- (605,142.33) ;
\draw [shift={(607,142.33)}, rotate = 179.96] [color={rgb, 255:red, 0; green, 0; blue, 0 }  ][line width=0.75]    (10.93,-3.29) .. controls (6.95,-1.4) and (3.31,-0.3) .. (0,0) .. controls (3.31,0.3) and (6.95,1.4) .. (10.93,3.29)   ;
\draw    (331.4,284.2) -- (331.4,166.4) -- (331.4,2.2) ;
\draw [shift={(331.4,0.2)}, rotate = 90] [color={rgb, 255:red, 0; green, 0; blue, 0 }  ][line width=0.75]    (10.93,-3.29) .. controls (6.95,-1.4) and (3.31,-0.3) .. (0,0) .. controls (3.31,0.3) and (6.95,1.4) .. (10.93,3.29)   ;

\draw (205.33,146.73) node [anchor=north west][inner sep=0.75pt]    {$k_{1}$};
\draw (432,146.73) node [anchor=north west][inner sep=0.75pt]    {$k_{2}$};
\draw (583.33,147.07) node [anchor=north west][inner sep=0.75pt]    {$\mathrm{Re} k$};
\draw (335.67,4.4) node [anchor=north west][inner sep=0.75pt]    {$\mathrm{Im} k$};
\draw (320.63,149.2) node [anchor=north west][inner sep=0.75pt]    {$0$};
\draw (303,58.4) node [anchor=north west][inner sep=0.75pt]  [font=\Large]  {$+$};
\draw (171,197.4) node [anchor=north west][inner sep=0.75pt]  [font=\Large]  {$+$};
\draw (462,199.4) node [anchor=north west][inner sep=0.75pt]  [font=\Large]  {$+$};
\draw (337,200.4) node [anchor=north west][inner sep=0.75pt]  [font=\Large]  {$-$};
\draw (463,57.4) node [anchor=north west][inner sep=0.75pt]  [font=\Large]  {$-$};
\draw (171,56.4) node [anchor=north west][inner sep=0.75pt]  [font=\Large]  {$-$};

\end{tikzpicture}

  \centering
  \caption{The classification for sign of $\mathrm{Im}\theta$ in the case $\xi<0$. In the shaded regions, $\mathrm{Im}\theta<0$, which implies $e^{- it\theta}\to 0$ as $t\to\infty$; while in the white regions, $\mathrm{Im}\theta>0$ and thus $e^{it\theta}\to 0$ as $t\to\infty.$ }\label{sign1}
\end{figure}
By the Plemelj formula, there exists the unique solution for RH problem \ref{RHd} with
\begin{equation}
\delta(k)=\mathrm{exp}\left[i\int_{\Gamma_1}\frac{\nu(s)}{s-k}\mathrm{d}s\right],
\end{equation}
where
\begin{equation} \nu(s)=-\frac{1}{2\pi}\log(1-|r(s)|^2).
\end{equation}
Therefore, $\delta(k)$ has the following asymptotic expansion at $k=0$:
\begin{equation}
\delta(k)=1+\delta_1(ik)+\frac{\delta_1^2}{2}(ik)^2+\mathcal{O}(k^3),\quad k\to 0,
\end{equation}
where
\begin{equation}
\delta_1=\int_{\Gamma_1}\frac{\nu(s)}{s^2}\mathrm{d}s.
\end{equation}

Define
\begin{equation}
\beta_j(k)=\int_{\Gamma_1}\frac{\nu(s)}{s-k}\mathrm{d}s-\log(k-k_j)\nu(k_j),\ j=1,2,
\end{equation}
then $\delta(k)$ is rewritten as
\begin{equation}
\delta(k)=(k-k_j)^{i\nu(k_j)}e^{i\beta_j(k)}.
\end{equation}
For $r(k)\in H^1(\mathbb{R})$,
\begin{equation}\label{bj}
|\beta_j(k)-\beta_j(k_j)|\lesssim\|r\|_{H^1(\mathbb{R})}|k-k_j|^{1/2}.
\end{equation}
Now we use $\delta(k)$ to define a new matrix function
\begin{equation}\label{trs1}
M^{(1)}(k)\equiv M^{(1)}(y,t,k)=\widehat{M}(k)\delta(k)^{-\sigma_3},
\end{equation}
which is a solution for the following Riemann-Hilbert problem.
\begin{rhp}Find a $2\times 2$ matrix-valued function $M^{(1)}(k)$ with the following properties:
\begin{itemize}
  \item Analyticity: $M^{(1)}(k)$ is analytical in $\mathbb{C}\setminus\mathbb{R};$
  \item Jump condition: $M^{(1)}(k)$ has continuous boundary values $M^{(1)}_{\pm}(k)$ on $\mathbb{R}$ and
\begin{equation}
M^{(1)}_+(k)=M^{(1)}_-(k)V^{(1)}(k),
\end{equation}
where
\begin{equation}
V^{(1)}(k)=\left\{\begin{array}{ll}
\left(\begin{array}{cc}1&0\\ \frac{r}{1-|r|^2}e^{2it\theta}\delta_-^{-2}&1\end{array}\right)
\left(\begin{array}{cc}1&\frac{-\bar{r}}{1-|r|^2}e^{-2it\theta}\delta_+^2\\ 0&1\end{array}\right), &k\in \Gamma_1,\\
\left(\begin{array}{cc}1&-\bar{r}e^{-2it\theta}\delta_-^2\\ 0&1\end{array}\right)\left(\begin{array}{cc}1&0\\
re^{2it\theta}\delta_+^{-2}&1\end{array}\right), &k \in \mathbb{R}\setminus\Gamma_1;
\end{array}\right.
\end{equation}
  \item Asymptotic behaviors:
  $\ M^{(1)}(k)=I+\mathcal{O}(k^{-1}),\ as \ k\to \infty.$
\end{itemize}
\end{rhp}

\subsection{A mixed $\bar{\partial}$-RH problem and its  decomposition}
Through the triangular decomposition, we make a continuous extension to deal with the jump condition.
\begin{figure}
\tikzset{every picture/.style={line width=0.75pt}} 
\begin{tikzpicture}[x=0.75pt,y=0.75pt,yscale=-0.8,xscale=0.8]
uncomment if require: \path (-10,100); 
\tikzset{every picture/.style={line width=0.75pt}} 
\draw  [color={rgb, 255:red, 255; green, 255; blue, 255 }  ,draw opacity=1 ][fill={rgb, 255:red, 215; green, 210; blue, 210 }  ,fill opacity=1 ] (82,26) -- (586,26) -- (586,142) -- (82,142) -- cycle ;
\draw  [color={rgb, 255:red, 255; green, 255; blue, 255 }  ,draw opacity=1 ][fill={rgb, 255:red, 255; green, 255; blue, 255 }  ,fill opacity=1 ] (219.2,141.4) .. controls (219.2,81.2) and (268,32.4) .. (328.2,32.4) .. controls (387.76,32.4) and (436.17,80.18) .. (437.18,139.5) -- (328.2,141.4) -- cycle ;
\draw  [color={rgb, 255:red, 255; green, 255; blue, 255 }  ,draw opacity=1 ][fill={rgb, 255:red, 219; green, 213; blue, 213 }  ,fill opacity=1 ] (437,141.65) .. controls (437,141.65) and (437,141.65) .. (437,141.65) .. controls (437,201.99) and (387.8,250.9) .. (327.1,250.9) .. controls (266.4,250.9) and (217.2,201.99) .. (217.2,141.65) -- (327.1,141.65) -- cycle ;
\draw    (222.67,146) -- (293,216.33) ;
\draw [shift={(262.08,185.41)}, rotate = 225] [color={rgb, 255:red, 0; green, 0; blue, 0 }  ][line width=0.75]    (10.93,-3.29) .. controls (6.95,-1.4) and (3.31,-0.3) .. (0,0) .. controls (3.31,0.3) and (6.95,1.4) .. (10.93,3.29)   ;
\draw    (121.57,44.9) -- (222.67,146) ;
\draw [shift={(176.36,99.69)}, rotate = 225] [color={rgb, 255:red, 0; green, 0; blue, 0 }  ][line width=0.75]    (10.93,-3.29) .. controls (6.95,-1.4) and (3.31,-0.3) .. (0,0) .. controls (3.31,0.3) and (6.95,1.4) .. (10.93,3.29)   ;
\draw    (372,69.9) -- (439,146.33) ;
\draw [shift={(409.46,112.63)}, rotate = 228.76] [color={rgb, 255:red, 0; green, 0; blue, 0 }  ][line width=0.75]    (10.93,-3.29) .. controls (6.95,-1.4) and (3.31,-0.3) .. (0,0) .. controls (3.31,0.3) and (6.95,1.4) .. (10.93,3.29)   ;
\draw    (439,146.33) -- (536.57,243.9) ;
\draw [shift={(492.03,199.36)}, rotate = 225] [color={rgb, 255:red, 0; green, 0; blue, 0 }  ][line width=0.75]    (10.93,-3.29) .. controls (6.95,-1.4) and (3.31,-0.3) .. (0,0) .. controls (3.31,0.3) and (6.95,1.4) .. (10.93,3.29)   ;
\draw    (370,214.9) -- (435.67,143) ;
\draw [shift={(406.88,174.52)}, rotate = 132.41] [color={rgb, 255:red, 0; green, 0; blue, 0 }  ][line width=0.75]    (10.93,-3.29) .. controls (6.95,-1.4) and (3.31,-0.3) .. (0,0) .. controls (3.31,0.3) and (6.95,1.4) .. (10.93,3.29)   ;
\draw    (126,235.33) -- (219,142.33) ;
\draw [shift={(176.74,184.59)}, rotate = 135] [color={rgb, 255:red, 0; green, 0; blue, 0 }  ][line width=0.75]    (10.93,-3.29) .. controls (6.95,-1.4) and (3.31,-0.3) .. (0,0) .. controls (3.31,0.3) and (6.95,1.4) .. (10.93,3.29)   ;
\draw    (435.67,143) -- (538.77,39.9) ;
\draw [shift={(491.46,87.21)}, rotate = 135] [color={rgb, 255:red, 0; green, 0; blue, 0 }  ][line width=0.75]    (10.93,-3.29) .. controls (6.95,-1.4) and (3.31,-0.3) .. (0,0) .. controls (3.31,0.3) and (6.95,1.4) .. (10.93,3.29)   ;
\draw    (72.67,142.67) -- (605,142.33) ;
\draw [shift={(607,142.33)}, rotate = 179.96] [color={rgb, 255:red, 0; green, 0; blue, 0 }  ][line width=0.75]    (10.93,-3.29) .. controls (6.95,-1.4) and (3.31,-0.3) .. (0,0) .. controls (3.31,0.3) and (6.95,1.4) .. (10.93,3.29)   ;
\draw    (331.4,284.2) -- (331.4,166.4) -- (331.4,2.2) ;
\draw [shift={(331.4,0.2)}, rotate = 90] [color={rgb, 255:red, 0; green, 0; blue, 0 }  ][line width=0.75]    (10.93,-3.29) .. controls (6.95,-1.4) and (3.31,-0.3) .. (0,0) .. controls (3.31,0.3) and (6.95,1.4) .. (10.93,3.29)   ;
\draw  [draw opacity=0] (290.37,71.13) .. controls (314.37,86.16) and (330.47,112.65) .. (330.99,142.94) -- (244.55,144.45) -- cycle ; \draw   (290.37,71.13) .. controls (314.37,86.16) and (330.47,112.65) .. (330.99,142.94) ;
\draw    (218,143.33) -- (290,71.33) ;
\draw [shift={(258.24,103.09)}, rotate = 135] [color={rgb, 255:red, 0; green, 0; blue, 0 }  ][line width=0.75]    (10.93,-3.29) .. controls (6.95,-1.4) and (3.31,-0.3) .. (0,0) .. controls (3.31,0.3) and (6.95,1.4) .. (10.93,3.29)   ;
\draw  [draw opacity=0] (330.99,143.94) .. controls (331,144.44) and (331,144.95) .. (331,145.45) .. controls (331,175.29) and (315.89,201.59) .. (292.9,217.13) -- (244.55,145.45) -- cycle ; \draw   (330.99,143.94) .. controls (331,144.44) and (331,144.95) .. (331,145.45) .. controls (331,175.29) and (315.89,201.59) .. (292.9,217.13) ;
\draw  [draw opacity=0] (370.2,214.13) .. controls (347.21,198.59) and (332.1,172.29) .. (332.1,142.45) .. controls (332.1,112.07) and (347.77,85.36) .. (371.46,69.94) -- (418.55,142.45) -- cycle ; \draw   (370.2,214.13) .. controls (347.21,198.59) and (332.1,172.29) .. (332.1,142.45) .. controls (332.1,112.07) and (347.77,85.36) .. (371.46,69.94) ;
\draw [color={rgb, 255:red, 6; green, 6; blue, 6 }  ,draw opacity=1 ]   (317.8,191.15) -- (322.45,182.55) ;
\draw [shift={(323.4,180.79)}, rotate = 118.41] [color={rgb, 255:red, 6; green, 6; blue, 6 }  ,draw opacity=1 ][line width=0.75]    (10.93,-3.29) .. controls (6.95,-1.4) and (3.31,-0.3) .. (0,0) .. controls (3.31,0.3) and (6.95,1.4) .. (10.93,3.29)   ;
\draw [color={rgb, 255:red, 6; green, 6; blue, 6 }  ,draw opacity=1 ]   (344.48,98.23) -- (342.8,100.65) -- (346.69,94.81) ;
\draw [shift={(347.8,93.15)}, rotate = 123.69] [color={rgb, 255:red, 6; green, 6; blue, 6 }  ,draw opacity=1 ][line width=0.75]    (10.93,-3.29) .. controls (6.95,-1.4) and (3.31,-0.3) .. (0,0) .. controls (3.31,0.3) and (6.95,1.4) .. (10.93,3.29)   ;
\draw [color={rgb, 255:red, 6; green, 6; blue, 6 }  ,draw opacity=1 ]   (318.53,100.24) -- (321.51,105.22) ;
\draw [shift={(322.54,106.94)}, rotate = 239.04] [color={rgb, 255:red, 6; green, 6; blue, 6 }  ,draw opacity=1 ][line width=0.75]    (10.93,-3.29) .. controls (6.95,-1.4) and (3.31,-0.3) .. (0,0) .. controls (3.31,0.3) and (6.95,1.4) .. (10.93,3.29)   ;
\draw [color={rgb, 255:red, 6; green, 6; blue, 6 }  ,draw opacity=1 ]   (342.98,184.67) -- (345.97,189.65) ;
\draw [shift={(347,191.37)}, rotate = 239.04] [color={rgb, 255:red, 6; green, 6; blue, 6 }  ,draw opacity=1 ][line width=0.75]    (10.93,-3.29) .. controls (6.95,-1.4) and (3.31,-0.3) .. (0,0) .. controls (3.31,0.3) and (6.95,1.4) .. (10.93,3.29)   ;

\draw (236.53,83.2) node [anchor=north west][inner sep=0.75pt]    {$\Sigma _{11}$};
\draw (180.27,84.2) node [anchor=north west][inner sep=0.75pt]    {$\Sigma _{12}$};
\draw (178.53,177.13) node [anchor=north west][inner sep=0.75pt]    {$\Sigma _{13}$};
\draw (232.6,175.33) node [anchor=north west][inner sep=0.75pt]    {$\Sigma _{14}$};
\draw (452.07,78.87) node [anchor=north west][inner sep=0.75pt]    {$\Sigma _{22}$};
\draw (398.73,79.53) node [anchor=north west][inner sep=0.75pt]    {$\Sigma _{21}$};
\draw (406.2,175.8) node [anchor=north west][inner sep=0.75pt]    {$\Sigma _{24}$};
\draw (450.27,175.27) node [anchor=north west][inner sep=0.75pt]    {$\Sigma _{23}$};
\draw (211.33,147.73) node [anchor=north west][inner sep=0.75pt]    {$k_{1}$};
\draw (430,149.73) node [anchor=north west][inner sep=0.75pt]    {$k_{2}$};
\draw (262.43,118.43) node [anchor=north west][inner sep=0.75pt]    {$\Omega _{11}$};
\draw (165.73,120.53) node [anchor=north west][inner sep=0.75pt]    {$\Omega _{12}$};
\draw (163.73,146.93) node [anchor=north west][inner sep=0.75pt]    {$\Omega _{13}$};
\draw (260.87,146.53) node [anchor=north west][inner sep=0.75pt]    {$\Omega _{14}$};
\draw (463.87,122.13) node [anchor=north west][inner sep=0.75pt]    {$\Omega _{22}$};
\draw (371.27,120.33) node [anchor=north west][inner sep=0.75pt]    {$\Omega _{21}$};
\draw (371.57,145.43) node [anchor=north west][inner sep=0.75pt]    {$\Omega _{24}$};
\draw (464,146.93) node [anchor=north west][inner sep=0.75pt]    {$\Omega _{23}$};
\draw (583.33,147.07) node [anchor=north west][inner sep=0.75pt]    {$\mathrm{Re} k$};
\draw (335.67,4.4) node [anchor=north west][inner sep=0.75pt]    {$\mathrm{Im} k$};
\draw (333.43,63.8) node [anchor=north west][inner sep=0.75pt]    {$\Sigma _{01}$};
\draw (303.93,62.6) node [anchor=north west][inner sep=0.75pt]    {$\Sigma _{02}$};
\draw (305.13,203.8) node [anchor=north west][inner sep=0.75pt]    {$\Sigma _{03}$};
\draw (337.53,202.6) node [anchor=north west][inner sep=0.75pt]    {$\Sigma _{04}$};
\draw (317.63,147.2) node [anchor=north west][inner sep=0.75pt]    {$0$};
\draw (304,29.4) node [anchor=north west][inner sep=0.75pt]  [font=\Large]  {$+$};
\draw (160,205.4) node [anchor=north west][inner sep=0.75pt]  [font=\Large]  {$+$};
\draw (464,207.4) node [anchor=north west][inner sep=0.75pt]  [font=\Large]  {$+$};
\draw (338,218.4) node [anchor=north west][inner sep=0.75pt]  [font=\Large]  {$-$};
\draw (462,40.4) node [anchor=north west][inner sep=0.75pt]  [font=\Large]  {$-$};
\draw (171,45.4) node [anchor=north west][inner sep=0.75pt]  [font=\Large]  {$-$};
\end{tikzpicture}
\caption{The jump lines $\Sigma_{jl}$ and regions $\Omega_{jl}$ for $\xi<-\epsilon$.}
\label{lin1}
\end{figure}
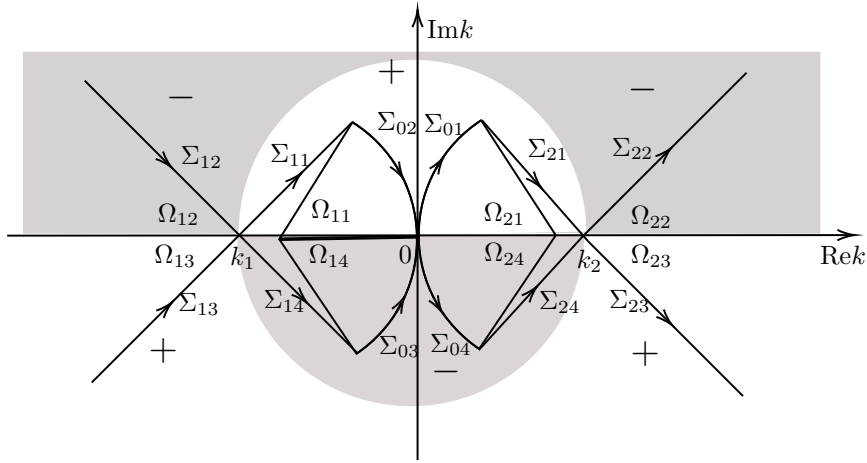
Denote arcs $\Sigma_{0l}$, lines $\Sigma_{jl},$ and domains $\Omega_{jl},\ j=1,2,\ l=1,2,3,4$ as shown in Figure \ref{lin1}. Let
\begin{equation*}
\Sigma_j=\bigcup_{l=1}^4\Sigma_{jl},\ \Sigma=\bigcup_{j=0}^2\Sigma_j,\ \Omega_j=\bigcup_{l=1}^4\Omega_{jl},\ \Omega=\bigcup_{j=1}^2\Omega_j.
\end{equation*}
Now we introduce a new function
\begin{equation}
R^{(2)}(k):=\left\{\begin{array}{ll}
\left(\begin{array}{cc}1&0\\R_{jl}e^{2it\theta}&1\end{array}
\right),&k \in \Omega_{jl},\ j=1,2,\ l=1,3,\\
\left(\begin{array}{cc}1&R_{jl}e^{-2it\theta}\\0&1\end{array}
\right),&k \in \Omega_{jl},\ j=1,2,\ l=2,4,\\
\mathrm{I},&elsewhere,\end{array}\right.
\end{equation}
where $R_{jl}\equiv R_{jl}(k),\ j=1,2,\ l=1,2,3,4$ are functions in $\overline{\Omega}_{jl}$ admitting the properties in the following Proposition:
\begin{proposition}\label{Rjl}
The functions $R_{jl}:\overline{\Omega}_{jl}\to\mathbb{C},\ j=1,2,\ l=1,2,3,4$ have boundary values as follows:
\begin{eqnarray}
&&R_{j1}=\left\{\begin{array}{lr}
-r(k_j)(k-k_j)^{-2i\nu(k_j)}e^{-2i\beta_j(k_j)},\ k\in \Sigma_{j1},\\
-r(k)\delta_+^{-2},\quad k\in \Gamma_{j1},\end{array}\right.\\
&&R_{j2}=\left\{\begin{array}{lr}
\frac{\bar{r}(k_j)}{1-|r(k_j)|^2}(k-k_j)^{2i\nu(k_j)}e^{2i\beta_j(k_j)},\ k\in \Sigma_{j2},\\
\frac{\bar{r}(k)}{1-|r(k)|^2}\delta_+^{2},\quad k\in \Gamma_{j2},\end{array}\right.\\
&&R_{j3}=\left\{\begin{array}{lr}
\frac{r(k_j)}{1-|r(k_j)|^2}(k-k_j)^{ -2i\nu(k_j)}e^{-2i\beta_j(k_j)},\ k\in \Sigma_{j3},\\
\frac{r(k)}{1-|r(k)|^2}\delta_-^{-2},\quad k\in \Gamma_{j2},\end{array}\right.\\
&&R_{j4}=\left\{\begin{array}{lr}
-\bar{r}(k_j)(k-k_j)^{ 2i\nu(k_j)}e^{2i\beta_j(k_j)},\ k\in \Sigma_{j4},\\
-\bar{r}(k)\delta_-^{2},\quad k\in \Gamma_{j1},\end{array}\right.
\end{eqnarray}
where $\Sigma_{jl}$ are shown in Figure \ref{lin1} and $\Gamma_{ij}$ are intervals with
\[
\Gamma_{11}=(k_1,0),\ \Gamma_{21}=(0,k_2), \
\Gamma_{12}=(-\infty,k_1),\ \Gamma_{22}=(k_2,\infty).
\]
For all $j=1,2$, $R_{jl}$ is bounded as $k\in\Omega_{jl}$ and satisfies the estimates:
\begin{eqnarray}
&&|\bar{\partial}R_{jl}|\lesssim \varphi(\mathrm{Re}k)+|k-k_j|^{-1/2}+|r'(\mathrm{Re}k)|,\  for\ all\ k\in\Omega_{jl},\ l=1,4;\quad\qquad \label{pro31}\\
&&|\bar{\partial}R_{jl}|\lesssim |k-k_j|^{-1/2}+|r'(\mathrm{Re}k)|,\quad for\ all\ k\in\Omega_{jl},\ l=2,3;\label{pro32}\\
&&|\bar{\partial}R_{jl}|\lesssim |k|^2,\quad for\ all\ k\in\Omega_{jl},\ l=1,4,\ as\ k\to 0,\label{pro33}
\end{eqnarray}
where $\varphi \in C^{\infty}_0(\mathbb{R},[0,1])$ is a fixed cut-off function with support near 0.
\end{proposition}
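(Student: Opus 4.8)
The plan is to construct each $R_{jl}$ as an explicit interpolation between its two prescribed boundary data — the value on the ray $\Sigma_{jl}$ and the value on the real segment $\Gamma_{jl}$ — and then to read off $\bar\partial R_{jl}$ by direct differentiation. The symmetries $k\mapsto-k$ and $k\mapsto\bar k$ relate the eight functions, so it suffices to treat one representative, say $R_{j1}$, the rest following by reflection. Introduce polar coordinates $k=k_j+\rho e^{i\psi}$ at the stationary point and recall from $\delta(k)=(k-k_j)^{i\nu(k_j)}e^{i\beta_j(k)}$ that the two boundary data of $R_{j1}$ share the common analytic, bounded prefactor $(k-k_j)^{-2i\nu(k_j)}$ and differ only through $-r(\cdot)e^{-2i\beta_j(\cdot)}$ evaluated at $\mathrm{Re}\,k$ versus at $k_j$. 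Taking the ray $\Sigma_{j1}$ at angle $\psi=\pi/4$, I would set
\[
R_{j1}(k)=(k-k_j)^{-2i\nu(k_j)}\Big[\,h_{j1}(\mathrm{Re}\,k)\cos 2\psi-r(k_j)e^{-2i\beta_j(k_j)}\big(1-\cos 2\psi\big)\Big],\quad h_{j1}=-r\,e^{-2i\beta_j},
\]
multiplied by a fixed smooth cut-off localizing the construction to a neighbourhood of $\Sigma_{j1}\cup\Gamma_{j1}$. At $\psi=0$ this reproduces $-r(k)\delta_+^{-2}$ on $\Gamma_{j1}$, and at $\psi=\pi/4$ it reproduces $-r(k_j)(k-k_j)^{-2i\nu(k_j)}e^{-2i\beta_j(k_j)}$ on $\Sigma_{j1}$, as required; boundedness is clear since $\nu(k_j)$ is real and $r,\beta_j,(k-k_j)^{\pm2i\nu}$ are all bounded on $\overline{\Omega}_{j1}$. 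The cases $l=2,3,4$ are identical after inserting the factor $(1-|r|^2)^{-1}$, which is smooth and bounded because $1-|r|^2>c_0>0$.

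For the generic estimates I would apply $\bar\partial=\tfrac12 e^{i\psi}\big(\partial_\rho+\tfrac{i}{\rho}\partial_\psi\big)$. The analytic prefactor has vanishing $\bar\partial$ and drops out. The radial derivative $\partial_\rho$ hitting the smooth datum $r(\mathrm{Re}\,k)$ produces the $|r'(\mathrm{Re}\,k)|$ contribution, while the derivative of the localizing cut-off produces the bounded term $\varphi(\mathrm{Re}\,k)$; the latter is present only for $l=1,4$, the wedges that reach toward the origin. The angular factor $\tfrac{1}{\rho}\partial_\psi$ acts on the difference between the two interpolated data, whose $\beta_j$-part is $O(|k-k_j|^{1/2})$ by the H\"older bound \eqref{bj}; dividing by $\rho=|k-k_j|$ yields exactly the $|k-k_j|^{-1/2}$ term. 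Collecting these gives \eqref{pro31} for $l=1,4$ and \eqref{pro32} for $l=2,3$.

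The essential difficulty — and the whole reason for the special construction advertised in the introduction — is the refined bound \eqref{pro33} near $k=0$, which is what ultimately cancels the $k^{-3}$ singularity in the reconstruction. Here I exploit that $0$ lies in the interior of $(k_1,k_2)$, where $\delta$ has no jump and is in fact analytic with $\delta(0)=1$, while $r\in C^3(\mathbb{R})$ by Remark \ref{rmk1}; hence the real-axis datum $p(k):=-r(k)\delta(k)^{-2}$ on $\Gamma_{j1}$ is $C^3$ in a fixed disk about the origin. Inside that disk I would discard the $k_j$-interpolation of the first paragraph and instead define $R_{j1}$ to be the degree-two Taylor polynomial $P_2(k)$ of $p$ at $k=0$ plus the flat (constant-in-$\mathrm{Im}\,k$) extension of the remainder $p-P_2$. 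Since $P_2$ is analytic it contributes nothing to $\bar\partial R_{j1}$, while $p-P_2=O(|k|^3)$ on $\mathbb{R}$ forces $\partial_{\mathrm{Re}}(p-P_2)=O(|k|^2)$, so its flat extension has $\bar\partial=O(|k|^2)$; this is precisely \eqref{pro33}, and it is exactly here that the $C^3$ regularity — hence the hypothesis $u_0\in H^{3,4}$ — is consumed. The boundary value on $\Gamma_{j1}$ is untouched because the flat extension restricts to $p$ on the real axis, and the gluing to the $k_j$-construction is carried out in an annulus $\{\epsilon_1\le|k|\le\epsilon_2\}$ bounded away from both $0$ and $k_j$, so that the $\varphi$-supported transition affects neither \eqref{pro33} (valid in $|k|<\epsilon_1$) nor the near-$k_j$ estimates.

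The main obstacle is thus not any single estimate but the simultaneous reconciliation of three demands on one function: exact matching of the two non-smooth boundary data, the coarse $\bar\partial$-control \eqref{pro31}--\eqref{pro32} near the stationary point, and the two-order gain \eqref{pro33} near the origin. The Taylor-plus-flat-extension device localized by $\varphi$ is what makes these compatible, and verifying that the three pieces overlap consistently is the real content of the proposition.
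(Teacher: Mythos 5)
Your proposal is correct in substance and follows the paper's overall strategy: an explicit trigonometric interpolation, in polar coordinates centered at $k_j$, between the two prescribed boundary data, a cut-off isolating a special construction near the origin, and estimates resting on the H\"older bound \eqref{bj}, the $1/2$-H\"older continuity of $r$, and the $C^3$ regularity from Remark \ref{rmk1}. The genuinely different element is the device that produces \eqref{pro33}. The paper keeps one global formula $R_{11}=R_{11,1}+R_{11,2}$, where $R_{11,2}$ is a trigonometric second-order almost-analytic extension of $\tilde f(\mathrm{Re}\,k)=-\chi_0(\mathrm{Re}\,k)r(\mathrm{Re}\,k)\delta^{-2}$, with correction terms proportional to $\rho e^{-i\alpha}\sin(2\alpha)\cos(2\alpha)\tilde f'$ and $\rho^{2}e^{-2i\alpha}\sin^{2}(2\alpha)\tilde f''$ arranged so that $\bar\partial R_{11,2}=O(\sin^{2}(2\alpha))=O(|k|^{2})$ near $k=0$ (this is \eqref{r1-4}); you instead discard the interpolant inside a small disk and use the analytic Taylor polynomial $P_2$ of $p=-r\delta^{-2}$ plus the flat extension of the $C^3$ remainder, glued in an annulus. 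Both extract the two-order gain from the same $C^3$ input, but your version buys a real advantage: in the paper's formula the term $\tilde g_{11}\sin^{2}(2\alpha)$ of $R_{11,1}$ is still present near the origin, and its $\bar\partial$-derivative, $2i\rho^{-1}e^{i\alpha}\tilde g_{11}\sin(2\alpha)\cos(2\alpha)$, is only $O(|\mathrm{Im}\,k|)$ there, not $O(|k|^{2})$; the paper's proof of \eqref{pro33} estimates only $\bar\partial R_{11,2}$ near zero and passes over this term, so strictly speaking the paper's construction also needs the $\tilde g_{11}$ piece to be cut off near the origin. Your disk replacement removes that term altogether, so \eqref{pro33} comes out clean.

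There is, however, one genuine gap in your term accounting for \eqref{pro31}--\eqref{pro32}. Writing the real-axis datum as $(k-k_j)^{-2i\nu(k_j)}h_{j1}(\mathrm{Re}\,k)$ with $h_{j1}=-r\,e^{-2i\beta_j}$ means that the radial derivative produces not only $r'(\mathrm{Re}\,k)$ but also $2i\,r(\mathrm{Re}\,k)\beta_j'(\mathrm{Re}\,k)e^{-2i\beta_j(\mathrm{Re}\,k)}$, and $\beta_j'$ is not bounded by $|r'|$: one needs $|\beta_j'(u)|\lesssim|u-k_j|^{-1/2}$, which is true (the $\nu(k_j)(u-k_j)^{-1}$ singularities of the two terms of $\beta_j'$ cancel, leaving an integral bounded by $\int_{\Gamma_1}|\nu(s)-\nu(k_j)|\,|s-u|^{-2}\mathrm{d}s\lesssim|u-k_j|^{-1/2}$), but this requires a separate argument; \eqref{bj}, which controls $\beta_j$ itself and not its derivative, does not give it. The paper's organization avoids the issue entirely: in its computation \eqref{dbar11} no derivative of $\delta$ appears, i.e.\ the factor $\delta_+^{-2}$ is effectively the sectorally analytic $\delta(k)^{-2}=(k-k_j)^{-2i\nu(k_j)}e^{-2i\beta_j(k)}$, invisible to $\bar\partial$, so only $r'$ and cut-off derivatives survive. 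Either repair works for you: prove the $\beta_j'$ bound above, or evaluate $e^{-2i\beta_j}$ at the complex point $k$ rather than at $\mathrm{Re}\,k$. Finally, make explicit the geometric fact your gluing (and the paper's construction) tacitly uses: the outer radius $\epsilon_2$ of the annulus must satisfy $\epsilon_2<\rho_0/\sqrt{2}=\mathrm{dist}(0,\Sigma_{j1})$, so that the near-origin construction never meets $\Sigma_{j1}$ and the prescribed boundary value there is untouched.
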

\begin{proof}
Since the functions $R_{jl},\ j=1,2,\ l=1,4$ have the same construction, we take $R_{11}$ for example.

Denote $k=k_1+\rho e^{i\alpha}$, for $k\in\Omega_{11}$, $\rho=|k-k_1|\in[0,\rho_0]$, $\alpha\in[0,\pi/4]$. Under the $(\rho,\alpha)$-coordinate, the $\bar{\partial}$-derivative has the following representation
\begin{equation}\label{dbar}
\bar{\partial}=\frac{1}{2}e^{i\alpha}(\partial_{\rho}+i\rho^{-1}\partial_{\alpha}).
\end{equation}
To construct the function $R_{11}$, we introduce a cut-off function $\chi_0(x)\in\ C^{\infty}_0([0,1]),$
\begin{equation}
\chi_0(x)=\left\{\begin{array}{ll}
1,&|x|\leqslant \min\{1,\rho_0\}/8,\\
0,&|x|\geqslant \min\{1,\rho_0\}/4.
\end{array}\right.
\end{equation}
Define the function $R_{11}$ as
\begin{equation*}
R_{11}=R_{11,1}+R_{11,2},
\end{equation*}
where
\begin{equation}
\begin{aligned}
R_{11,1}=&-(1-\chi_0(\mathrm{Re}k))r(\mathrm{Re}k)\delta_+^{-2}cos^2(2\alpha)+\tilde{g}_{11}(1-cos^2(2\alpha)),\\
R_{11,2}=&\tilde{f}(\mathrm{Re}k)cos^2(2\alpha)+\frac{i}{2}\rho e^{-i\alpha}sin(2\alpha)cos(2\alpha)\chi_0(\alpha)\tilde{f}'(\mathrm{Re}k)+sin^2(2\alpha)\chi_0(\alpha)\tilde{f}(\mathrm{Re}k)\\
&-\frac{1}{4}\rho e^{-i\alpha}sin^2(2\alpha)\tilde{f}'(\mathrm{Re}k)-\frac{1}{8}\rho^2 e^{-2i\alpha}sin^2(2\alpha)\tilde{f}''(\mathrm{Re}k),
\end{aligned}
\end{equation}
and
\begin{eqnarray}
&&\tilde{g}_{11}=-r(k_1)(k-k_1)^{-2i\nu(k_1)}e^{-2i\beta_1(k_1)},\nonumber\\
&&\tilde{f}(\mathrm{Re}k)=-\chi_0(\mathrm{Re}k)r(\mathrm{Re}k)\delta_+^{-2}.\nonumber
\end{eqnarray}
Here the function $R_{11,2}$ is built to implement the estimate near $k=0$.

We first deal with $R_{11,1}$. By \eqref{dbar}, we have
\begin{equation}\label{dbar11}
\begin{aligned}
\bar{\partial}R_{11,1}=&\frac{1}{2}\chi_0'(\mathrm{Re}k)r(\mathrm{Re}k)\delta_+^{-2}cos(2\alpha)-\frac{1}{2}[1-\chi_0(\mathrm{Re}k)]r'(\mathrm{Re}k)\delta_+^{-2}cos(2\alpha)\\
&+2i\rho^{-1}e^{i\alpha}[1-\chi_0(\mathrm{Re}k)]r(\mathrm{Re}k)\delta_+^{-2}sin(2\alpha)cos(2\alpha)+2i\rho^{-1}e^{i\alpha}\tilde{g}_{11}sin(2\alpha)cos(2\alpha),
\end{aligned}
\end{equation}
where $\delta^{-2}$, $\rho^{-1},\ r(\mathrm{Re}k)$ and $ r'(\mathrm{Re}k)$ are all bounded in the support of $\chi_0(\mathrm{Re}k)$, thus \eqref{dbar11} is estimated as
\begin{equation}\label{r1-1}
|\bar{\partial}R_{11,1}|\lesssim\varphi(\mathrm{Re}k)+|r'(\mathrm{Re}k)|+\rho^{-1}|\tilde{g}_{11}+r(\mathrm{Re}k)\delta_+^{-2}|.
\end{equation}
The last item of the right is rewritten as
\begin{equation*}
\rho^{-1}|[r(\mathrm{Re}k)-r(k_1)]\delta_+^{-2}+r(k_1)(k-k_1)^{-2i\nu(k_1)}e^{-2i\beta_1(k_1)}[e^{-2i(\beta_1(k)-\beta_1(k_1))}-1]|
\end{equation*}
Based on the estimate $|r(\mathrm{Re}k)-r(k_1)|\lesssim|k-k_1|^{1/2}$ and \eqref{bj}, we finally derive that
\begin{equation}\label{r1-2}
\rho^{-1}|\tilde{g}_{11}+r(\mathrm{Re}k)\delta_+^{-2}|\lesssim|k-k_1|^{-1/2}.
\end{equation}
For $R_{11,2}$, we have
\begin{equation}\label{dbar12}
\begin{aligned}
&\bar{\partial}R_{11,2}=\\
&\left\{\frac{1}{2}\tilde{f}'(\mathrm{Re}k)cos^2(2\alpha)+\frac{i}{4}sin(2\alpha)cos(2\alpha)[8\rho^{-1}e^{i\alpha}\tilde{f}(\mathrm{Re}k)+2\tilde{f}'(\mathrm{Re}k)+\rho e^{-i\alpha}\tilde{f}''(\mathrm{Re}k)]\right\}[\chi_0(\alpha)-1]\\
&+\left\{\frac{i}{2}\rho^{-1}e^{i\alpha}\chi_0'(\alpha)\tilde{f}(\mathrm{Re}k)+[\chi_0(\alpha)-\frac{1}{4}]\tilde{f}'(\mathrm{Re}k)-\frac{3}{8}\rho e^{-i\alpha}\tilde{f}''(\mathrm{Re}k)-\frac{1}{16}\rho^2e^{-2i\alpha}\tilde{f}'''(\mathrm{Re}k)\right\}sin^2(\alpha)\\
&-\frac{1}{4}\rho e^{-i\alpha}sin(2\alpha)cos(2\alpha)\chi_0'(\alpha)\tilde{f}'(\mathrm{Re}k)
\end{aligned}
\end{equation}
Obviously, each item of the right of \eqref{dbar12} is bounded in the support of $\chi_0(\mathrm{Re}k)$, so
\begin{equation}\label{r1-3}
|\bar{\partial}R_{11,2}|\lesssim\varphi(\mathrm{Re}k).
\end{equation}
As $k\to 0$, we have $\alpha\to 0$ and within a small neighborhood of $0$, $\chi_0(\alpha)\equiv1,\ \chi_0'(\alpha)\equiv0$,
thus
\begin{equation}\label{r1-4}
\begin{aligned}
|\bar{\partial}R_{11,2}|&\lesssim|\tilde{f}(\mathrm{Re}k)+\tilde{f}'(\mathrm{Re}k)
+\tilde{f}''(\mathrm{Re}k)+\tilde{f}'''(\mathrm{Re}k)|sin^2(2\alpha)
+|\tilde{f}'(\mathrm{Re}k)\chi_0'(\alpha)sin(2\alpha)|\\
&\lesssim|k|^2,
\end{aligned}
\end{equation}
the last inequality is deduced by Remark \ref{rmk1}, which shows that $r(k),\ r'(k),\ r''(k)$ and $r'''(k)$ are all bounded as $k \to 0$.
Together \eqref{r1-1}, \eqref{r1-2}, \eqref{r1-3} with \eqref{r1-4}, we obtain \eqref{pro31} and \eqref{pro33}.

Meanwhile, $R_{jl},\ j=1,2,\ l=2,3$ are constructed similarly as $R_{11,1}$ without the cut-off function. Taking $R_{12}$ for example:
\begin{equation*}
R_{12}=\frac{\bar{r}(\mathrm{Re}k)}{1-|r(\mathrm{Re}k)|^2}\delta_+^{2}cos(2\alpha)
+\tilde{g}_{12}(1-cos(2\alpha)),
\end{equation*}
where
\begin{equation*}
\tilde{g}_{12}=\frac{\bar{r}(k_1)}{1-|r(k_1)|^2}(k-k_1)^{2i\nu(k_1)}e^{2i\beta_1(k_1)}.
\end{equation*}
Therefore, \eqref{pro32} is proved by the same method used in the estimate of $R_{11,1}$.
\end{proof}

We now use $R^{(2)}(k)$ to define a new unknown function:
\begin{equation}\label{trs2}
M^{(2)}(k)\equiv M^{(2)}(y,t,k)=M^{(1)}(k)R^{(2)}(k).
\end{equation}
Then $M^{(2)}(k)$ satisfies the following mixed $\bar{\partial}$-Riemann-Hilbert problem.
\begin{rhp}\label{RHdbar}
Find a $2\times 2$ matrix-valued function $M^{(2)}(k)$ admitting properties as follows:
\begin{itemize}
  \item Analyticity: $M^{(2)}(k)$ is continuous in $\mathbb{C}\setminus\Sigma;$
  \item Jump condition: $M^{(2)}(k)$ has continuous boundary values $M^{(2)}_{\pm}(k)$ on $\Sigma$ and
  \begin{equation}
  M^{(2)}_+(k)=M^{(2)}_-(k)V^{(2)}(k),
  \end{equation}
  where
  \begin{equation}
  V^{(2)}(k)=\left\{\begin{array}{ll}
  (R^{(2)}(k))^{-1},&k\in\Sigma_{jl},\ j=0,1,2,\ l=1,2,\\
  R^{(2)}(k),&k\in\Sigma_{jl},\ j=0,1,2,\ l=3,4;\end{array}
  \right.
  \end{equation}
  \item Asymptotic behavior:
  \begin{equation}
  M^{(2)}(k)=I+\mathcal{O}(k^{-1}),\ as\ k\to\infty;
  \end{equation}
  \item $\bar{\partial}$-Derivative: For $k\in\mathbb{C}$, we have
  \begin{equation}
  \bar{\partial}M^{(2)}(k)=M^{(2)}(k)\bar{\partial}R^{(2)}(k),
  \end{equation}
  where
  \begin{equation}
 \bar{\partial}R^{(2)}(k)=\left\{
  \begin{array}{ll}
  \left(\begin{array}{cc}
  0&0\\ \bar{\partial}R_{jl}e^{2it\theta}&0\end{array}\right),&k\in\Omega_{jl},\ j=1,2,\ l=1,3,\\
  \left(\begin{array}{cc}
  0&\bar{\partial}R_{jl}e^{-2it\theta}\\ 0&0\end{array}\right),&k\in\Omega_{jl},\ j=1,2,\ l=2,4,\\
  0,&elsewhere.
  \end{array}\right.
  \end{equation}
\end{itemize}
\end{rhp}
Denote
\begin{equation*} U_j=\left\{k\| k-k_j|\leqslant\frac{1}{4}\rho_0\right\},\ j=1,2.
\end{equation*}
and $U=U_1\cup U_2.$ The jump matrix $V^{(2)}(k)$ has the following estimates.
\begin{proposition}\label{propp4}
As $t\to\infty$, for $1\leq p_1\leq +\infty$, there exists a positive constant $c_1(p)$ relied on $p$ such that
\begin{equation}
\| V^{(2)}(k)-I\|_{L^{p}(\Sigma_{jl}\setminus U_j)}=\mathcal{O}(e^{-c_1(p_1)t}),
\end{equation}
for $j=1,2$ and $l=1,2,3,4$. And there exists another positive constant $c_2(p)$ relied on $p$ such that
\begin{equation}\label{l0}
\| V^{(2)}(k)-I\|_{L^{p}(\Sigma_{0l})}=\mathcal{O}(e^{-c_2(p)t}),\ l=1,2,3,4.
\end{equation}
\end{proposition}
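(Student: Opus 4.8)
The plan is to exploit the fact that on every component of the contour $\Sigma$ the matrix $V^{(2)}(k)-I$ reduces to a single off-diagonal entry carrying an oscillatory exponential, so the whole estimate is governed by the decay of $e^{\pm 2it\theta}$. Reading off $R^{(2)}(k)$ and its inverse, on $\Sigma_{jl}$ one has $V^{(2)}(k)-I$ equal to $\mp R_{jl}e^{2it\theta}$ in the lower-left slot when $l=1,3$ and to $R_{jl}e^{-2it\theta}$ in the upper-right slot when $l=2,4$. By Proposition \ref{Rjl} each $R_{jl}$ is bounded on $\overline{\Omega}_{jl}$, and its boundary value on $\Sigma_{jl}$ is $-r(k_j)(k-k_j)^{\mp 2i\nu(k_j)}e^{\mp 2i\beta_j(k_j)}$ (or the analogous expression), whose modulus is $|r(k_j)|$ times the bounded factor $e^{\pm 2\nu(k_j)\arg(k-k_j)}$; hence $|V^{(2)}(k)-I|\lesssim |e^{\pm 2it\theta}|=e^{\mp 2t\,\mathrm{Im}\,\theta}$ uniformly along each ray. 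The entire proposition therefore follows from sign-definite lower bounds on $\mathrm{Im}\,\theta$.

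The core step is to control $\mathrm{Im}\,\theta$ on each piece. Here I would start from the exact identity \eqref{Imthta}, $\mathrm{Im}\,\theta=\mathrm{Im}\,k\,(\xi+\tfrac{1}{2|k|^2})=\tfrac{\mathrm{Im}\,k}{2}\big(\tfrac{1}{|k|^2}-\tfrac{1}{\rho_0^2}\big)$, together with the local expansion $\theta(k)=\theta(k_j)+\tfrac12\theta''(k_j)(k-k_j)^2+\cdots$ with $\theta''(k_j)=-k_j^{-3}$. Writing $k-k_j=\rho e^{i\alpha}$ this gives $\mathrm{Im}\,\theta\approx \tfrac{1}{2\rho_0^3}\rho^2\sin 2\alpha$ near $k_1$ and the reflected statement near $k_2$, so on the ray $\Sigma_{jl}$ (whose opening angle is chosen in Figure \ref{lin1} precisely so that $\sin 2\alpha$ has the correct sign) one gets $\pm\mathrm{Im}\,\theta\gtrsim \rho_0^{-3}|k-k_j|^2$, the sign matching the exponential attached to that ray. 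Since $U_j=\{|k-k_j|\le\tfrac14\rho_0\}$ is excised, on $\Sigma_{jl}\setminus U_j$ this yields $\pm\mathrm{Im}\,\theta\gtrsim \rho_0^{-1}$ at the inner endpoint, and the exact identity shows $|\mathrm{Im}\,\theta|$ keeps growing (at least linearly in $|k|$) along the unbounded rays; for $\xi\le-\epsilon$ the quantity $\rho_0=\sqrt{-1/(2\xi)}$ is bounded above, so $\rho_0^{-1}$ and hence the decay rate is bounded below uniformly in the region.

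With these bounds in hand the $L^p$ estimate is routine: on the bounded portions of $\Sigma_{jl}\setminus U_j$ one has $\|V^{(2)}-I\|_{L^p}\le(\mathrm{length})^{1/p}\|V^{(2)}-I\|_{L^\infty}\lesssim e^{-c_1(p)t}$, while on the unbounded rays the bound $|\mathrm{Im}\,\theta|\gtrsim \rho$ for large $\rho$ makes $\int e^{-c p t\,\mathrm{Im}\,\theta}\,|dk|$ a convergent exponential integral that is again $\mathcal{O}(e^{-c_1(p)t})$; combining the two covers all $1\le p\le\infty$. For the arcs $\Sigma_{0l}$ I would use that they are compact and, by construction, lie strictly inside the sign-definite region, bounded away from the circle $|k|=\rho_0$ where $\mathrm{Im}\,\theta$ vanishes; near $k=0$ the singular term gives $\mathrm{Im}\,\theta\sim \tfrac{\mathrm{Im}\,k}{2|k|^2}$, which is large of the correct sign, so $\pm\mathrm{Im}\,\theta\ge c>0$ on $\Sigma_{0l}$ and $\|V^{(2)}-I\|_{L^p(\Sigma_{0l})}\lesssim e^{-c_2(p)t}$, giving \eqref{l0}.

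I expect the only genuine difficulty to be the sign-and-magnitude bookkeeping for $\mathrm{Im}\,\theta$, namely verifying simultaneously that (i) the quadratic lower bound near each $k_j$ carries the sign demanded by the exponential on that ray, (ii) this bound persists all the way out to where the ray meets the arc without $\mathrm{Im}\,\theta$ changing sign — which is exactly why the rays must terminate before reaching the circle $|k|=\rho_0$ and the imaginary axis, explaining the role of the arcs $\Sigma_{0l}$ — and (iii) the origin, where $\theta$ is singular, actually helps rather than hurts because $\mathrm{Im}\,\theta$ blows up there with the favorable sign. Everything else is a standard length-times-sup or exponential-integral estimate.
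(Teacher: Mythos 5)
Your proposal is correct and follows essentially the same route as the paper's own proof: bound the prefactor $R_{jl}$ via Proposition \ref{Rjl}, get a sign-definite lower bound on $|\mathrm{Im}\,\theta|$ along $\Sigma_{jl}\setminus U_j$ and on the arcs $\Sigma_{0l}$ from the identity \eqref{Imthta}, and integrate the resulting exponential to cover all $1\le p\le\infty$. The paper's version is terser --- it simply parametrizes a ray by $k=k_1+le^{i\pi/4}$ and bounds the $L^p$ norm by $\int_{\rho_0/4}^{\infty}e^{-pc'tl}\,\mathrm{d}l$ --- whereas your bookkeeping (quadratic behavior of $\mathrm{Im}\,\theta$ near $k_j$, linear growth along the unbounded rays, termination of the bounded rays before the circle $|k|=\rho_0$, and the favorable blow-up of $\mathrm{Im}\,\theta$ at the origin) just spells out the geometric facts the paper leaves implicit.
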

\begin{proof}
For $\xi<0$, by \eqref{Imthta}, we find that
\begin{equation*}
\mathrm{Im}\theta=\mathrm{Im}k\frac{2|k|^2\xi+1}{2|k|^2},
\end{equation*}
and $\frac{2|k|^2\xi+1}{2|k|^2}$ has nonzero boundary on $\Sigma_{jl}\setminus U_j$. We take the case as $k\in\Sigma_{11}\setminus U_1$ for example: denote $k=k_1+l e^{\frac{i\pi}{4}},\ l\in(\frac{\rho_0}{4},+\infty)$, then for $1\leq p<+\infty,$
\begin{equation*}
\begin{aligned}
\| V^{(2)}(k)-I\| ^p_{L^p(\Sigma_{11}\setminus U_1)}&=\| -\tilde{g}_{11}e^{2it\theta}\| ^p_{L^p(\Sigma_{11}\setminus U_1)}\lesssim\| e^{2it\theta}\| ^p_{L^p(\Sigma_{11}\setminus U_1)}\\
&\lesssim\int_{\frac{\rho_0}{4}}^{+\infty}e^{-pc'tl}\mathrm{d}l\lesssim t^{-1}e^{-pc't\rho_0/4}.
\end{aligned}
\end{equation*}
For $p=+\infty$, the estimate is obvious.

As for $k\in \Sigma_{01},$ $|V^{(2)}(k)-I|=|R_{11}e^{2it\theta}|$, so \eqref{l0} is deduced by the same way.
\end{proof}
\
To solve RH problem \ref{RHdbar}, we decompose $M^{(2)}(k)$ into a pure RH problem $M^R(k)\equiv M^R(y,t,k)$ with $\bar{\partial}M^R(k)\equiv 0$ and a pure $\bar{\partial}$-problem with nonzero $\bar{\partial}$-derivatives.

We first give a RH problem for the $M^R(k)$:
\begin{rhp}Find a $2\times 2$ matrix-valued function $M^R(k)$ with the following properties:
\begin{itemize}
  \item Analyticity: $M^R(k)$ is analytic in $\mathbb{C}\setminus \Sigma;$
  \item Jump condition: $M^R(k)$ has continuous boundry values $M^R_{\pm}(k)$ on $\Sigma$ and
      \begin{equation}
      M^R_+(k)=M^R_-(k)V^{(2)}(k),\quad k\in\Sigma;
      \end{equation}
  \item Asymptotic behavior:
 $
  M^R(k)=I+\mathcal{O}(k^{-1}),\ as\ k\to\infty;
$
  \item $\bar{\partial}$-Derivative: $\bar{\partial}R^{(2)}=0,$ for all $k\in\mathbb{C}$.
\end{itemize}
\end{rhp}
Next, We use $M^R(k)$ to define a new matrix function
\begin{equation}\label{trs3}
M^{(3)}(k)\equiv M^{(3)}(y,t,k)=M^{(2)}(k)(M^R(k))^{-1},
\end{equation}
which is a pure $\bar{\partial}$-problem by removing $M^R(k)$ from $M^{(2)}(k)$.
\begin{dbar}\label{M3}
 Find a $2\times2$ matrix-valued function $M^{(3)}(k)$ satisfying the following properties:
\begin{itemize}
  \item Continuity: $M^{(3)}(k)$ is continuous in $\mathbb{C};$
  \item Asymptotic behavior:
$
  M^{(3)}(k)\thicksim I+\mathcal{O}(k^{-1}),\quad k\to \infty;
$
  \item $\bar{\partial}$-Derivative: For $k\in \mathbb{C}$, we have
      \begin{equation}
      \bar{\partial}M^{(3)}(k)=M^{(3)}(k)W(k),
      \end{equation}
      where
      \begin{equation}
      W(k)=M^R(k)\bar{\partial}R^{(2)}(k)(M^R(k))^{-1}.
      \end{equation}
\end{itemize}
\end{dbar}

\subsection{Analysis on pure RH problem}\label{3.3}

\subsubsection{A local solvable RH model near stationary points}
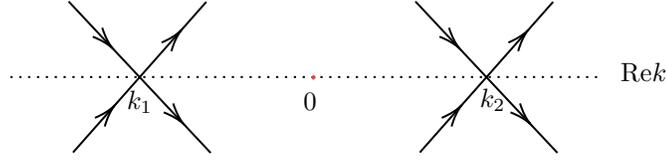
\begin{figure}
\tikzset{every picture/.style={line width=0.75pt}} 
\begin{tikzpicture}[x=0.75pt,y=0.75pt,yscale=-1,xscale=1]
uncomment if require: \path (40,100); 
\draw  [dash pattern={on 0.84pt off 2.51pt}]  (170.67,139) -- (328.02,139) -- (466.67,139) ;
\draw    (235.86,139.08) -- (269.46,101.08) ;
\draw [shift={(256.63,115.59)}, rotate = 131.48] [color={rgb, 255:red, 0; green, 0; blue, 0 }  ][line width=0.75]    (10.93,-3.29) .. controls (6.95,-1.4) and (3.31,-0.3) .. (0,0) .. controls (3.31,0.3) and (6.95,1.4) .. (10.93,3.29)   ;
\draw    (200.06,100.88) -- (235.86,139.08) ;
\draw [shift={(222.06,124.36)}, rotate = 226.86] [color={rgb, 255:red, 0; green, 0; blue, 0 }  ][line width=0.75]    (10.93,-3.29) .. controls (6.95,-1.4) and (3.31,-0.3) .. (0,0) .. controls (3.31,0.3) and (6.95,1.4) .. (10.93,3.29)   ;
\draw    (235.86,139.08) -- (271.66,177.28) ;
\draw [shift={(257.86,162.56)}, rotate = 226.86] [color={rgb, 255:red, 0; green, 0; blue, 0 }  ][line width=0.75]    (10.93,-3.29) .. controls (6.95,-1.4) and (3.31,-0.3) .. (0,0) .. controls (3.31,0.3) and (6.95,1.4) .. (10.93,3.29)   ;
\draw    (202.26,177.08) -- (235.86,139.08) ;
\draw [shift={(223.03,153.59)}, rotate = 131.48] [color={rgb, 255:red, 0; green, 0; blue, 0 }  ][line width=0.75]    (10.93,-3.29) .. controls (6.95,-1.4) and (3.31,-0.3) .. (0,0) .. controls (3.31,0.3) and (6.95,1.4) .. (10.93,3.29)   ;
\draw    (410.76,139.08) -- (444.36,101.08) ;
\draw [shift={(431.53,115.59)}, rotate = 131.48] [color={rgb, 255:red, 0; green, 0; blue, 0 }  ][line width=0.75]    (10.93,-3.29) .. controls (6.95,-1.4) and (3.31,-0.3) .. (0,0) .. controls (3.31,0.3) and (6.95,1.4) .. (10.93,3.29)   ;
\draw    (374.96,100.88) -- (410.76,139.08) ;
\draw [shift={(396.96,124.36)}, rotate = 226.86] [color={rgb, 255:red, 0; green, 0; blue, 0 }  ][line width=0.75]    (10.93,-3.29) .. controls (6.95,-1.4) and (3.31,-0.3) .. (0,0) .. controls (3.31,0.3) and (6.95,1.4) .. (10.93,3.29)   ;
\draw    (410.76,139.08) -- (446.56,177.28) ;
\draw [shift={(432.76,162.56)}, rotate = 226.86] [color={rgb, 255:red, 0; green, 0; blue, 0 }  ][line width=0.75]    (10.93,-3.29) .. controls (6.95,-1.4) and (3.31,-0.3) .. (0,0) .. controls (3.31,0.3) and (6.95,1.4) .. (10.93,3.29)   ;
\draw    (377.16,177.08) -- (410.76,139.08) ;
\draw [shift={(397.93,153.59)}, rotate = 131.48] [color={rgb, 255:red, 0; green, 0; blue, 0 }  ][line width=0.75]    (10.93,-3.29) .. controls (6.95,-1.4) and (3.31,-0.3) .. (0,0) .. controls (3.31,0.3) and (6.95,1.4) .. (10.93,3.29)   ;
\draw    (100,123) ;
\draw    (100,101) ;
\draw [color={rgb, 255:red, 247; green, 57; blue, 57 }  ,draw opacity=1 ][line width=1.5]    (322.96,138.63) -- (324.14,139.81) ;

\draw (228,144.4) node [anchor=north west][inner sep=0.75pt]    {$k_{1}$};
\draw (406,143.4) node [anchor=north west][inner sep=0.75pt]    {$k_{2}$};
\draw (317,145.4) node [anchor=north west][inner sep=0.75pt]    {$0$};
\draw (477,130.4) node [anchor=north west][inner sep=0.75pt]    {$\mathrm{Re} k$};
\end{tikzpicture}
\caption{The jump lines $\Sigma^{lo}$.}
\label{lo}
\end{figure}
To solve the RH problem $M^R(k)$, we use the PC model to estimate the behavior near stationary points, which will be defined as a new local RH problem $M^{lo}(k)\equiv M^{lo}(y,t,k)$. And then we find the error function between $M^R(k)$ and $M^{lo}(k)$ to deduce the asymptotic representation of $M^R(k)$.

To introduce the local function, we denote several lines as shown in Figure \ref{lo}
\begin{equation*}
\Sigma_{jl}^{lo}=\Sigma
_{jl}\cap U_j,\ j=1,2,\ l=1,2,3,4,
\end{equation*}
and
\begin{equation*}
\Sigma^{lo}=\Sigma^{lo}_1\cup\Sigma^{lo}_2,\quad \Sigma^{lo}_j=\cup_{l=1}^4\Sigma_{jl}^{lo}.
\end{equation*}
Now we consider the following RH problem
\begin{rhp}\label{rhplo}
Find a $2\times 2$ matrix-valued function $M^{lo}(k)$ admitting the properties as follows:
\begin{itemize}
  \item Analyticity: $M^{lo}(k)$ is analytical in $\mathbb{C}\setminus\Sigma^{lo};$
  \item Jump condition: $M^{lo}(k)$ has continuous boundary values $M^{lo}_{\pm}(k)$ on $\Sigma^{lo}$ and
      \begin{equation}
      M^{lo}_+(k)=M^{lo}_-(k)V^{(2)}(k),\quad k\in \Sigma^{lo};
      \end{equation}
  \item Asymptotic behavior:
  $
  M^{lo}(k)=I+\mathcal{O}(k^{-1}),\quad k\to\infty.
$
\end{itemize}
\end{rhp}
For $j=1,2$ and $l=1,2,3,4$, denote
\begin{equation*}
\omega_{jl}(k)=\left\{\begin{array}{ll}
\left(\begin{array}{cc}
  0&0\\ (-1)^{\frac{l-1}{2}}R_{jl}e^{2it\theta}&0\end{array}\right),&k\in\Sigma_{jl},\ j=1,2,\ l=1,3,\\
  \left(\begin{array}{cc}
  0&(-1)^{\frac{l-2}{2}}R_{jl}e^{-2it\theta}\\ 0&0\end{array}\right),&k\in\Sigma_{jl},\ j=1,2,\ l=2,4,
\end{array}\right.
\end{equation*}
then $V^{(2)}(k)=I-\omega_{jl}(k)$ for $k\in\Sigma_{jl}$. Let
\begin{equation*}
\omega_j(k)=\sum_{l=1}^4\omega_{jl}(k),\ \omega(k)=\omega_1+\omega_2,
\end{equation*}
and
\begin{equation*}
\omega_{jl}^{\pm}(k)=\omega_{jl}(k)|_{\mathbb{C}^{\pm}},\ \omega_{j}^{\pm}(k)=\omega_{j}(k)|_{\mathbb{C}^{\pm}},\ \omega^{\pm}(k)=\omega(k)|_{\mathbb{C}^{\pm}}.
\end{equation*}
Considering the Cauchy projection operator $\mathcal{C}_{\pm}$ on $\Sigma$,
we define the following operator:
\begin{equation}\label{cw}
\mathcal{C}_{\omega}(f)=\mathcal{C}_+(f\omega^-)+\mathcal{C}_-(f\omega^+),\ \mathcal{C}_{\omega_j}(f)=\mathcal{C}_+(f\omega_j^-)+\mathcal{C}_-(f\omega_j^+).
\end{equation}
Thus $\mathcal{C}_{\omega}=\mathcal{C}_{\omega_1}+\mathcal{C}_{\omega_2}$. \begin{lemma}\label{wjl}
For the function $\omega_{jl}(k),\ j=1,2,\ l=1,2,3,4$, we have following estimate:
\begin{equation}
\| \omega_{jl}(k)\|_{L^2}\lesssim t^{-1/2},
\end{equation}
\end{lemma}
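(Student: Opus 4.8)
The plan is to reduce the matrix estimate to a scalar one and then localize at the stationary points. On each ray $\Sigma_{jl}$ the matrix $\omega_{jl}$ has a single nonzero entry equal (up to sign) to $R_{jl}(k)\,e^{\pm 2it\theta(k)}$, so that
$\|\omega_{jl}\|_{L^2(\Sigma_{jl})}=\|R_{jl}e^{\pm 2it\theta}\|_{L^2(\Sigma_{jl})}$, and it suffices to estimate this scalar norm. I would split $\Sigma_{jl}=(\Sigma_{jl}\setminus U_j)\cup\Sigma_{jl}^{lo}$ with $\Sigma_{jl}^{lo}=\Sigma_{jl}\cap U_j$, and treat the two pieces by entirely different mechanisms: exponential decay away from $k_j$, and Gaussian decay near $k_j$.

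On the far piece, $\omega_{jl}=I-V^{(2)}$ on $\Sigma_{jl}$, so Proposition \ref{propp4} immediately gives $\|\omega_{jl}\|_{L^2(\Sigma_{jl}\setminus U_j)}=\mathcal{O}(e^{-ct})$, which is negligible, and the whole contribution therefore comes from $\Sigma_{jl}^{lo}$. There I would parametrize $k=k_j+u\,e^{i\alpha_l}$, where $\alpha_l$ is the fixed opening angle of the ray and $u\in[0,\rho_0/4]$, so that $|dk|=du$. Since $k_j$ is a stationary point, $\theta'(k_j)=0$ and $\theta''(k_j)=-k_j^{-3}$ with $|\theta''(k_j)|=\rho_0^{-3}$; a second-order Taylor expansion then yields $\mathrm{Re}\bigl(\pm 2it\theta(k)\bigr)=-c\,t\,u^2+\mathcal{O}(tu^3)$ for some $c>0$, because the rays of Figure \ref{lin1} are chosen along the steepest-descent directions of $\pm\mathrm{Im}\,\theta$ (consistent with the sign chart of Figure \ref{sign1}). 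Hence $|e^{\pm 2it\theta}|\lesssim e^{-ctu^2}$ on $\Sigma_{jl}^{lo}$. Combining this with the uniform boundedness of $R_{jl}$ from Proposition \ref{Rjl}—note that $|(k-k_j)^{\pm 2i\nu(k_j)}|=e^{\mp 2\nu(k_j)\alpha_l}$ is bounded and $1-|r|^2\ge c_0>0$—the local estimate collapses to a Gaussian integral $\int_0^{\infty}e^{-2ctu^2}\,du$ in the single variable $u$, which decays at the rate asserted in the lemma.

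The main obstacle is the local phase estimate, i.e. upgrading the infinitesimal statement $\theta''(k_j)\neq 0$ to a uniform quadratic bound $\pm\mathrm{Im}\,\theta(k)\gtrsim u^2$ valid on all of $\Sigma_{jl}^{lo}$ and not merely in a shrinking neighborhood of $k_j$. For this I would exploit the explicit identity $\mathrm{Im}\,\theta=\mathrm{Im}\,k\,(2|k|^2\xi+1)/(2|k|^2)$ together with Lemma \ref{Imk} to fix the correct sign along each of the eight rays and to absorb the cubic Taylor remainder into the quadratic leading term on the bounded segment $u\in[0,\rho_0/4]$. A secondary bookkeeping point is the dependence of the constants on $\rho_0$ (equivalently on $\xi$): one must track the powers of $\rho_0$ generated by $\theta''(k_j)^{-1}=-k_j^{3}$ through the Gaussian integral so that the implied constant is uniform over the region $\xi\le-\epsilon$ under consideration.
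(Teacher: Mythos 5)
The paper gives no argument for Lemma \ref{wjl} beyond the remark that it ``can be directly proved by calculation,'' so the only question is whether your calculation is correct --- and it is not, at the decisive last step. Your reductions are fine: the matrix norm equals the scalar norm $\|R_{jl}e^{\pm2it\theta}\|_{L^2(\Sigma_{jl})}$; the part of $\Sigma_{jl}$ outside $U_j$ is exponentially small by Proposition \ref{propp4}; and on $\Sigma_{jl}^{lo}$ one has $|R_{jl}|\lesssim 1$ (Proposition \ref{Rjl}) and $|e^{\pm2it\theta}|\lesssim e^{-ctu^2}$ with $u=|k-k_j|$, the signs being fixed by Figure \ref{sign1} and the cubic remainder absorbable on the bounded segment $u\le\rho_0/4$. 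But the quantity these reductions control is
\begin{equation*}
\|\omega_{jl}\|_{L^2(\Sigma_{jl}^{lo})}^{2}\;\lesssim\;\int_0^{\rho_0/4}e^{-2ctu^2}\,\mathrm{d}u\;\lesssim\;t^{-1/2},
\end{equation*}
i.e.\ the Gaussian integral you exhibit is the \emph{square} of the $L^2$ norm. Taking square roots, your argument proves $\|\omega_{jl}\|_{L^2}\lesssim t^{-1/4}$, not the $t^{-1/2}$ asserted in the lemma; in your final sentence you conflated the norm with its square.

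This gap cannot be closed, because $t^{-1/4}$ is sharp. Along $\Sigma_{jl}^{lo}$ the modulus $|R_{jl}|$ is constant in $u$ (it equals $|r(k_j)e^{-2i\beta_j(k_j)}|\,e^{\mp2\nu(k_j)\alpha_l}$, using your own observation about $|(k-k_j)^{\pm2i\nu}|$), hence bounded below whenever $r(k_j)\neq0$, while $|e^{\pm2it\theta}|\ge e^{-C}$ on the sub-segment $u\le t^{-1/2}$; so $\|\omega_{jl}\|_{L^2}\gtrsim |r(k_j)|\,t^{-1/4}$. The jump deviation is $O(1)$ in modulus on a segment of length $\sim t^{-1/2}$ --- exactly the parabolic-cylinder scale --- and this forces the $L^2$ rate $t^{-1/4}$. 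The rate $t^{-1/2}$ claimed by the lemma is the correct rate for the $L^1$ norm (or for the squared $L^2$ norm), which is the standard Deift-Zhou bookkeeping: $\|\omega\|_{L^\infty}=\mathcal{O}(1)$, $\|\omega\|_{L^2}=\mathcal{O}(t^{-1/4})$, $\|\omega\|_{L^1}=\mathcal{O}(t^{-1/2})$. In other words, the statement you were asked to prove is false as an $L^2$ bound when $r(k_j)\neq0$; a correctly executed version of your own calculation is precisely what exposes this, and the honest conclusions are $\|\omega_{jl}\|_{L^2}\lesssim t^{-1/4}$ and $\|\omega_{jl}\|_{L^1}\lesssim t^{-1/2}$, with the paper's subsequent corollary on $\mathcal{C}_{\omega_j}\mathcal{C}_{\omega_l}$ needing to be re-derived from those norms rather than from the lemma as stated.
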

This Lemma can be directly proved by calculation. And thus $I-\mathcal{C}_{\omega}$ and $I-\mathcal{C}_{\omega_j}$ are reversible. Therefore, there exists unique solution of RH problem \ref{rhplo}, and it is denoted as
\begin{equation}
M^{lo}(k)=I+\frac{1}{2\pi i}\int_{\Sigma^{lo}}\frac{(I-\mathcal{C}_{\omega})^{-1}I\omega}{s-k}\mathrm{d}s.
\end{equation}
Besides, by \eqref{cw} and Lemma \ref{wjl}, we deduce following corollary:
\begin{corollary}As $t\to\infty$,
\begin{equation*}
\| \mathcal{C}_{\omega_j}\mathcal{C}_{\omega_l}\|_{B(L^2(\Sigma^{lo}))}\lesssim t^{-1},\quad \| \mathcal{C}_{\omega_j}\mathcal{C}_{\omega_l}\|_{(L^{\infty}(\Sigma^{lo}\to L^2(\Sigma^{lo})))}\lesssim t^{-1}.
\end{equation*}
\end{corollary}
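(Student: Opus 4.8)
The plan is to exploit the geometric separation of the two local pieces. For $j\neq l$ the multipliers $\omega_j$ and $\omega_l$ are supported on the disjoint arcs $\Sigma^{lo}_j=\Sigma_j\cap U_j$ and $\Sigma^{lo}_l=\Sigma_l\cap U_l$, and since $U_1,U_2$ are the balls of radius $\rho_0/4$ about $k_1=-\rho_0$ and $k_2=\rho_0$, their distance is bounded below by $\mathrm{dist}(U_1,U_2)\geq \tfrac{3}{2}\rho_0>0$. This separation lets me reduce one of the two Cauchy operators to a kernel operator with a uniformly bounded, nonsingular kernel, so that the $L^2$-decay of Lemma \ref{wjl} can be applied \emph{twice}, each use contributing a factor $t^{-1/2}$; their product gives the desired $t^{-1}$. (All implied constants may depend on the fixed $\rho_0=\rho_0(\xi)$.) The case $j=l$ is not needed, as the corollary is used only for the cross terms of the Neumann expansion of $(I-\mathcal{C}_\omega)^{-1}$.

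First I record the two ingredients. The Cauchy projections $\mathcal{C}_{\pm}$ are bounded on $L^2(\Sigma^{lo})$ with norms independent of $t$, and from $\omega_j=\sum_{l=1}^4\omega_{jl}$ together with Lemma \ref{wjl} one has $\|\omega_j\|_{L^2}\lesssim t^{-1/2}$; since each $\Sigma^{lo}_j$ has fixed finite arclength, Cauchy--Schwarz also yields $\|\omega_j\|_{L^1}\lesssim\|\omega_j\|_{L^2}\lesssim t^{-1/2}$. Next I estimate the inner operator on the support of the outer one. By \eqref{cw}, for any input $f$ the function $\mathcal{C}_{\omega_l}f$ is the Cauchy transform of $f\,\omega_l^{\mp}$ over $\Sigma^{lo}_l$; restricted to $\Sigma^{lo}_j$, which lies off $\Sigma^{lo}_l$, the operators $\mathcal{C}_{+}$ and $\mathcal{C}_{-}$ coincide and reduce to a genuine (nonsingular) Cauchy integral whose kernel $(s-k)^{-1}$ is bounded by a constant because $|s-k|\gtrsim\rho_0$ there.

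Applying Cauchy--Schwarz to this nonsingular integral gives, for $f\in L^2(\Sigma^{lo})$,
\begin{equation*}
\|\mathcal{C}_{\omega_l}f\|_{L^{\infty}(\Sigma^{lo}_j)}\lesssim \|f\|_{L^2}\,\|\omega_l\|_{L^2}\lesssim t^{-1/2}\|f\|_{L^2}.
\end{equation*}
Since the multiplier $\omega_j$ of $\mathcal{C}_{\omega_j}$ is supported on $\Sigma^{lo}_j$, the $L^2$-boundedness of $\mathcal{C}_{\pm}$ then yields
\begin{equation*}
\|\mathcal{C}_{\omega_j}\mathcal{C}_{\omega_l}f\|_{L^2}\lesssim \|(\mathcal{C}_{\omega_l}f)\,\omega_j\|_{L^2}\leq \|\mathcal{C}_{\omega_l}f\|_{L^{\infty}(\Sigma^{lo}_j)}\,\|\omega_j\|_{L^2}\lesssim t^{-1}\|f\|_{L^2},
\end{equation*}
which is the first bound. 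For the second, I run the same computation with $f\in L^{\infty}(\Sigma^{lo})$, now estimating the inner integral by $\|\mathcal{C}_{\omega_l}f\|_{L^{\infty}(\Sigma^{lo}_j)}\lesssim \|f\|_{L^{\infty}}\|\omega_l\|_{L^1}\lesssim t^{-1/2}\|f\|_{L^{\infty}}$, and conclude $\|\mathcal{C}_{\omega_j}\mathcal{C}_{\omega_l}f\|_{L^2}\lesssim \|\mathcal{C}_{\omega_l}f\|_{L^{\infty}(\Sigma^{lo}_j)}\|\omega_j\|_{L^2}\lesssim t^{-1}\|f\|_{L^{\infty}}$.

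The only real point requiring care is the reduction of the inner operator to a nonsingular kernel: one must verify that on $\Sigma^{lo}_j$ the boundary values $\mathcal{C}_{\pm}$ agree and that the off-diagonal Cauchy kernel is uniformly bounded, which is exactly where the disjointness $\Sigma^{lo}_j\cap\Sigma^{lo}_l=\varnothing$ (for $j\neq l$) enters. Once this is in place, combining the two $t^{-1/2}$ factors from Lemma \ref{wjl} is routine. I expect this separation-of-supports step to be the main obstacle, since it is precisely what would fail for $j=l$, where the singular Cauchy kernel forces the useless $\|\omega_j\|_{L^\infty}$ bound instead of the decaying $L^2$ norm.
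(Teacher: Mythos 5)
Your proposal is correct and takes essentially the same route the paper intends: the paper gives no written proof beyond invoking \eqref{cw} and Lemma \ref{wjl}, and the standard way to combine those two ingredients --- the argument of the cited reference \cite{HG} --- is precisely your separation-of-supports estimate, in which the disjointness of $\Sigma^{lo}_1$ and $\Sigma^{lo}_2$ makes the inner Cauchy kernel nonsingular on the support of the outer multiplier, so that each factor of $\omega$ contributes $t^{-1/2}$ via Lemma \ref{wjl}. Your reading of the corollary as a statement about $j\neq l$ is also the intended one, as the paper's subsequent remark that ``the contributions of every crosses $\Sigma^{lo}_j$ are separated out'' confirms.
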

Therefore, by above results, we finally find that the contributions of every crosses $\Sigma^{lo}_j,\ j=1,2$ are separated out.

The RH problems near the two stationary points are estimated by PC model respectively. Here we take the estimate near $k_1$ for example. Consider the following RH problem:
\begin{rhp}Find a $2\times 2$ matrix-valued function $M^{lo,1}(k)\equiv M^{lo,1}(y,t,k)$ satisfying the following properties:
\begin{itemize}
  \item Analyticity: $M^{lo,1}(k)$ is analytical in $\mathbb{C}\setminus\Sigma^{lo}_1;$
  \item Jump condition: $M^{lo,1}(k)$ has continuous boundary values $M^{lo,1}_{\pm}(k)$ on $\Sigma^{lo}_1$, and
      \begin{equation}
      M^{lo,1}_+(k)=M^{lo,1}_-(k)V^{lo,1}(k),\quad k\in \Sigma_1^{lo},
      \end{equation}
  where
  \begin{equation}
  V^{lo,1}(k)=\left\{\begin{array}{ll}
  \left(\begin{array}{cc}1&0\\r(k_1)(k-k_1)^{-2i\nu(k_1)}e^{-2i\beta_1(k_1)}e^{2it\theta}&1\end{array}\right),&k\in\Sigma_{11}^{lo},\\
  \left(\begin{array}{cc}1&-\frac{\overline{r(k_1)}}{1-|r(k_1)|^2}(k-k_1)^{2i\nu(k_1)}e^{2i\beta_1(k_1)}e^{-2it\theta}\\0&1\end{array}\right),&k\in\Sigma_{12}^{lo},\\
  \left(\begin{array}{cc}1&0\\\frac{r(k_1)}{1-|r(k_1)|^2}(k-k_1)^{-2i\nu(k_1)}e^{-2i\beta_1(k_1)}e^{2it\theta}&1\end{array}\right),&k\in\Sigma_{13}^{lo},\\
  \left(\begin{array}{cc}1&-\overline{r(k_1)}(k-k_1)^{2i\nu(k_1)}e^{2i\beta_1(k_1)}e^{-2it\theta}\\0&1\end{array}\right),&k\in\Sigma_{14}^{lo};
  \end{array}\right.
  \end{equation}
  \item Asymptotic behavior:
$
  M^{lo,1}(k)=I+\mathcal{O}(k^{-1}),\quad k\to\infty.
$
\end{itemize}
\end{rhp}
Taking Cauchy expansion at $k=k_1$ for $\theta$, we get
\begin{equation}
\theta=-\frac{1}{k_1}-\frac{1}{k_1^3}(k-k_1)^2+\mathcal{O}((k-k_1)^3).
\end{equation}
Let $\zeta=\zeta(k)$ defined as follows:
\begin{equation}
\zeta=t^{1/2}\sqrt{-\frac{4}{k_1^3}}(k-k_1),
\end{equation}
and denote a parameter $\widetilde{r}_1$ as
\begin{equation}
\widetilde{r}_1=r(k_1)e^{-2i(\beta_1(k_1)+tk_1^{-1})}\mathrm{exp}\{i\nu(k_1)\mathrm{log}(-4tk_1^{-3})\}.
\end{equation}
Now $M^{lo,1}$ can match the following model RH problem:
\begin{rhp}\label{PC}
Find a $2\times 2$ matrix-valued function $M^{pc}(\zeta)$ with the following properties:
\begin{itemize}
  \item Asymptotic: $M^{pc}(\zeta)$ is analytical in $\mathbb{C}\setminus\Sigma^{pc}$ with $\Sigma^{pc}={\mathbb{R}e^{\varphi i}}\cup{\mathbb{R}e^{(\pi-\varphi) i}};$
  \item Jump condition: $M^{pc}(\zeta)$ has continuous boundary values $M_{\pm}^{pc}$ on $\Sigma^{pc}$ and
      \begin{equation}
      M^{pc}_+(\zeta)=M^{pc}_-(\zeta)V^{pc}(\zeta),\quad \zeta\in\Sigma^{pc},
      \end{equation}
      where
      \begin{equation}
V^{pc}(\zeta)=\left\{\begin{array}{ll}
\left(\begin{array}{cc}
1&0\\ \widetilde{r}_1\zeta^{-2i\nu(k_1)}e^{\frac{i}{2}\zeta^2}&1
\end{array}\right),&\zeta\in \mathbb{R}^+e^{\varphi i},\\
\left(\begin{array}{cc}
1&-\frac{\overline{\widetilde{r}_1}}{1-|\widetilde{r}_1|^2}\zeta^{2i\nu(k_1)}e^{-\frac{i}{2}\zeta^2}\\0 &1
\end{array}\right),&\zeta\in \mathbb{R}^+e^{(\pi-\varphi) i},\\
\left(\begin{array}{cc}
1&0\\ \frac{\widetilde{r}_1}{1-|\widetilde{r}_1|^2}\zeta^{-2i\nu(k_1)}e^{\frac{i}{2}\zeta^2}&1
\end{array}\right),&\zeta\in \mathbb{R}^+e^{(\pi+\varphi) i},\\
\left(\begin{array}{cc}
1&-\overline{\widetilde{r}_1}\zeta^{2i\nu(k_1)}e^{-\frac{i}{2}\zeta^2}\\ 0&1
\end{array}\right),&\zeta\in \mathbb{R}^+e^{-\varphi i};\\
\end{array}\right.
\end{equation}
  \item Asymptotic behavior:
  \begin{equation}\label{pca}
M^{pc}(\zeta)=I+\frac{M_1^{pc}}{\zeta}+\mathcal{O}(\zeta^{-2}),\quad \zeta\to\infty.
\end{equation}
\end{itemize}
\end{rhp}
There exists the unique solution of RH problem \ref{PC} with the asymptotic behavior as \eqref{pca}, where
\begin{equation*}
M_1^{pc}=\left(\begin{array}{cc}0&-i\beta^1_{12}\\i\beta^1_{21}&0\end{array}\right),
\end{equation*}
and
\begin{equation}
\beta^1_{12}=\frac{\sqrt{2\pi}e^{-\frac{\pi}{2}\nu(k_1)}e^{\frac{\pi}{4}i}}{\widetilde{r}_1\Gamma(-i\nu(k_1))},\quad \beta^1_{21}=\frac{\nu(k_1)}{\beta^1_{12}}.
\end{equation}

As proved in A1-A6 of \cite{HG}, $M^{lo,1}(k)$ is estimated as:
\begin{equation*}
M^{lo,1}(k)=I+t^{-\frac{1}{2}}\frac{\rho_0^{3/2}}{k-k_1
}\frac{i}{2}\left(\begin{array}{cc}0&-\beta^1_{12}\\ \beta^1_{21}&0\end{array}\right)+\mathcal{O}(t^{-1}).
\end{equation*}
Similarly,
\begin{equation*}
M^{lo,2}(k)=I+t^{-\frac{1}{2}}\frac{\rho_0^{3/2}}{k-k_2}\frac{i}{2}\left(\begin{array}{cc}0&-\beta^2_{12}\\ \beta^2_{21}&0\end{array}\right)+\mathcal{O}(t^{-1}),
\end{equation*}
where
\begin{equation}
\beta^2_{12}=\frac{\sqrt{2\pi}e^{\frac{3\pi}{2}\nu(k_2)}e^{\frac{3\pi}{4}i}}{\widetilde{r}_2\Gamma(i\nu(k_2))},\quad \beta^2_{21}=\frac{\nu(k_2)}{\beta^2_{12}},
\end{equation}
and
\begin{equation}
\widetilde{r}_2=r(k_2)e^{-2i(\beta_2(k_2)+tk_2^{-1})}\mathrm{exp}\{i\nu(k_2)\mathrm{log}(-4tk_2^{-3})\}.
\end{equation}
Therefore, we have
\begin{equation}\label{mlo}
M^{lo}(k)=I+t^{\frac{1}{2}}\frac{\rho_0^{3/2}i}{2}\sum_{j=1}^2\frac{B_j}{k-k_j}+\mathcal{O}(t^{-1}),
\end{equation}
where
\begin{equation}
B_j=\left(\begin{array}{cc}0&-\beta_{12}^j\\ \beta_{21}^j&0\end{array}\right),\quad j=1,2.
\end{equation}

\subsubsection{Error function with small norm }
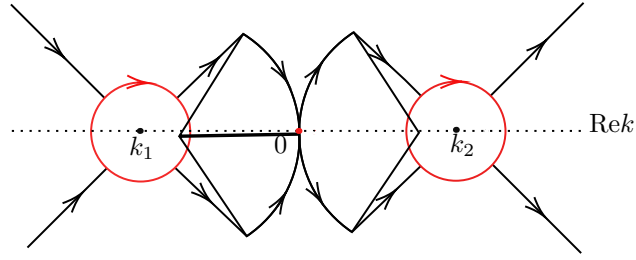
\begin{figure}
\tikzset{every picture/.style={line width=0.75pt}} 

\begin{tikzpicture}[x=0.75pt,y=0.75pt,yscale=-1,xscale=1]
uncomment if require: \path (40,100); 
\tikzset{every picture/.style={line width=0.75pt}} 

\draw    (100,123) ;
\draw    (100,101) ;
\draw [color={rgb, 255:red, 247; green, 57; blue, 57 }  ,draw opacity=1 ][line width=1.5]    (322.96,138.63) -- (324.14,139.81) ;
\draw    (170.44,74.7) -- (215.9,120.16) -- (245.09,149.35) -- (289.36,191.67) ;
\draw [shift={(197.41,101.67)}, rotate = 225] [color={rgb, 255:red, 0; green, 0; blue, 0 }  ][line width=0.75]    (10.93,-3.29) .. controls (6.95,-1.4) and (3.31,-0.3) .. (0,0) .. controls (3.31,0.3) and (6.95,1.4) .. (10.93,3.29)   ;
\draw [shift={(234.74,139)}, rotate = 225] [color={rgb, 255:red, 0; green, 0; blue, 0 }  ][line width=0.75]    (10.93,-3.29) .. controls (6.95,-1.4) and (3.31,-0.3) .. (0,0) .. controls (3.31,0.3) and (6.95,1.4) .. (10.93,3.29)   ;
\draw [shift={(271.56,174.66)}, rotate = 223.71] [color={rgb, 255:red, 0; green, 0; blue, 0 }  ][line width=0.75]    (10.93,-3.29) .. controls (6.95,-1.4) and (3.31,-0.3) .. (0,0) .. controls (3.31,0.3) and (6.95,1.4) .. (10.93,3.29)   ;
\draw    (178.77,197.4) -- (225.82,150.35) -- (254.82,121.35) -- (287.59,89.58) ;
\draw [shift={(206.53,169.63)}, rotate = 135] [color={rgb, 255:red, 0; green, 0; blue, 0 }  ][line width=0.75]    (10.93,-3.29) .. controls (6.95,-1.4) and (3.31,-0.3) .. (0,0) .. controls (3.31,0.3) and (6.95,1.4) .. (10.93,3.29)   ;
\draw [shift={(244.56,131.61)}, rotate = 135] [color={rgb, 255:red, 0; green, 0; blue, 0 }  ][line width=0.75]    (10.93,-3.29) .. controls (6.95,-1.4) and (3.31,-0.3) .. (0,0) .. controls (3.31,0.3) and (6.95,1.4) .. (10.93,3.29)   ;
\draw [shift={(275.51,101.29)}, rotate = 135.89] [color={rgb, 255:red, 0; green, 0; blue, 0 }  ][line width=0.75]    (10.93,-3.29) .. controls (6.95,-1.4) and (3.31,-0.3) .. (0,0) .. controls (3.31,0.3) and (6.95,1.4) .. (10.93,3.29)   ;
\draw    (343.29,88.75) -- (369.26,112.85) -- (395.71,139.31) -- (413.16,155.35) -- (458.01,200.2) ;
\draw [shift={(360.67,104.88)}, rotate = 222.88] [color={rgb, 255:red, 0; green, 0; blue, 0 }  ][line width=0.75]    (10.93,-3.29) .. controls (6.95,-1.4) and (3.31,-0.3) .. (0,0) .. controls (3.31,0.3) and (6.95,1.4) .. (10.93,3.29)   ;
\draw [shift={(386.73,130.32)}, rotate = 225] [color={rgb, 255:red, 0; green, 0; blue, 0 }  ][line width=0.75]    (10.93,-3.29) .. controls (6.95,-1.4) and (3.31,-0.3) .. (0,0) .. controls (3.31,0.3) and (6.95,1.4) .. (10.93,3.29)   ;
\draw [shift={(408.85,151.39)}, rotate = 222.59] [color={rgb, 255:red, 0; green, 0; blue, 0 }  ][line width=0.75]    (10.93,-3.29) .. controls (6.95,-1.4) and (3.31,-0.3) .. (0,0) .. controls (3.31,0.3) and (6.95,1.4) .. (10.93,3.29)   ;
\draw [shift={(439.83,182.02)}, rotate = 225] [color={rgb, 255:red, 0; green, 0; blue, 0 }  ][line width=0.75]    (10.93,-3.29) .. controls (6.95,-1.4) and (3.31,-0.3) .. (0,0) .. controls (3.31,0.3) and (6.95,1.4) .. (10.93,3.29)   ;
\draw    (342.42,189.57) -- (378.09,155.35) -- (413.34,119.78) -- (459.24,73.88) ;
\draw [shift={(364.58,168.31)}, rotate = 136.19] [color={rgb, 255:red, 0; green, 0; blue, 0 }  ][line width=0.75]    (10.93,-3.29) .. controls (6.95,-1.4) and (3.31,-0.3) .. (0,0) .. controls (3.31,0.3) and (6.95,1.4) .. (10.93,3.29)   ;
\draw [shift={(399.94,133.31)}, rotate = 134.75] [color={rgb, 255:red, 0; green, 0; blue, 0 }  ][line width=0.75]    (10.93,-3.29) .. controls (6.95,-1.4) and (3.31,-0.3) .. (0,0) .. controls (3.31,0.3) and (6.95,1.4) .. (10.93,3.29)   ;
\draw [shift={(440.53,92.59)}, rotate = 135] [color={rgb, 255:red, 0; green, 0; blue, 0 }  ][line width=0.75]    (10.93,-3.29) .. controls (6.95,-1.4) and (3.31,-0.3) .. (0,0) .. controls (3.31,0.3) and (6.95,1.4) .. (10.93,3.29)   ;
\draw  [color={rgb, 255:red, 243; green, 41; blue, 41 }  ,draw opacity=1 ][fill={rgb, 255:red, 255; green, 255; blue, 255 }  ,fill opacity=1 ] (210.86,139.08) .. controls (210.86,125.27) and (222.05,114.08) .. (235.86,114.08) .. controls (249.67,114.08) and (260.86,125.27) .. (260.86,139.08) .. controls (260.86,152.89) and (249.67,164.08) .. (235.86,164.08) .. controls (222.05,164.08) and (210.86,152.89) .. (210.86,139.08) -- cycle ;
\draw  [color={rgb, 255:red, 235; green, 32; blue, 32 }  ,draw opacity=1 ][fill={rgb, 255:red, 255; green, 255; blue, 255 }  ,fill opacity=1 ] (369.86,138.68) .. controls (369.86,124.87) and (381.05,113.68) .. (394.86,113.68) .. controls (408.67,113.68) and (419.86,124.87) .. (419.86,138.68) .. controls (419.86,152.49) and (408.67,163.68) .. (394.86,163.68) .. controls (381.05,163.68) and (369.86,152.49) .. (369.86,138.68) -- cycle ;
\draw  [dash pattern={on 0.84pt off 2.51pt}]  (170.88,138.53) -- (323.94,138.53) -- (458.8,138.53) ;
\draw  [fill={rgb, 255:red, 9; green, 8; blue, 8 }  ,fill opacity=1 ] (235.35,138.62) .. controls (235.35,138.36) and (235.56,138.15) .. (235.82,138.15) .. controls (236.07,138.15) and (236.28,138.36) .. (236.28,138.62) .. controls (236.28,138.87) and (236.07,139.08) .. (235.82,139.08) .. controls (235.56,139.08) and (235.35,138.87) .. (235.35,138.62) -- cycle ;
\draw  [fill={rgb, 255:red, 9; green, 8; blue, 8 }  ,fill opacity=1 ] (409.86,138.68) .. controls (409.86,138.43) and (410.07,138.22) .. (410.32,138.22) .. controls (410.58,138.22) and (410.78,138.43) .. (410.78,138.68) .. controls (410.78,138.93) and (410.58,139.14) .. (410.32,139.14) .. controls (410.07,139.14) and (409.86,138.93) .. (409.86,138.68) -- cycle ;
\draw [color={rgb, 255:red, 243; green, 15; blue, 15 }  ,draw opacity=1 ]   (233.86,114.08) -- (237.86,114.28) ;
\draw [shift={(239.86,114.38)}, rotate = 182.86] [color={rgb, 255:red, 243; green, 15; blue, 15 }  ,draw opacity=1 ][line width=0.75]    (10.93,-3.29) .. controls (6.95,-1.4) and (3.31,-0.3) .. (0,0) .. controls (3.31,0.3) and (6.95,1.4) .. (10.93,3.29)   ;
\draw [color={rgb, 255:red, 243; green, 15; blue, 15 }  ,draw opacity=1 ]   (392.86,113.68) -- (395.96,113.66) ;
\draw [shift={(397.96,113.64)}, rotate = 179.55] [color={rgb, 255:red, 243; green, 15; blue, 15 }  ,draw opacity=1 ][line width=0.75]    (10.93,-3.29) .. controls (6.95,-1.4) and (3.31,-0.3) .. (0,0) .. controls (3.31,0.3) and (6.95,1.4) .. (10.93,3.29)   ;
\draw  [draw opacity=0] (287.59,89.58) .. controls (304.38,100.09) and (315.63,118.61) .. (315.99,139.79) -- (255.55,140.85) -- cycle ; \draw   (287.59,89.58) .. controls (304.38,100.09) and (315.63,118.61) .. (315.99,139.79) ;
\draw  [draw opacity=0] (315.99,140.49) .. controls (316,140.84) and (316,141.2) .. (316,141.55) .. controls (316,162.41) and (305.43,180.81) .. (289.36,191.67) -- (255.55,141.55) -- cycle ; \draw   (315.99,140.49) .. controls (316,140.84) and (316,141.2) .. (316,141.55) .. controls (316,162.41) and (305.43,180.81) .. (289.36,191.67) ;
\draw  [draw opacity=0] (342.42,189.57) .. controls (326.34,178.71) and (315.77,160.31) .. (315.77,139.45) .. controls (315.77,118.21) and (326.73,99.53) .. (343.29,88.75) -- (376.22,139.45) -- cycle ; \draw   (342.42,189.57) .. controls (326.34,178.71) and (315.77,160.31) .. (315.77,139.45) .. controls (315.77,118.21) and (326.73,99.53) .. (343.29,88.75) ;
\draw  [color={rgb, 255:red, 255; green, 255; blue, 255 }  ,draw opacity=1 ][line width=0.75] [line join = round][line cap = round] (323.35,139.88) .. controls (323.35,139.88) and (323.35,139.88) .. (323.35,139.88) ;
\draw  [color={rgb, 255:red, 255; green, 255; blue, 255 }  ,draw opacity=1 ][line width=0.75] [line join = round][line cap = round] (322.85,139.87) .. controls (323.18,139.79) and (323.52,139.71) .. (323.85,139.63) ;
\draw  [color={rgb, 255:red, 255; green, 255; blue, 255 }  ,draw opacity=1 ][line width=0.75] [line join = round][line cap = round] (325.1,138.63) .. controls (322.86,140.55) and (322.76,140.62) .. (324.85,138.88) ;
\draw  [color={rgb, 255:red, 255; green, 255; blue, 255 }  ,draw opacity=1 ][line width=0.75] [line join = round][line cap = round] (322.1,138.62) .. controls (322.43,138.96) and (322.77,139.29) .. (323.1,139.63) ;
\draw  [color={rgb, 255:red, 255; green, 255; blue, 255 }  ,draw opacity=1 ][line width=0.75] [line join = round][line cap = round] (324.1,139.12) .. controls (324.1,139.29) and (324.1,139.46) .. (324.1,139.63) ;
\draw  [color={rgb, 255:red, 255; green, 255; blue, 255 }  ,draw opacity=1 ][line width=0.75] [line join = round][line cap = round] (323.1,139.38) .. controls (323.35,139.46) and (323.6,139.54) .. (323.85,139.62) ;
\draw  [color={rgb, 255:red, 255; green, 255; blue, 255 }  ,draw opacity=1 ][line width=0.75] [line join = round][line cap = round] (323.1,139.38) .. controls (323.35,139.46) and (323.6,139.54) .. (323.85,139.62) ;
\draw  [color={rgb, 255:red, 255; green, 255; blue, 255 }  ,draw opacity=1 ][line width=0.75] [line join = round][line cap = round] (410.4,138.47) .. controls (410.4,140.95) and (410.4,139.45) .. (410.4,136.97) ;
\draw  [color={rgb, 255:red, 255; green, 255; blue, 255 }  ,draw opacity=1 ][line width=0.75] [line join = round][line cap = round] (410.4,138.23) .. controls (410.82,139.06) and (410.82,138.81) .. (409.65,138.23) ;
\draw  [color={rgb, 255:red, 255; green, 255; blue, 255 }  ,draw opacity=1 ][line width=0.75] [line join = round][line cap = round] (409.65,139.22) .. controls (410.07,138.72) and (410.48,138.22) .. (410.9,137.73) ;
\draw  [color={rgb, 255:red, 255; green, 255; blue, 255 }  ,draw opacity=1 ][line width=0.75] [line join = round][line cap = round] (410.65,139.47) .. controls (410.65,138.81) and (410.65,138.14) .. (410.65,137.48) ;
\draw  [color={rgb, 255:red, 255; green, 255; blue, 255 }  ,draw opacity=1 ][line width=0.75] [line join = round][line cap = round] (410.9,139.47) .. controls (410.82,138.89) and (410.73,138.31) .. (410.65,137.72) ;
\draw  [color={rgb, 255:red, 255; green, 255; blue, 255 }  ,draw opacity=1 ][line width=0.75] [line join = round][line cap = round] (411.15,140.73) .. controls (410.9,139.31) and (410.65,137.89) .. (410.4,136.47) ;
\draw  [color={rgb, 255:red, 242; green, 25; blue, 25 }  ,draw opacity=1 ][line width=2.25] [line join = round][line cap = round] (315.4,138.54) .. controls (315.59,138.54) and (315.78,138.54) .. (315.97,138.54) ;
\draw  [color={rgb, 255:red, 12; green, 12; blue, 12 }  ,draw opacity=1 ][line width=2.25] [line join = round][line cap = round] (394.83,137.97) .. controls (394.83,137.97) and (394.83,137.97) .. (394.83,137.97) ;
\draw  [color={rgb, 255:red, 12; green, 12; blue, 12 }  ,draw opacity=1 ][line width=2.25] [line join = round][line cap = round] (235.4,138.54) .. controls (235.4,138.54) and (235.4,138.54) .. (235.4,138.54) ;
\draw [color={rgb, 255:red, 6; green, 6; blue, 6 }  ,draw opacity=1 ]   (304.84,176.6) -- (308.02,171.39) ;
\draw [shift={(309.06,169.68)}, rotate = 121.38] [color={rgb, 255:red, 6; green, 6; blue, 6 }  ,draw opacity=1 ][line width=0.75]    (10.93,-3.29) .. controls (6.95,-1.4) and (3.31,-0.3) .. (0,0) .. controls (3.31,0.3) and (6.95,1.4) .. (10.93,3.29)   ;
\draw [color={rgb, 255:red, 6; green, 6; blue, 6 }  ,draw opacity=1 ]   (325.24,107.8) -- (323.64,109.8) -- (326.16,105.88) ;
\draw [shift={(327.24,104.2)}, rotate = 122.74] [color={rgb, 255:red, 6; green, 6; blue, 6 }  ,draw opacity=1 ][line width=0.75]    (10.93,-3.29) .. controls (6.95,-1.4) and (3.31,-0.3) .. (0,0) .. controls (3.31,0.3) and (6.95,1.4) .. (10.93,3.29)   ;
\draw [color={rgb, 255:red, 6; green, 6; blue, 6 }  ,draw opacity=1 ]   (307.64,110.2) -- (309.01,112.49) ;
\draw [shift={(310.04,114.2)}, rotate = 239.04] [color={rgb, 255:red, 6; green, 6; blue, 6 }  ,draw opacity=1 ][line width=0.75]    (10.93,-3.29) .. controls (6.95,-1.4) and (3.31,-0.3) .. (0,0) .. controls (3.31,0.3) and (6.95,1.4) .. (10.93,3.29)   ;
\draw [color={rgb, 255:red, 6; green, 6; blue, 6 }  ,draw opacity=1 ]   (325.24,171.4) -- (326.61,173.69) ;
\draw [shift={(327.64,175.4)}, rotate = 239.04] [color={rgb, 255:red, 6; green, 6; blue, 6 }  ,draw opacity=1 ][line width=0.75]    (10.93,-3.29) .. controls (6.95,-1.4) and (3.31,-0.3) .. (0,0) .. controls (3.31,0.3) and (6.95,1.4) .. (10.93,3.29)   ;

\draw (228.49,140.16) node [anchor=north west][inner sep=0.75pt]    {$k_{1}$};
\draw (390.33,138.4) node [anchor=north west][inner sep=0.75pt]    {$k_{2}$};
\draw (301.75,139.4) node [anchor=north west][inner sep=0.75pt]    {$0$};
\draw (460.5,128.4) node [anchor=north west][inner sep=0.75pt]    {$\mathrm{Re} k$};
\end{tikzpicture}
\caption{The jump lines $\Sigma^{(E)}$ for $E(k)$.}
\label{Elin}
\end{figure}

Considering the error between $M^R(k)$ and $M^{lo}(k)$, we define a new function as follows:
\begin{equation}\label{trs4}
 E(k)\equiv E(y,t,k)=\left\{\begin{array}{ll}
M^R(k),&k\in \mathbb{C}\setminus U,\\
M^R(k)(M^{lo}(k))^{-1},&k\in U.
\end{array}\right.
\end{equation}
Then $E(k)$ is a solution of the new RH problem:
\begin{rhp}\label{rhpe}
Find a $2\times 2$ matrix-valued function $E(k)$ with following properties:
\begin{itemize}
  \item Analyticity: $E(k)$ is analytical in $\mathbb{C}\setminus\Sigma^{(E)}$, where
      $
      \Sigma^{(E)}=\partial U  \cup (\Sigma\setminus U)
     $
      as shown in Figure \ref{Elin}$;$
  \item Jump condition: $E(k)$ has continuous boundary values $E_{\pm}(k)$ on $\Sigma^{(E)}$, and
      \begin{equation}
      E_+(k)=E_-(k)V^{(E)}(k),
      \end{equation}
      where
      \begin{equation}\label{ve}
      V^{(E)}(k)=\left\{\begin{array}{ll}
      V^{(2)},&k\in\Sigma\setminus U,\\
      M^{lo}(k),&k\in \partial U;
      \end{array}\right.
      \end{equation}
  \item Asymptotic behavior:
$
  E(k)\sim I+\mathcal{O}(k^{-1}),\quad k\to\infty.
$
\end{itemize}
\end{rhp}
Using Proposition \ref{propp4}, we get the estimates of $V^{(E)}(k)$ as follows:
\begin{equation}
\| V^{(E)}(k)-I\|_p\lesssim\left\{\begin{array}{ll}
e^{-c_1(p)t},&k\in \Sigma_{jl}\setminus U,\ j=1,2,\\
e^{-c_2(p)t},&k\in \Sigma_{0l},\ l=1,2,3,4.
\end{array}\right.
\end{equation}
For $k\in\partial U$, by \eqref{mlo} and \eqref{ve}, we have
\begin{equation}
|V^{(E)}(k)-I|=|M^{lo}(k)-I|=\mathcal{O}(t^{-1/2}).
\end{equation}
Therefore, based on the Beals-Coifman theorem, by the same method in Section 3, the solution of the RH problem \ref{rhpe} is represented by
\begin{equation}
E(k)=I+\frac{1}{2\pi i}\int_{\Sigma_E}\frac{\mu_E(s)(V^{(E)}(s)-I)}{s-k}\mathrm{d}s,
\end{equation}
where $\mu_E\in L^2(\Sigma^{(E)})$ satisfies
\begin{equation*}
(1-\mathcal{C}_{\omega_E})\mu_E=I,
\end{equation*}
and the operator $\mathcal{C}_{\omega_E}$ is defined by
\begin{equation*} \mathcal{C}_{\omega_E}f=\mathcal{C}_-(f(V^{(E)}-I))
\end{equation*}
with the estimate:
\begin{equation*}
\| \mathcal{C}_{\omega_E}\|_{L^2(\Sigma^{(E)})}\leqslant\| \mathcal{C}_-\|_{L^2(\Sigma^{(E)})}
\| V^{(E)}-I\|_{L^{\infty}(\Sigma^{(E)})}\leqslant\mathcal{O}(t^{-1/2}),
\end{equation*}
which guarantees the existence and uniqueness of $\mu_E$ and $M^{(E)}$.
We finally give the asymptotic expansion of $E(k)$ at $k=0$:
\begin{proposition}\label{estiE}
As $k\to 0$, we have
\begin{equation}
E(k)=E_0+E_1k+E_2k^2+\mathcal{O}(k^3),
\end{equation}
where
\begin{eqnarray}
&&E_0=E(0)=I+\frac{1}{2\pi i}\int_{\Sigma^{(E)}}\frac{\mu_E(s)(V^{(E)}(s)-I)}{s}\mathrm{d}s,\nonumber\\
&&E_1=-\frac{1}{2\pi i}\int_{\Sigma^{(E)}}\frac{\mu_E(s)(V^{(E)}(s)-I)}{s^2}\mathrm{d}s,\quad E_2=\frac{1}{2\pi i}\int_{\Sigma^{(E)}}\frac{\mu_E(s)(V^{(E)}(s)-I)}{s^3}\mathrm{d}s.\nonumber
\end{eqnarray}
Besides, $E_j,\ j=0,1,2$ satisfies following long-time asymptotic behavior:
\begin{eqnarray}
&&E_0-I=t^{-\frac{1}{2}}H_0+\mathcal{O}(t^{-1}),\label{E0}\\
&&E_j=t^{-\frac{1}{2}}H_j+\mathcal{O}(t^{-1}),\label{E12}\ j=1,2,
\end{eqnarray}
and by the
residue theorem, we write them out as:
\begin{equation}
H_j=(-1)^j\frac{\rho_0^{3/2}i}{2}\sum_{l=1}^2\frac{B_l}{k_l^{j+1}}.
\end{equation}
\end{proposition}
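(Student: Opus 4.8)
The plan is to prove the two assertions in turn: the closed forms for the Taylor coefficients $E_0,E_1,E_2$, and then their $t^{-1/2}$ leading behaviour. Since $k_1<0<k_2$ and the disks $U_1,U_2$ are centred at the stationary points, the origin lies a fixed positive distance from $\Sigma^{(E)}$, so $E$ is analytic in a neighbourhood of $k=0$. I would therefore read off the coefficients directly from the resolvent representation
\[
E(k)=I+\frac{1}{2\pi i}\int_{\Sigma^{(E)}}\frac{\mu_E(s)\,(V^{(E)}(s)-I)}{s-k}\,\mathrm{d}s
\]
by expanding the Cauchy kernel about $k=0$; matching powers of $k$ gives precisely the three displayed formulas for $E_0,E_1,E_2$, each integral converging because $s^{-(j+1)}$ is bounded on $\Sigma^{(E)}$ while $\mu_E(V^{(E)}-I)\in L^{2}(\Sigma^{(E)})$.

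For the time decay I would start from the small-norm bound $\|\mathcal{C}_{\omega_E}\|_{L^{2}(\Sigma^{(E)})}=\mathcal{O}(t^{-1/2})$ established just before the statement; the Neumann series for $(1-\mathcal{C}_{\omega_E})^{-1}$ then yields $\mu_E=I+\mathcal{O}(t^{-1/2})$ in $L^{2}(\Sigma^{(E)})$. I would next split each coefficient integral over $\partial U$ and over $\Sigma\setminus U$. On $\Sigma\setminus U$, Proposition \ref{propp4} gives $\|V^{(E)}-I\|=\mathcal{O}(e^{-ct})$ with the kernel bounded, so this part is $\mathcal{O}(e^{-ct})\subset\mathcal{O}(t^{-1})$. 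On $\partial U$ one has $V^{(E)}=M^{lo}$, and replacing $\mu_E$ by $I$ costs only $\int_{\partial U}|\mu_E-I|\,|M^{lo}-I|\,|s|^{-(j+1)}\,|\mathrm{d}s|\lesssim\|\mu_E-I\|_{L^{2}}\,\|M^{lo}-I\|_{L^{2}}=\mathcal{O}(t^{-1})$ by Cauchy--Schwarz. Thus, modulo $\mathcal{O}(t^{-1})$, every coefficient reduces to the pure contour integral of $(M^{lo}-I)\,s^{-(j+1)}$ over $\partial U$.

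Finally I would insert the parametrix expansion \eqref{mlo}, namely $M^{lo}(s)-I=t^{-1/2}\tfrac{\rho_0^{3/2}i}{2}\sum_{l=1}^{2}B_l/(s-k_l)+\mathcal{O}(t^{-1})$, whose remainder again integrates to $\mathcal{O}(t^{-1})$. Each circle $\partial U_l$ encloses only the simple pole $s=k_l$, since $s=0$ and $s=k_{l'}$ ($l'\neq l$) lie outside $U_l$, so the residue theorem gives $\frac{1}{2\pi i}\oint_{\partial U_l}\frac{\mathrm{d}s}{s^{j+1}(s-k_l)}=k_l^{-(j+1)}$. Summing over $l$ and carrying along the sign $(-1)^{j}$ attached to the $j$-th coefficient integral produces exactly $H_j=(-1)^{j}\tfrac{\rho_0^{3/2}i}{2}\sum_{l=1}^{2}B_l\,k_l^{-(j+1)}$, i.e. \eqref{E0}--\eqref{E12}. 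I expect the second paragraph to be the main obstacle: one must verify that the small-norm solvability and the bound $\mu_E=I+\mathcal{O}(t^{-1/2})$ hold in a norm strong enough that, combined with the $L^{2}$ smallness of $M^{lo}-I$ on the fixed compact contour $\partial U$, every discarded cross term is genuinely $\mathcal{O}(t^{-1})$ uniformly across the cone $\xi<0$; once this is in place the residue evaluation, with $\partial U$ positively oriented, is routine.
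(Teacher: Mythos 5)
Your second and third paragraphs reproduce, in substance, the paper's own proof: the same splitting $\mu_E(V^{(E)}-I)=(\mu_E-I)(V^{(E)}-I)+(V^{(E)}-I)$, Cauchy--Schwarz plus the small-norm bound for the cross term, exponential smallness off $U$, and insertion of \eqref{mlo} followed by residues at $s=k_l$ on $\partial U$. The genuine gap is in your first paragraph, and it is not cosmetic: the premise that the origin lies at a fixed positive distance from $\Sigma^{(E)}$ is false. By the construction in Proposition \ref{Rjl}, the domain $\Omega_{11}$ is bounded by the ray $\Sigma_{11}$, the real interval $\Gamma_{11}=(k_1,0)$, and the arc $\Sigma_{02}$ (Figure \ref{lin1}); that arc --- and each of the four arcs $\Sigma_{0l}$ --- therefore terminates at $k=0$, and since these arcs lie in $\Sigma\setminus U$, the origin is a limit point of $\Sigma^{(E)}$. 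Consequently $E$ is not analytic in a full neighbourhood of $0$, and $s^{-(j+1)}$ is \emph{not} bounded on $\Sigma^{(E)}$. This invalidates, as written, your convergence argument for the three coefficient integrals, your claim that the $\Sigma\setminus U$ contribution is $\mathcal{O}(e^{-ct})$ ``with the kernel bounded'', and the global Cauchy--Schwarz estimate for the cross term: near $s=0$ the factor $s^{-(j+1)}$ belongs to no dual Lebesgue class on the arcs, so the $L^p$ bounds of Proposition \ref{propp4} alone cannot close any of these steps.

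The missing idea --- and the reason the contour is drawn into the origin in the first place --- is the pointwise decay of the jump there: on $\Sigma_{0l}$ the jump entry is $\pm R_{jl}e^{\pm 2it\theta}$ with $R_{jl}$ bounded, and by \eqref{Imthta} one has $\mathrm{Im}\,\theta(s)=\mathrm{Im}(s)\bigl(\xi+\tfrac{1}{2|s|^2}\bigr)\gtrsim |s|^{-1}$ as $s\to 0$ along the arcs, since they approach $0$ non-tangentially to $\mathbb{R}$ and stay inside $|s|<\rho_0$. Hence $|V^{(E)}(s)-I|\lesssim e^{-ct/|s|}$, which beats every power of $|s|^{-1}$; this makes all coefficient integrals absolutely convergent, shows the arcs contribute $\mathcal{O}(e^{-ct})$ to each $E_j$, and legitimises an asymptotic expansion at a point lying on the closure of the jump contour. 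With that supplied, your outline does go through and coincides with the paper's computation. Two further points you inherit from the statement rather than resolve: expanding the Cauchy kernel gives $E_j=+\frac{1}{2\pi i}\int_{\Sigma^{(E)}}\mu_E(s)(V^{(E)}(s)-I)s^{-(j+1)}\mathrm{d}s$ with a plus sign for \emph{every} $j$, so your two claims --- that matching powers yields ``precisely the displayed formulas'' (which carry a minus sign on $E_1$) and that a sign $(-1)^j$ is attached to the $j$-th integral --- cannot both hold; and the residue values depend on the orientation of $\partial U$, which under the conventions \eqref{trs4} and \eqref{ve} (jump $M^{lo}$ with the ``$+$'' side exterior) is the clockwise one, reversing the sign that both you and the paper assign. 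A fully correct proof has to fix these conventions explicitly rather than carry them along.
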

\begin{proof}
The proof of \eqref{E0} and \eqref{E12} are similar, so we only give the details for the first one.
Notice that
\begin{equation*}
\frac{1}{s-k}=\frac{1}{s}\sum_{n=0}^{\infty}(\frac{k}{s})^n,
\end{equation*}
thus
\begin{equation*}
\begin{aligned}
&E_0-I=\frac{1}{2\pi i}\int_{\Sigma^{(E)}}\frac{\mu_E(V^{(E)}-I)}{s}\mathrm{d}s\\
=&\frac{1}{2\pi i}\int_{\Sigma^{(E)}}\frac{(\mu_E-I)(V^{(E)}-I)}{s}\mathrm{d}s+\frac{1}{2\pi i}\int_{\Sigma^{(E)}\setminus U}\frac{V^{(E)}-I}{s}\mathrm{d}s+\frac{1}{2\pi i}\int_{\partial U}\frac{V^{(E)}-I}{s}\mathrm{d}s\\
=&\frac{1}{2\pi i}\sum_{j=1}^2\int_{\partial U_j}t^{-\frac{1}{2}}\frac{i\rho_0^{\frac{3}{2}}B_j}{2(s-k_j)}s^{-1}\mathrm{d}s+\mathcal{O}(t^{-1})=\frac{i\rho_0^{\frac{3}{2}}}{2}\sum_{j=1}^2\frac{B_j}{k_j}t^{-\frac{1}{2}}+\mathcal{O}(t^{-1})\\
=&H_0t^{-\frac{1}{2}}+\mathcal{O}(t^{-1}).
\end{aligned}
\end{equation*}
\end{proof}

\subsection{Analysis on the pure $\bar{\partial}$-problem}\label{3.4}

The solution $M^{(3)}(k)$ of the $\bar{\partial}$-problem \ref{M3} is given by the integral equation:
\begin{equation}\label{M3eq}
M^{(3)}(k)=I+\frac{1}{\pi}\iint_{\mathbb{C}}\frac{M^{(3)}(s)W^{(3)}(s)}{s-k}\mathrm{d}A(s).
\end{equation}
Denote the integral operator $S^{(3)}$ as
\begin{equation*}
S^{(3)}[f](k)=\frac{1}{\pi}\iint_{\mathbb{C}}
\frac{f(s)W^{(3)}(s)}{s-k}\mathrm{d}A(s).
\end{equation*}
Then, \eqref{M3eq} equals to
\begin{equation}
(I-S^{(3)})M^{(3)}(k)=I.
\end{equation}
To prove the existence of $(I-S^{(3)})^{-1}$, we need the following proposition.
\begin{proposition}\label{s-op}
As $t\to\infty$, the norm of the integral operator $S^{(3)}$ satisfies the estimate:
\begin{equation}
\| S^{(3)}\|_{L^{\infty}\to L^{\infty}}\lesssim t^{-1/2}.
\end{equation}
\end{proposition}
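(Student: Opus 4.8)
The plan is to estimate the operator norm directly from its integral kernel. Since
\begin{equation*}
S^{(3)}[f](k)=\frac{1}{\pi}\iint_{\mathbb{C}}\frac{f(s)W^{(3)}(s)}{s-k}\mathrm{d}A(s),
\end{equation*}
one has $\|S^{(3)}[f]\|_{L^{\infty}}\leq\|f\|_{L^{\infty}}\,\sup_{k\in\mathbb{C}}\frac{1}{\pi}\iint_{\mathbb{C}}\frac{|W^{(3)}(s)|}{|s-k|}\mathrm{d}A(s)$, so it suffices to show the supremum is $\mathcal{O}(t^{-1/2})$. First I would reduce this to a scalar estimate. Because $M^R$ solves a RH problem whose jump is $I+\mathcal{O}(t^{-1/2})$ with normalization $I+\mathcal{O}(k^{-1})$ (indeed $M^R=E\,M^{lo}$ on $U$ with $E,M^{lo}=I+\mathcal{O}(t^{-1/2})$ by \eqref{mlo}), both $M^R$ and $(M^R)^{-1}$ are bounded uniformly in $k$ and in large $t$. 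Hence $|W^{(3)}(s)|=|M^R\,\bar{\partial}R^{(2)}(M^R)^{-1}|\lesssim|\bar{\partial}R^{(2)}(s)|$, and since $\bar{\partial}R^{(2)}$ is supported on $\Omega=\bigcup_{j,l}\Omega_{jl}$ with all pieces of identical structure by the symmetry \eqref{symt}, it is enough to bound a single contribution, say $\iint_{\Omega_{11}}\frac{|\bar{\partial}R_{11}(s)|\,|e^{2it\theta}|}{|s-k|}\mathrm{d}A(s)$.

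The second step is to convert the oscillation $e^{2it\theta}$ into genuine decay. Writing $s-k_1=u+\mathrm{i}v=\rho e^{\mathrm{i}\alpha}$ with $u,v\geq0$ and using the local expansion $\theta=-\frac{1}{k_1}-\frac{1}{k_1^3}(s-k_1)^2+\mathcal{O}((s-k_1)^3)$ together with Lemma \ref{Imk}, one finds on $\Omega_{11}$ that $\mathrm{Im}\,\theta\gtrsim\rho^2\sin(2\alpha)=2uv$, whence $|e^{2it\theta}|\lesssim e^{-c\,t\,uv}$ for some $c>0$. This Gaussian factor in the product $uv$ is the mechanism that ultimately supplies the power of $t$.

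Third, I would insert the bounds of Proposition \ref{Rjl}. By \eqref{pro31}, $|\bar{\partial}R_{11}|\lesssim\varphi(\mathrm{Re}\,s)+|s-k_1|^{-1/2}+|r'(\mathrm{Re}\,s)|$, which splits the integral into three pieces. For each piece I would fix $v$ and apply H\"older's inequality in the variable $u$: the factors $\varphi$ and $r'$ lie in $L^2(\mathbb{R})\cap L^{\infty}(\mathbb{R})$ (the latter inclusion from Remark \ref{rmk1}, since $r\in H^4\hookrightarrow C^3$), the Cauchy factor obeys $\|(s-k)^{-1}\|_{L^p_u}\lesssim|v-\mathrm{Im}\,k|^{1/p-1}$ on the bounded region $\Omega_{11}$, and $|s-k_1|^{-1/2}\approx u^{-1/2}$ is locally integrable. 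The remaining integration in $v$ is then carried out against $e^{-ctuv}$; a scaling of the transverse variable $v$ converts this Gaussian into the desired negative power of $t$, and summing over the finitely many regions $\Omega_{jl}$ yields the bound $\|S^{(3)}\|_{L^{\infty}\to L^{\infty}}\lesssim t^{-1/2}$.

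The main obstacle I expect is obtaining the decay rate \emph{uniformly} in the external variable $k$. The Cauchy kernel $(s-k)^{-1}$ becomes singular precisely when $k$ lies in (or near) the support $\Omega$, and the most delicate configuration is $k\approx k_1$, where this singularity multiplies the factor $|s-k_1|^{-1/2}$ of $\bar{\partial}R_{11}$; the H\"older exponent $p$ must be chosen so that $\|(s-k)^{-1}\|_{L^p_u}$ stays finite while the transverse integration still delivers the full rate, and it is this balancing that forces the restriction $p>2$ appearing throughout the analysis. A second delicate point is the spectral singularity $k=0$, where both $(s-k)^{-1}$ and the stationary-phase geometry degenerate; here the special construction of $R_{jl}$ in Proposition \ref{Rjl}, which guarantees $|\bar{\partial}R_{jl}|\lesssim|k|^2$ by \eqref{pro33}, ensures integrability of the kernel near the origin and prevents the estimate from deteriorating.
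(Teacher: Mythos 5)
Your overall skeleton is the same as the paper's: bound $\|S^{(3)}\|_{L^\infty\to L^\infty}$ by $\sup_k\frac{1}{\pi}\iint|W^{(3)}(s)|\,|s-k|^{-1}\mathrm{d}A(s)$, use boundedness of $M^R,(M^R)^{-1}$ to reduce to $|\bar{\partial}R^{(2)}|$, split via \eqref{pro31}, then H\"older in $u$ followed by the transverse $v$-integration. The genuine gap is quantitative and sits exactly at the step you wave through ("a scaling of the transverse variable converts this Gaussian into the desired negative power of $t$"): with the Gaussian input $|e^{2it\theta}|\lesssim e^{-ctuv}$, the computation you outline yields $t^{-1/4}$, not $t^{-1/2}$. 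Concretely, on $\Omega_{11}$ one has $0<v<u$, so the best pointwise use of your bound is $e^{-ctuv}\le e^{-ctv^2}$, and H\"older gives
\begin{equation*}
\int_v^\infty\frac{|s-k_1|^{-1/2}}{|s-k|}\,e^{-ctuv}\,\mathrm{d}u\ \lesssim\ e^{-ctv^2}\,v^{\frac1p-\frac12}\,|v-\eta|^{\frac1q-1},\qquad \tfrac1p+\tfrac1q=1,\ p>2,
\end{equation*}
where the exponents sum to $(\frac1p-\frac12)+(\frac1q-1)=-\frac12$; in the worst case $\eta\approx 0$ the $v$-integral is $\int_0^\infty e^{-ctv^2}v^{-1/2}\mathrm{d}v\sim t^{-1/4}$. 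This is not an artifact of a clumsy choice of $p,q$: taking $k=k_1$, the quantity you must bound contains $\iint_{0<v<u}(u^2+v^2)^{-3/4}e^{-ctuv}\mathrm{d}u\,\mathrm{d}v$, which in polar coordinates is genuinely of order $t^{-1/4}$ (restricting to $\rho\le t^{-1/2}$, $\pi/8\le\alpha\le\pi/4$ already gives a lower bound $\gtrsim t^{-1/4}$). The same cap occurs for the $r'$ piece: Cauchy--Schwarz gives $e^{-ctv^2}|v-\eta|^{-1/2}$, again integrating to $t^{-1/4}$. So your argument, carried out honestly, proves $\|S^{(3)}\|_{L^\infty\to L^\infty}\lesssim t^{-1/4}$ and cannot reach the stated $t^{-1/2}$.

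The paper reaches $t^{-1/2}$ by a different, stronger decay claim: citing Lemma \ref{Imk}, it bounds the exponential on all of $\Omega_{11}$ by the \emph{linear} factor $e^{2\xi tv}=e^{-2|\xi|tv}$, and with $e^{-ctv}$ in place of $e^{-ctv^2}$ the identical H\"older computation gives $\int_0^\infty e^{-ctv}v^{1/p-1/2}|v-\eta|^{1/q-1}\mathrm{d}v\lesssim t^{-1/2}$, precisely because the exponents sum to $-\frac12$. So to prove the proposition as stated you must justify linear-in-$v$ decay of the phase on the whole support of $\bar{\partial}R^{(2)}$, which is what the paper asserts; note this is delicate, since $\mathrm{Im}\,\theta=v\bigl(\xi+\frac{1}{2|s|^2}\bigr)\ge|\xi|v$ forces $|s|\le\rho_0/\sqrt2$ and hence degenerates near $k_1$ --- indeed your Gaussian bound is the one the paper itself invokes near $k_1$ in \eqref{boundedim}, but there it is used for Proposition \ref{estiM3}, not here. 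Finally, a side remark: your concern about $k=0$ is not relevant to this proposition. Near the origin $\mathrm{Im}\,\theta\approx v/(2|s|^2)$, so the exponential decay improves rather than degenerates, and the two-dimensional Cauchy kernel $|s-k|^{-1}$ is locally integrable without any vanishing of $\bar{\partial}R_{jl}$; the estimate \eqref{pro33} is needed only for the $s^{-3}$ kernel in the expansion coefficient $M_2^{(3)}$ of Proposition \ref{estiM3}.
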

\begin{proof}
We only give the proof of $k\in \Omega_{11}$. Define a new function $f(k)\in L^{\infty}(\Omega_{11})$, then
\begin{equation*}
\begin{aligned}
|S(f)|&\leqslant\frac{1}{\pi}\iint_{\Omega_{11}}\frac{|f(s)M^R(s)\bar{\partial}R^{(2)}(s)(M^R(s))^{-1}
      |}{|s-k|}\mathrm{d}A(s)\\
      &\leqslant\| f\|_{L^{\infty}}\frac{1}{\pi}\iint_{\Omega_{11}}\frac{|\bar{\partial}R_{12}e^{-2it\theta}|}{|s-k|}\mathrm{d}A(s)\lesssim I_1+I_2+I_3,
\end{aligned}
\end{equation*}
where
\begin{eqnarray}
&&I_1=\iint_{\Omega_{11}}\frac{|\varphi(\mathrm{Re}s)e^{-2it\theta}|}{|s-k|}\mathrm{d}A(s),\ I_2=\iint_{\Omega_{11}}\frac{|r'(\mathrm{Re}s)e^{-2it\theta}|}{|s-k|}\mathrm{d}A(s),\nonumber\\
&&I_3=\iint_{\Omega_{11}}\frac{|s-k_1|^{-\frac{1}{2}}|e^{-2it\theta}|}{|s-k|}\mathrm{d}A(s).\nonumber
\end{eqnarray}
Denote $s=k_1+u+iv,\ k=\alpha+i\eta$, by Lemma \ref{Imk} and $\varphi\in C_0^{\infty}$, we have
\begin{equation*}
I_1\leqslant\int_0^{\infty}\int_{v}^{\infty}\frac{|\varphi(u)|}{|s-k|}e^{2\xi tv}\mathrm{d}u\mathrm{d}v\leqslant\int_0^{\infty}e^{-ctv}\| \varphi\|_{L^2}\| (s-k)^{-1}\|_{L^2}\mathrm{d}v,
\end{equation*}
where
\begin{equation}\label{s-k}
\begin{aligned}
\| (s-k)^{-1}\|_{L^2(v,\infty)}&\leqslant\left(\int_{-\infty}^{\infty}\frac{1}{(k_1+u-\alpha)^2+(v-\eta)^2}\mathrm{d}u\right)^{1/2}\\
&=\left(\frac{1}{|v-\eta|}\int_{-\infty}^{\infty}\frac{1}{1+y^2}\mathrm{d}y\right)^{1/2}=\left(\frac{\pi}{|v-\eta|}\right)^{1/2}
\end{aligned}
\end{equation}
with $y=\frac{k_1+u-\alpha}{v-\eta}$. Thus
\begin{equation*}
I_1\lesssim\int_0^{\infty}\frac{e^{-ctv}}{\sqrt{|v-\eta|}}\mathrm{d}v\lesssim t^{-1/2}.
\end{equation*}
The estimate of $I_2$ is just the same as $I_1$ because $r'(k)\in L^2(\mathbb{R})$, so we get
$
I_2\lesssim t^{-1/2}.
$

Finally, we deal with $I_3$. We first give the proof of the estimates as follows: for $2< p<\infty$ and $q$ satisfying $\frac{1}{p}+\frac{1}{q}=1$, we have
\begin{equation}\label{s-k1}
\| \frac{1}{\sqrt{|s-k_1|}}\|_{L^p(v,\infty )}=\left(\int_{v}^{\infty}\frac{1}{|u+iv|^{p/2}}\mathrm{d}u\right)^{1/p}
=v^{\frac{1}{p}-\frac{1}{2}}\left(\int_{1}^{\infty}\frac{1}{(1+y^2)^{p/4}}\mathrm{d}y\right)^{1/p}\lesssim v^{\frac{1}{p}-\frac{1}{2}},
\end{equation}
and by the same method as \eqref{s-k}, we find
\begin{equation}\label{s-k2}
\| (s-k)^{-1}\|_{L^q(v,\infty)}\lesssim|v-\eta|^{1/q-1}.
\end{equation}
So
\begin{equation}
I_3\leqslant\int_0^{+\infty}e^{-ctv}\int^{\infty}_{v}\frac{|s-1|^{-1/2}}{|s-k|}\mathrm{d}u\mathrm{d}v
\lesssim\int_0^{+\infty}e^{-ctv}v^{\frac{1}{p}-\frac{1}{2}}|v-\eta|^{\frac{1}{q}-1}\mathrm{d}v\lesssim t^{-1/2}.
\end{equation}
\end{proof}
Therefore, the solution of $\bar{\partial}$-problem \ref{M3} is unique and obeys the formula \eqref{M3eq}. Then we have the following asymptotic estimation of $M^{(3)}(k)$.
\begin{proposition}\label{estiM3}
As $k \rightarrow 0$, $M^{(3)}(k)$ has the asymptotic expansion:
\begin{equation}
M^{(3)}(k)=M^{(3)}(0)+M^{(3)}_1k+M^{(3)}_2k^2+O(k^3),
\end{equation}
where
\begin{equation}\label{m30}
|M^{(3)}(0)-I|=|\frac{1}{\pi}\iint_{\mathbb{C}}
\frac{M^{(3)}(s)W^{(3)}(s)}{s}\mathrm{d}A(s)|\lesssim t^{-1},
\end{equation}
and
\begin{equation}
M^{(3)}_1=-\frac{1}{\pi}\iint_{\mathbb{C}}
\frac{M^{(2)}(s)W^{(3)}(s)}{s^2}\mathrm{d}A(s),\quad
M^{(3)}_2=-\frac{1}{\pi}\iint_{\mathbb{C}}
\frac{M^{(2)}(s)W^{(3)}(s)}{s^3}\mathrm{d}A(s)
\end{equation}
satisfying
\begin{equation}\label{m31}
|M^{(3)}_1|\lesssim t^{-1},\ as\ t\rightarrow \infty,
\end{equation}
and for $2<p<\infty$,
\begin{equation}\label{M32}
|M^{(3)}_2|\lesssim t^{-1+\frac{1}{2p}},\ as\ t\rightarrow \infty.
\end{equation}
\end{proposition}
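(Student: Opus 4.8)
The plan is to read the three Taylor coefficients straight off the integral equation \eqref{M3eq} and to bound each resulting double integral region by region, the genuinely new input being the near-origin decay \eqref{pro33} of $\bar{\partial}R^{(2)}$. First I would record that Proposition \ref{s-op} gives $\|S^{(3)}\|_{L^{\infty}\to L^{\infty}}\lesssim t^{-1/2}$, so that $(I-S^{(3)})^{-1}$ exists for large $t$ and $\|M^{(3)}\|_{L^{\infty}(\mathbb{C})}\lesssim 1$. Expanding the Cauchy kernel as $\frac{1}{s-k}=\frac{1}{s}+\frac{k}{s^2}+\frac{k^2}{s^3}+\mathcal{O}(k^3)$ inside \eqref{M3eq} and collecting powers of $k$ identifies $M^{(3)}(0)-I$, $M^{(3)}_1$ and $M^{(3)}_2$ with the stated integrals $\frac{1}{\pi}\iint_{\mathbb{C}}\frac{M^{(3)}W^{(3)}}{s^{j+1}}\,\mathrm{d}A$, $j=0,1,2$; the uniform $L^{\infty}$ bound on $M^{(3)}$ legitimizes the termwise integration.

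Next I would reduce every coefficient to a scalar estimate. Since $M^R$ and $(M^R)^{-1}$ are bounded by \eqref{mlo} and Proposition \ref{estiE}, one has $|W^{(3)}|\lesssim|\bar{\partial}R^{(2)}|\,|e^{\pm 2it\theta}|$, so it suffices to bound $\iint_{\Omega_{jl}}\frac{|\bar{\partial}R_{jl}|\,|e^{\pm 2it\theta}|}{|s|^{j+1}}\,\mathrm{d}A$. I would split $\Omega$ into a fixed neighbourhood of $s=0$ and its complement. Off the origin the weight $|s|^{-(j+1)}$ is bounded uniformly in $j$ (in particular on $U_1\cup U_2$, where $|s|\approx\rho_0$), so the singular bound $|\bar{\partial}R_{jl}|\lesssim|s-k_j|^{-1/2}+|r'(\mathrm{Re}\,s)|$ from \eqref{pro31}--\eqref{pro32} combined with the sign and quadratic vanishing of $\mathrm{Im}\theta$ at $k_j$ (Lemma \ref{Imk}) lets the $L^p$--$L^q$ argument of Proposition \ref{s-op} run verbatim and contribute the base time decay for all three coefficients. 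The only $j$-dependent effect sits at $s=0$: there \eqref{pro33} supplies $|\bar{\partial}R_{jl}|\lesssim|s|^2$ on $\Omega_{j1}\cup\Omega_{j4}$, so the integrand is controlled by $|s|^{2}/|s|^{j+1}=|s|^{1-j}$, equal to $|s|$ for $j=0$ and to $1$ for $j=1$. Both are integrable with room to spare, so $M^{(3)}(0)-I$ and $M^{(3)}_1$ inherit the clean $\mathcal{O}(t^{-1})$ bounds \eqref{m30}--\eqref{m31}.

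The hard case is $M^{(3)}_2$, where at the origin \eqref{pro33} only lowers $|s|^{-3}$ to the borderline $|s|^{-1}$. This factor is locally integrable on $\mathbb{R}^2$ but sits exactly at the logarithmic threshold, so no direct estimate yields a full power of $t$. I would isolate the part of $\Omega_{j1}\cup\Omega_{j4}$ near $s=0$ and estimate $\iint\frac{|e^{\pm 2it\theta}|}{|s|}\,\mathrm{d}A$ by Hölder, placing $|s|^{-1}$ in $L^{p}$ and the remaining oscillatory factor in $L^{q}$ with $\tfrac1p+\tfrac1q=1$, exactly as for the term $I_3$ in the proof of Proposition \ref{s-op}; the tail integrability of $|s|^{-1}$ in this splitting forces $p>2$, and balancing the surviving $v$-integral against the decay $e^{-ctv}$ converts the logarithmic loss into the clean power $t^{-1+\frac{1}{2p}}$ of \eqref{M32}. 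Thus the entire degradation relative to $t^{-1}$ stems from the residual $|s|^{-1}$ singularity at the spectral point $k=0$ — precisely the effect that the special construction of $R_{jl}$ in Proposition \ref{Rjl} is designed to soften but cannot eliminate for the $k^2$-coefficient. I expect this Hölder balancing near $s=0$ to be the main obstacle; the stationary-point and bulk contributions are a routine reprise of Proposition \ref{s-op}.
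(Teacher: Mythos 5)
Your skeleton coincides with the paper's proof: expand the Cauchy kernel in \eqref{M3eq} to identify the three coefficients, reduce to scalar integrals of $|\bar{\partial}R_{jl}|\,|e^{\pm 2it\theta}|\,|s|^{-(j+1)}$ using the boundedness of $M^{(3)}$, $M^R$ and $(M^R)^{-1}$, isolate a neighborhood of $s=0$ where \eqref{pro33} applies, and treat the residual $|s|^{-1}$ singularity of the $k^2$-coefficient by H\"older. The genuine gap is in your off-origin step. You assert that the $L^p$--$L^q$ argument of Proposition \ref{s-op} ``runs verbatim'' there and supplies the base decay; but Proposition \ref{s-op} yields only $t^{-1/2}$, while \eqref{m30}--\eqref{m31} require $t^{-1}$, so a verbatim citation cannot close the proof. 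Moreover the argument literally cannot be repeated: in Proposition \ref{s-op} the terms $\varphi$ and $|r'|$ are paired in $L^2$ against the Cauchy kernel, whose line norm $\| (s-k)^{-1}\|_{L^2}\sim |v-\eta|^{-1/2}$ (see \eqref{s-k}) is exactly what caps the decay at $t^{-1/2}$; once that kernel is replaced by the weight $|s|^{-(j+1)}$, which is bounded away from the origin, the corresponding $L^2$ pairing would be against a constant on an unbounded line and diverges. What the paper actually does (the terms $I^{(0)}_1$, $I^{(0)}_2$ and the integral over $\Omega'_{11}$) is to redo these estimates with different pairings: $\varphi$ and $r'$ are placed in $L^1$ against $e^{-ctv}$ in $L^{\infty}$, giving $\int_0^{\infty}e^{-ctv}\mathrm{d}v\lesssim t^{-1}$, and the $|s-k_j|^{-1/2}$ term is H\"older-paired against the bilinear phase bound \eqref{boundedim}. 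This change of pairing is the entire source of the upgrade from $t^{-1/2}$ to $t^{-1}$, and it is precisely the step your proposal omits.

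A secondary inaccuracy concerns the mechanism you give for \eqref{M32}. Near $s=0$ you balance $\| s^{-1}\|_{L^p}\sim v^{1/p-1}$ against the decay $e^{-ctv}$; that computation produces $t^{-1/p}$, which for $p>2$ is weaker than the claimed $t^{-1+\frac{1}{2p}}$ (for instance $t^{-1/3}$ versus $t^{-5/6}$ at $p=3$). The paper's exponent arises from a different balancing: it uses $|e^{2it\theta}|\le e^{-c'_{\xi}tuv}$ and takes the $L^q$ norm in $u$ over $(v,\infty)$, which generates the prefactor $t^{-1/q}$ together with a Gaussian factor, and then $t^{-1/q}\int_0^{\rho_0/4}v^{1/p-1}e^{-c'_{\xi}tv^2}\mathrm{d}v\lesssim t^{-1/q-\frac{1}{2p}}=t^{-1+\frac{1}{2p}}$. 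Incidentally, the constraint $p>2$ does not come from ``tail integrability of $|s|^{-1}$'' (any $p>1$ works for that); it is inherited from \eqref{s-k1}, i.e.\ from the $L^p$ norm of $|s-k_j|^{-1/2}$ used at the stationary points. So the rate in \eqref{M32} is attainable, but not by the balancing you describe, and as written your near-origin estimate for $M^{(3)}_2$ falls short of the stated bound.
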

\begin{proof}
We take the case as $s\in \Omega_{11}$ for example. Considering the estimate \eqref{pro33} in Proposition \ref{Rjl}, we divide the integral region into $\Omega_{11}\cap B(0)$, $\Omega_{11}\cap B_{k_1}$ and $\Omega'_{11}:=\Omega_{11}\setminus (B(0)\cap B_{k_1})$, where $B(0)=\{k\mid|k|<\epsilon'_0<\rho_0/4\}$ satisfying
\begin{equation*}
|\bar{\partial}R_{11}|\lesssim|k|^2,\ for\ all\ k\in B(0),
\end{equation*}
and $B_{k_1}=\{k\mid |k-k_1|<\rho_0/4\}$.
Then
\begin{equation}\label{M30e}
|M^{(3)}(0)-I|_{\Omega_{11}}\lesssim(\iint_{\Omega_{11}\cap B(0)}+\iint_{\Omega_{11}\cap B_{k_1}}+\iint_{\Omega'_{11}})\frac{|\bar{\partial}R_{11}|e^{-ct\mathrm{Im}\theta}}{|s|}\mathrm{d}A(s),
\end{equation}
where the first part
\begin{equation}
\iint_{\Omega_{11}\cap B(0)}\frac{|\bar{\partial}R_{11}|e^{-ctv}}{|s|}\mathrm{d}A(s)\lesssim\iint_{\Omega_2\cap B(0)}e^{-ctv}\mathrm{d}A(s)\lesssim t^{-1}.
\end{equation}
For the second part, $|s|>\rho_0/2$, so we have
\begin{equation*}
\begin{aligned}
&\iint_{\Omega_{11}\cap B_{k_1}}\frac{|\bar{\partial}R_{11}|e^{-ctv}}{|s|}\mathrm{d}A(s)\\
\lesssim&\iint_{\Omega_{11}\cap B_{k_1}}(|\varphi(u)|+|r'(u)|)e^{-ctv}\mathrm{d}A(s)+\iint_{\Omega_{11}\cap B_{k_1}}|s-k_1|^{-1/2}e^{ct\mathrm{Im}\theta}\mathrm{d}A(s)
:=I^{(0)}_1+I^{(0)}_2.
\end{aligned}
\end{equation*}
Notice that $\varphi,\ r'(k)\in L^1(\mathbb{R})$, thus
\begin{equation}
I^{(0)}_1\lesssim\iint_{\Omega_{11}}(|\varphi(u)|+|r'(u)|)e^{-ctv}\mathrm{d}A(s)\lesssim\int_0^{+\infty}(\| \varphi\|_{L^1}+\| r'\|_{L^1})e^{-ctv}\mathrm{d}v\lesssim t^{-1}.
\end{equation}
As for $I^{(0)}_2$, based on the definition of the region $B_{k_1}$ and \eqref{Imthta}, we deduce that
\begin{equation}\label{boundedim}
\mathrm{Im}\theta=uv\left(\frac{\xi}{u}+\frac{1}{2u[(u+k_1)^2+v^2]}\right)\leqslant
c_{\xi}uv
\end{equation}
where $c_{\xi}$ is a constant independent with $u,v$. Therefore, by \eqref{s-k1}, we have
\begin{equation*}
\begin{aligned}
I^{(0)}_2&\lesssim \int_0^{+\infty}\| |s-k_1|^{-1/2}\|_{L^q}\| e^{-c_{\xi}tuv}\|_{L^p}\mathrm{d}v\lesssim t^{-\frac{1}{p}}\int_0^{+\infty}v^{\frac{2}{q}-\frac{1}{2}}e^{-ctv^2}\mathrm{d}v\\
&=t^{-\frac{1}{p}}(\int_0^1+\int_1^{+\infty})v^{\frac{2}{q}-\frac{1}{2}}e^{-ctv^2}\mathrm{d}v=t^{-\frac{1}{p}}(I^{(0)}_{21}+I^{(0)}_{22})
\end{aligned}
\end{equation*}
with
\begin{equation*}
I^{(0)}_{21}\lesssim t^{-\frac{1}{4}-\frac{1}{q}}\int_0^{\infty}y^{\frac{1}{q}-\frac{3}{4}}e^{-y}\mathrm{d}y\lesssim t^{-\frac{1}{4}-\frac{1}{q}},
\end{equation*}
and
\begin{equation*}
I^{(0)}_{22}\lesssim \int_1^{+\infty}v^{\frac{2}{q}-\frac{1}{2}}e^{-ctv}\mathrm{d}v\lesssim t^{-\frac{1}{2}-\frac{2}{q}}.
\end{equation*}
Therefore,
\begin{equation}
I^{(0)}_2\lesssim t^{-\frac{5}{4}}.
\end{equation}
For the last part of \eqref{M30e}, we have
\begin{equation}
\begin{aligned}
\iint_{\Omega'_{11}}\frac{|\bar{\partial}R_{11}|e^{-ct\mathrm{Im}\theta}}{|s|}\mathrm{d}A(s)&\lesssim
\int_0^{\infty}\int_v^{\infty}(|\varphi(u)|+|r'(u)|)e^{-ctv}\mathrm{d}u\mathrm{d}v+\int_{\frac{\rho_0}{4}}^{\infty}\int_v^{\infty}\frac{|s-k_1|^{-\frac{1}{2}}}{|s|}e^{-ctv}\mathrm{d}u\mathrm{d}v\\
&\lesssim \int_0^{\infty}e^{-ctv}(\| \varphi(u)\|_{L^1}+\| r'(u)\|_{L^1})\mathrm{d}v+\int_{\frac{\rho_0}{4}}^{\infty}e^{-ctv}v^{-1/2}\mathrm{d}v\lesssim t^{-1}.
\end{aligned}
\end{equation}
Summing up all above, we proved \eqref{m30}. The estimate \eqref{m31} is deduced similarly.

Finally, we give the proof of \eqref{M32}:
\begin{equation}
|M^{(3)}_2|\leqslant \iint_{\Omega_{11}\cap B(0)}\frac{e^{-ct\mathrm{Im}\theta}}{|s|}\mathrm{d}A(s)+(\iint_{\Omega_{11}\cap B_{k_1}}+\iint_{\Omega'_{11}})\frac{|\bar{\partial}R_{11}|e^{-ct\mathrm{Im}\theta}}{|s|}\mathrm{d}A(s),
\end{equation}
where the last two parts have been estimated to decay by the speed of at least $t^{-1}$ as $t\to\infty$. Now we only need to deal with the first item:

As for $|s|<\rho_0/4,$ there holds $|s-k_1|\leqslant \rho_0/4+|k_1|$. For $s\in \Omega_{11}\cap B(0)$, \eqref{boundedim} holds for some constant $c'_{\xi}$. Based on above, taking $p>2,\ k=0$ in \eqref{s-k2}, we find
\begin{equation}
\iint_{\Omega_{11}\cap B(0)}\frac{e^{-ct\mathrm{Im}\theta}}{|s|}\mathrm{d}A(s)\lesssim
\int_0^{\rho_0/4}\| s^{-1}\|_{L^p}\| e^{-c'_{\xi}tuv}\|_{L^q}\mathrm{d}v\lesssim t^{-\frac{1}{q}}\int_0^{\rho_0/4}|v|^{\frac{1}{p}-1}e^{-c'_{\xi}tv^2}\mathrm{d}v
\lesssim t^{-1+\frac{1}{2p}}.
\end{equation}
Thus, we conclude the proof.
\end{proof}

\subsection{Long-time asymptotics  for the HS equation  }
Finally, we construct the long-time asymptotic approximation for the solution   of the HS equation \eqref{mhs}. Inverting the transformations \eqref{trs1}, \eqref{trs2}, \eqref{trs3} and \eqref{trs4}, we have
\begin{equation}
\widehat{M}(k)=M^{(3)}(k)E(k)(R^{(2)})^{-1}(k)\delta^{\sigma_3}(k).
\end{equation}
We take $k\to 0$ out of $\Omega$ so that $R^{(2)}(k)=I$. Then by the results of Proposition \ref{estiE} and Proposition \ref{estiM3}, we obtain the asymptotic expansion of $\widehat{M}(k)$ as follows:
\begin{equation}
\begin{aligned}
\widehat{M}(k)=&M^{(3)}(k)E(k)\delta^{\sigma_3}=(I+\mathcal{O}(t^{-1+\frac{1}{2p}}))(E_0+E_1k+E_2k^2)(I+i\delta_1\sigma_3k-\frac{\delta_1^2}{2}I k^2)+\mathcal{O}(k^3)\\
=&(I+t^{-\frac{1}{2}}(H_0+H_1k+H_2k^2))
(I+i\delta_1\sigma_3k-\frac{\delta_1^2}{2}I k^2)+\mathcal{O}(t^{-1+\frac{1}{2p}})+\mathcal{O}(k^3),
\end{aligned}
\end{equation}
which implies that
\begin{eqnarray}
&&\widehat{M}_{1,1}(y,t)=i\delta_1-\frac{t^{-\frac{1}{2}}}{2}\rho_0^{3/2}f_{11}+\mathcal{O}(t^{-1}),\ \qquad
\widehat{M}_{2,1}(y,t)=-i\delta_1-\frac{t^{-\frac{1}{2}}}{2}\rho_0^{3/2}f_{12}+\mathcal{O}(t^{-1}),\qquad\qquad\nonumber\\
&&\widehat{M}_{1,2}(y,t)=-\frac{1}{2}\delta_1^2-\frac{t^{-\frac{1}{2}}}{2}\rho_0^{3/2}f_{21}+\mathcal{O}(t^{-1}),\quad
\widehat{M}_{2,2}(y,t)=-\frac{1}{2}\delta_1^2-\frac{t^{-\frac{1}{2}}}{2}\rho_0^{3/2}f_{22}+\mathcal{O}(t^{-1}),\nonumber
\end{eqnarray}
where
\begin{eqnarray}
&&f_{11}=\delta_1\sum_{j=1}^2\frac{\beta_{21}^j}{k_j}+i\sum_{j=1}^2\frac{\beta_{21}^j}{k_j^2},\label{f11}\\ &&f_{12}=\delta_1\sum_{j=1}^2\frac{\beta_{12}^j}{k_j}-i\sum_{j=1}^2\frac{\beta_{12}^j}{k_j^2},\\
&&f_{21}=\frac{i}{2}\delta_1^2\sum_{j=1}^2\frac{\beta_{21}^j}{k_j}-\delta_1\sum_{j=1}^2\frac{\beta_{21}^j}{k_j^2}-i\sum_{j=1}^2\frac{\beta_{21}^j}{k_j^3},\\
&&f_{21}=\frac{i}{2}\delta_1^2\sum_{j=1}^2\frac{\beta_{12}^j}{k_j}-\delta_1\sum_{j=1}^2\frac{\beta_{12}^j}{k_j^2}+i\sum_{j=1}^2\frac{\beta_{12}^j}{k_j^3}.
\end{eqnarray}
Therefore, by the reconstruction formula \eqref{rstr-u}, we conclude that
\begin{equation}
\begin{aligned}
u(x,t)&=\hat{u}(y(x,t),t)
=\widehat{M}_{1,1}\widehat{M}_{2,1}(y(x,t),t)+\widehat{M}_{1,2}(y(x,t),t)+\widehat{M}_{2,2}(y(x,t),t)\\
&=\hat{f}t^{-1/2}+\mathcal{O}(t^{-1+\frac{1}{2p}}),
\end{aligned}
\end{equation}
where
\begin{equation}\label{hatf}
\hat{f}=-\frac{\rho_0^{\frac{3}{2}}}{2}\left(i\delta_1(f_{12}-f_{11})+f_{21}+f_{22}\right),
\end{equation}
and
\begin{equation}
x(y,t)=y-\frac{1}{2i}
\widehat{M}_{1,1}(y,t)=y-\frac{1}{2}\delta_1+\frac{\rho_0^{3/2}}{2}f_{11}t^{-\frac{1}{2}}+\mathcal{O}(t^{-1+\frac{1}{2p}}).
\end{equation}

\section{Asymptotic analysis in  the  region  $y/t>0$ }\label{sec4}

\begin{figure}
\tikzset{every picture/.style={line width=0.75pt}} 
\begin{tikzpicture}[x=0.75pt,y=0.75pt,yscale=-1,xscale=1]
uncomment if require: \path (50,100); 
\draw    (234,54) -- (430,236) ;
\draw    (250,236) -- (418,55) ;
\draw    (149,146.33) -- (508,146.33) ;
\draw [shift={(510,146.33)}, rotate = 180] [color={rgb, 255:red, 0; green, 0; blue, 0 }  ][line width=0.75]    (10.93,-3.29) .. controls (6.95,-1.4) and (3.31,-0.3) .. (0,0) .. controls (3.31,0.3) and (6.95,1.4) .. (10.93,3.29)   ;
\draw    (280,254) -- (280,31) ;
\draw [shift={(280,29)}, rotate = 90] [color={rgb, 255:red, 0; green, 0; blue, 0 }  ][line width=0.75]    (10.93,-3.29) .. controls (6.95,-1.4) and (3.31,-0.3) .. (0,0) .. controls (3.31,0.3) and (6.95,1.4) .. (10.93,3.29)   ;

\draw (514,137.4) node [anchor=north west][inner sep=0.75pt]    {$\mathrm{Re} k$};
\draw (267,7.6) node [anchor=north west][inner sep=0.75pt]    {$\mathrm{Im} k$};
\draw (267,147.4) node [anchor=north west][inner sep=0.75pt]    {$0$};
\draw (327.9,147.73) node [anchor=north west][inner sep=0.75pt]    {$1$};
\draw (421,44.4) node [anchor=north west][inner sep=0.75pt]    {$\Sigma_{1}$};
\draw (211,42.4) node [anchor=north west][inner sep=0.75pt]    {$\Sigma_{2}$};
\draw (225,230.4) node [anchor=north west][inner sep=0.75pt]    {$\Sigma_{3}$};
\draw (434,229.4) node [anchor=north west][inner sep=0.75pt]    {$\Sigma_{4}$};
\draw (384,117.4) node [anchor=north west][inner sep=0.75pt]    {$\Omega_{1}$};
\draw (386,163.4) node [anchor=north west][inner sep=0.75pt]    {$\Omega_{4}$};
\draw (243,167.4) node [anchor=north west][inner sep=0.75pt]    {$\Omega_{3}$};
\draw (241,116.4) node [anchor=north west][inner sep=0.75pt]    {$\Omega_{2}$};
\draw (166,94.4) node [anchor=north west][inner sep=0.75pt]  [font=\large,color={rgb, 255:red, 243; green, 17; blue, 17 }  ,opacity=1 ]  {$+$};
\draw (167,186.4) node [anchor=north west][inner sep=0.75pt]  [font=\large,color={rgb, 255:red, 236; green, 19; blue, 19 }  ,opacity=1 ]  {$-$};
\end{tikzpicture}
\caption{The signature of $\mathrm{Im}\theta$ in the case $\xi>0$ and the jump lines $\Sigma_j$}
\label{line2}
\end{figure}
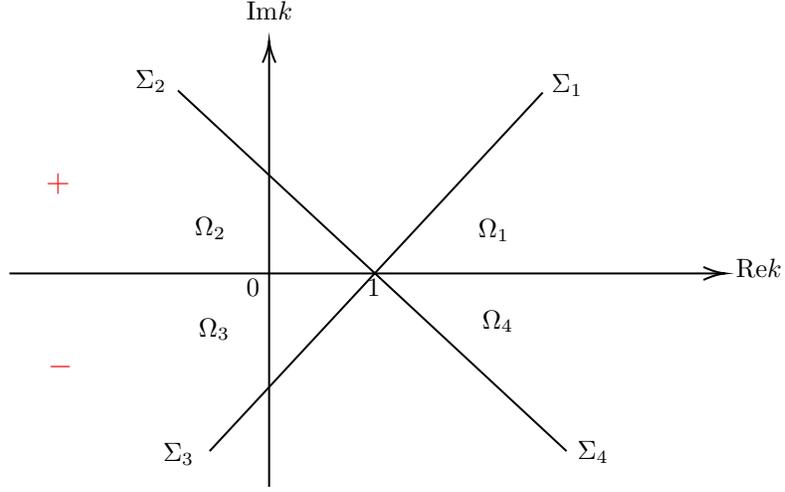

In this section,   we investigate the long-time asymptotic behavior in  the  region  $y/t>0$.
In this case, there is no stationary point, and the jump matrix $\widehat{V}(k)$ takes the following decomposition:
\begin{equation}
\widehat{V}(k)=\left(\begin{array}{cc}
1&-\bar{r}(k)e^{-2it\theta}\\0&1\end{array}\right)\left(\begin{array}{cc}
1&0\\r(k)e^{2it\theta}&1\end{array}\right).
\end{equation}

Considering the reconstruction formula of $u$, we choose to open the jump line at $k=1$. Define several regions and lines as shown in Figure \ref{line2}:
\begin{eqnarray}
&&\Omega_{2n+1}=\{k\in\mathbb{C}|n\pi\leq \arg(k-1)\leq n\pi+\frac{n\pi}{4}\},\\
&&\Omega_{2n+2}=\{k\in\mathbb{C}|(n+1)\pi-\frac{n\pi}{4}\leq \arg(k-1)\leq (n+1)\pi\},
\end{eqnarray}
where $n=0,1$;
\begin{equation}
\Sigma_j=e^{\frac{(j-1)i\pi}{2}+\frac{i\pi}{4}}(\Gamma_+),\quad\Gamma_+=\{k\in\mathbb{R}|k\geq 1\},\quad  j=1,2,3,4,
\end{equation}
and denote $\Sigma^{(1)}:=\bigcup_{j=1}^4\Sigma_j$.
\subsection{A mixed $\bar{\partial}$-RH problem and its decomposition}
In this part, we open the jump line at $k=1$ to make a continuous extension, and the first step is to introduce several new functions:
\begin{proposition}
Define functions $R_j\equiv R_j(k):\bar{\Omega}_j\rightarrow\mathbb{C}, j=1,2,3,4$ with the following boundary values:
\begin{eqnarray}
&&R_1=\left\{\begin{array}{ll}
-r(k),&k\in\Gamma_+,\\-r(1),& k\in \Sigma_1,\end{array}\right.,\qquad
R_2=\left\{\begin{array}{ll}
-r(k),& k\in\mathbb{R}\setminus\Gamma_+,\\-r(1),& k\in \Sigma_2,\end{array}\right.\\
&&R_3=\left\{\begin{array}{ll}
-\bar{r}(k),& k\in\mathbb{R}\setminus\Gamma_+,\\-\bar{r}(1),& k\in \Sigma_3,\end{array}\right.,\quad
R_4=\left\{\begin{array}{ll}
-\bar{r}(k),& k\in \Gamma_+,\\-\bar{r}(1),& k\in \Sigma_4,\end{array}\right.
\end{eqnarray}
And $R_j$ satisfies the following estimates:
\begin{equation}\label{r14}
|\bar{\partial}R_j|\lesssim |r'(\mathrm{Re}k)|+|k-1|^{-1/2},\ for\ k\in \Omega_j,\ j=1,4;
\end{equation}
\begin{equation}\label{r23}
|\bar{\partial}R_j|\lesssim |r'(\mathrm{Re}k)|+|k-1|^{-1/2}+\varphi(\mathrm{Re}k), for\ k\in \Omega_j,\ j=2,3,
\end{equation}
where $\varphi \in C^{\infty}_0(\mathbb{R},[0,1])$ is a fixed cut-off function with support near 0.

In a small fixed neighborhood of 0, $for\ k \in \Omega_j,\ j=2,3,\ $we have
\begin{equation}\label{r230}
|\bar{\partial}R_j|\lesssim |k|^2\rightarrow0,\ as\ k\rightarrow0.
\end{equation}
\end{proposition}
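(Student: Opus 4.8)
The plan is to construct each extension $R_j$ explicitly as an interpolation between its two prescribed boundary traces and then bound $\bar\partial R_j$ by direct computation, following verbatim the template of the proof of Proposition~\ref{Rjl}. Since the only relevant point now is $k=1$, I would work in the polar coordinates $k=1+\rho e^{i\alpha}$ centred at $k=1$, in which
\[
\bar\partial=\tfrac12 e^{i\alpha}\bigl(\partial_\rho+i\rho^{-1}\partial_\alpha\bigr),
\]
exactly as in \eqref{dbar}. The decisive geometric point is that the origin $k=0$ lies on $\mathbb{R}\setminus\Gamma_+$, which borders only the wedges $\Omega_2$ and $\Omega_3$, whereas $\Omega_1$ and $\Omega_4$ abut $\Gamma_+=\{k\ge1\}$ and therefore satisfy $\mathrm{Re}\,k\ge1$ throughout, staying away from $k=0$. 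Hence $R_1,R_4$ can be built by the simple one-term interpolation, while $R_2,R_3$ require the refined near-zero correction that is responsible for \eqref{r230}.

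For $R_1$ and $R_4$ I would take, in analogy with the $R_{j2},R_{j3}$ construction of Proposition~\ref{Rjl},
\[
R_1=-r(\mathrm{Re}\,k)\cos(2\alpha)-r(1)\bigl(1-\cos(2\alpha)\bigr),
\]
and the conjugate expression with $\bar r$ for $R_4$. By construction $R_1=-r(k)$ on $\Gamma_+$ (where $\alpha=0$) and $R_1=-r(1)$ on $\Sigma_1$ (where $2\alpha=\pi/2$), which matches the prescribed data. Applying the polar $\bar\partial$-formula, the $\partial_\rho$-term produces a factor $r'(\mathrm{Re}\,k)$, while the $\rho^{-1}\partial_\alpha$-term is dominated by $\rho^{-1}\lvert r(\mathrm{Re}\,k)-r(1)\rvert\lesssim\lvert k-1\rvert^{-1/2}$ by the $\tfrac12$-Hölder continuity of $r$ coming from $r\in H^1(\mathbb{R})$. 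This yields \eqref{r14}; no cut-off term is present because no $\chi_0$ enters the construction.

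For $R_2$ and $R_3$ the construction near $k=0$ is the genuine difficulty, and I would reproduce the two-piece decomposition $R=R_{(1)}+R_{(2)}$ of Proposition~\ref{Rjl}. The first piece $R_{(1)}$ interpolates the boundary values through a cut-off $\chi_0$ supported away from the origin, so that its $\bar\partial$-derivative contributes exactly the three terms $\lvert r'(\mathrm{Re}\,k)\rvert$, $\lvert k-1\rvert^{-1/2}$, and $\varphi(\mathrm{Re}\,k)$ (the last from $\chi_0'$) of \eqref{r23}. The second piece $R_{(2)}$ is assembled from the truncated Taylor data $\tilde f,\tilde f',\tilde f''$ of $-r(\mathrm{Re}\,k)$ against the weights $\cos^2(2\alpha)$, $\rho e^{-i\alpha}\sin(2\alpha)\cos(2\alpha)$ and $\rho^2 e^{-2i\alpha}\sin^2(2\alpha)$, so arranged that the potentially singular $\rho^{-1}\partial_\alpha$ contribution cancels to second order at the origin. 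Invoking Remark~\ref{rmk1}, which ensures $r,r',r'',r'''$ are bounded near $k=0$ because $r\in H^4(\mathbb{R})\hookrightarrow C^3(\mathbb{R})$, and using that the surviving angular factors vanish quadratically as $k\to0$ in these wedges, gives $\lvert\bar\partial R_j\rvert\lesssim\lvert k\rvert^2$, which is \eqref{r230}. This final cancellation is precisely the main obstacle: it is what removes the spurious $k^{-3}$ singularity in the ensuing pure $\bar\partial$-analysis, and everything else is a routine repetition of the estimates already carried out for Proposition~\ref{Rjl}.
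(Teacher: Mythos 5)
Your proposal is correct and follows essentially the same route as the paper's proof: the paper likewise defines $R_1=(r(1)-r(\mathrm{Re}k))\cos(2\alpha)-r(1)$ (your formula, merely rearranged) and decomposes $R_2=R_{21}+R_{22}$, with $R_{21}$ the cut-off interpolation estimated via the $\tfrac12$-H\"older bound $|r(\mathrm{Re}k)-r(1)|\lesssim|k-1|^{1/2}$ and $R_{22}$ taken to be the same near-zero correction as $R_{11,2}$ in Proposition \ref{Rjl}, with the $|k|^2$ bound obtained from Remark \ref{rmk1} exactly as you describe.
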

\begin{proof}
 We first give the proof of \eqref{r23} and take $R_2(k)$ for example. Here we take the cut-off function $\chi_0(x)\in\ C^{\infty}_0([0,1])$ as $\rho_0=1$:
\begin{equation}
\chi_0(x)=\left\{\begin{array}{ll}
1,&|x|\leqslant1/8,\\
0,&|x|\geqslant1/4.
\end{array}\right.
\end{equation}
Define the function $R_2$ as follows:
\begin{equation}
R_2=R_{21}+R_{22},
\end{equation}
where
\begin{equation}
\begin{aligned}
R_{21}=&\left(r(1)-[1-\chi_0(\mathrm{Re}k)]r(\mathrm{Re}k)\right)cos^2(2\alpha)-r(1),\\
R_{22}=&\tilde{f}(\mathrm{Re}k)cos^2(2\alpha)+\frac{i}{2}\rho e^{-i\alpha}sin(2\alpha)cos(2\alpha)\chi_0(\alpha)\tilde{f}'(\mathrm{Re}k)+sin^2(2\alpha)\chi_0(\alpha)\tilde{f}(\mathrm{Re}k)\\
&-\frac{1}{4}\rho e^{-i\alpha}sin^2(2\alpha)\tilde{f}'(\mathrm{Re}k)-\frac{1}{8}\rho^2 e^{-2i\alpha}sin^2(2\alpha)\tilde{f}''(\mathrm{Re}k),
\end{aligned}
\end{equation}
and
\begin{equation}
\tilde{f}(\mathrm{Re}k)=-\chi_0(\mathrm{Re}k)r(\mathrm{Re}k).
\end{equation}
Denote $k=1+\rho e^{i\alpha}$, then $\rho=|k-1|$ and
$
\bar{\partial}=\frac{1}{2}e^{i\alpha}(\partial_{\rho}+i\rho^{-1}\partial_{\alpha}).
$
So we have
\begin{equation}\label{dbar21}
\begin{aligned}
\bar{\partial}R_{21}=&\frac{1}{2}\chi_0'(\mathrm{Re}k)r(\mathrm{Re}k)cos^2(2\alpha)-2ie^{i\alpha}\rho^{-1}\chi_0(\mathrm{Re}k)r(\mathrm{Re}k)sin(2\alpha)cos(2\alpha)
\\&-\frac{1}{2}(1-\chi_0(\mathrm{Re}k))r'(\mathrm{Re}k)cos^2(2\alpha)+2ie^{i\alpha}\rho^{-1}(r(\mathrm{Re}k)-r(1))sin(2\alpha)cos(2\alpha).
\end{aligned}
\end{equation}
Considering that $r(\mathrm{Re}k),\ \rho^{-1}$ is bounded in the support of $\chi_0(\mathrm{Re}k)$, the first two terms of \eqref{dbar21} are  controlled by a function $\varphi\in C^{\infty}_0(\mathbb{R},[0,1])$. The last term of \eqref{dbar21} is estimated based on the inequality:
\begin{equation}
|r(\mathrm{Re}k)-r(1)|\lesssim|\mathrm{Re}k-1|^{1/2}\leqslant\rho^{1/2},
\end{equation}
and then
\begin{equation}
|\bar{\partial}R_{21}|\lesssim\varphi(\mathrm{Re}k)+|r'(\mathrm{Re}k)|+|k-1|^{-1/2}.
\end{equation}
As for $R_{22}$, it's obvious that it has the same formula as $R_{11,2}$ in the proof of Proposition \ref{Rjl}.
We can similarly get the following results:
\begin{equation}
|\bar{\partial}R_{11,2}|\lesssim\varphi(\mathrm{Re}k),\ k\in \Omega_{11},\quad
|\bar{\partial}R_{11,2}|\lesssim|k|^2,\quad as\ k\to 0.
\end{equation}
Combining all above, we derive \eqref{r23} and \eqref{r230}.

Finally, we give the proof of \eqref{r14}. Define $R_1$ by
\begin{equation}
R_1(k)=\left(r(1)-r(\mathrm{Re}k)\right)cos(2\alpha)-r(1),
\end{equation}
 and $\bar{\partial}R_1$ is estimated by the similar way as $\bar{\partial}R_{21}$.
\end{proof}
Now we introduce a matrix function by the following transformation
\begin{equation}
M^{(1)}(k)\equiv M^{(1)}(y,t,k)=\widehat{M}(k)R^{(1)}(k),
\end{equation}
where
\begin{equation}
R^{(1)}(k)=\left\{\begin{array}{ll}
\left(\begin{array}{cc}1&0\\R_je^{2it\theta}&1\end{array}\right),& k\in\Omega_1\cup \Omega_2;\\
\left(\begin{array}{cc}1&R_je^{-2it\theta}\\0&1\end{array}\right),& k\in\Omega_3\cup \Omega_4;\\
I,& elsewhere.\end{array}\right.
\end{equation}
Then $M^{(1)}(k)$ is the solution of a mixed $\bar{\partial}$-RH problem as follows:
\begin{rhp}Find a $2\times 2$ matrix-valued function $M^{(1)}(k)$ admitting the following properties:
\begin{itemize}
  \item Analyticity: $M^{(1)}(k)$ is continuous in $\mathbb{C}\setminus\Sigma^{(1)}$;
  \item Jump condition: $M^{(1)}(k)$ has continuous boundary values $M^{(1)}_{\pm}(k)$ on $\Sigma^{(1)}$, and
      \begin{equation}
      M^{(1)}_+(k)=M^{(1)}_-(k)V^{(1)}(k),\quad k\in \Sigma^{(1)},
      \end{equation}
      where
      \begin{equation}
      V^{(1)}(k)=\left\{\begin{array}{ll}
      (R^{(1)}(k))^{-1},&k\in\Sigma_1\cup\Sigma_2,\\
      R^{(1)}(k),&k\in\Sigma_3\cup\Sigma_3;
      \end{array}\right.
      \end{equation}
  \item Asymptotic behavior:
$
      M^{(1)}(k)\sim I+\mathcal{O}(k^{-1}),\quad as\ k \to \infty;
$
  \item $\bar{\partial}$- derivative:
  \begin{equation}
  \bar{\partial}M^{(1)}(k)=M^{(1)}(k)\bar{\partial}R^{(1)}(k).
  \end{equation}
\end{itemize}
\end{rhp}

To solve $M^{(1)}(k)$, we decompose it into a model RH problem for $M^{R}(k)\equiv M^{R}(y,t,k)$ with $\bar{\partial}M^{R}(k)\equiv0$ and a pure $\bar{\partial}$- Problem.
First we give a RH problem for $M^{R}(k)$:
\begin{rhp}\label{mr2}
Find a $2\times 2$ matrix-valued function $M^{R}(k)$ satisfying the properties as follows:
\begin{itemize}
  \item Analyticity: $M^{R}(k)$ is analytic in $\mathbb{C}\setminus\Sigma^{(1)}$;
  \item Jump condition: $M^R(k)$ has continuous boundary values $M^R_{\pm}(k)$ on $\Sigma^{(1)}$ and
      \begin{equation}
      M^{R}_+(k)=M^{R}_-(k)-V^{(1)}(k),\quad k\in \Sigma^{(1)};
      \end{equation}
  \item Asymptotic behavior:
$
  M^{R}(k)\sim I+\mathcal{O}(k^{-1}),\quad as\  k\to\infty.
$
\end{itemize}
\end{rhp}
Then, we define $M^{(2)}(k)\equiv M^{(2)}(y,t,k)$ as
\begin{equation}
M^{(2)}(k)= M^{(1)}(k)M^{R}(k)^{-1}.
\end{equation}
$M^{(2)}(k)$ is solution of a new $\bar{\partial}$-problem as follows:
\begin{dbar}Find a $2\times 2$ matrix-valued function $M^{(2)}(k)$ with the following properties:
\begin{itemize}
  \item Analyticity: $M^{(2)}(k)$ is continuous in $\mathbb{C}$.
  \item Asymptotic behavior:
$
  M^{(2)}(k)\rightarrow I,\quad k\rightarrow \infty;
$
  \item $\bar{\partial}$-Derivative:
  \begin{equation}
  \bar{\partial}M^{(2)}(k)=M^{(2)}(k)W(k),\quad k\in\mathbb{C},
  \end{equation}
  where
  \begin{equation}
  W(k)=M^{R}(k)\bar{\partial}R^{(1)}(k)(M^{R}(k))^{-1}.
  \end{equation}
\end{itemize}
\end{dbar}
\subsection{Analysis on the pure RH problem}
We first give an estimate for the jump matrix of $M^R(k)$ as follows:
\begin{proposition}\label{prop9}
As $t\rightarrow \infty$, we have
\begin{equation}
\|V^{(1)}(k)-I\|_{L^2(\Sigma^{(1)})}=O(t^{-1/2}) .
\end{equation}
\end{proposition}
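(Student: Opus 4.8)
The plan is to reduce the $L^2$-estimate of $V^{(1)}(k)-I$ to an elementary exponential integral along each of the four rays $\Sigma_j$, exploiting that in the region $\xi>0$ there is no stationary point, so that $\mathrm{Im}\,\theta$ grows linearly away from the base point $k=1$. On each $\Sigma_j$ the only nonzero entry of $V^{(1)}(k)-I$ is the off-diagonal term $R_je^{\pm 2it\theta}$, and by the boundary values in the definition of $R_j$ we have $R_j\equiv -r(1)$ or $-\bar r(1)$ on $\Sigma_j$; hence the factor $R_j$ is a bounded constant there, and it suffices to control $\|e^{\pm 2it\theta}\|_{L^2(\Sigma_j)}$.

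First I would parametrize each ray emanating from $k=1$ by $k=1+\ell e^{i\alpha_j}$, $\ell\geq 0$, with $\alpha_j=\tfrac{(j-1)\pi}{2}+\tfrac{\pi}{4}$, and invoke the identity \eqref{Imthta}, namely $\mathrm{Im}\,\theta=\mathrm{Im}\,k\,(\xi+\tfrac{1}{2|k|^2})$. Since $|\mathrm{Im}\,k|=\ell/\sqrt2$ on all four rays and the contour stays uniformly away from $k=0$ — one checks $|k|^2=1\pm\sqrt2\,\ell+\ell^2\geq \tfrac12$ — the factor $\xi+\tfrac{1}{2|k|^2}$ is bounded below by a positive constant for $\xi>0$. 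Thus there is $c>0$ with $|\mathrm{Im}\,\theta|\geq c\,\ell$ along each ray. In accordance with Lemma \ref{Imk}, on $\Sigma_1,\Sigma_2$ (upper half-plane) this gives $|e^{2it\theta}|=e^{-2t\,\mathrm{Im}\,\theta}\leq e^{-2ct\ell}$, and on $\Sigma_3,\Sigma_4$ (lower half-plane) $|e^{-2it\theta}|=e^{2t\,\mathrm{Im}\,\theta}\leq e^{-2ct\ell}$.

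It then remains to compute
\[
\|V^{(1)}-I\|_{L^2(\Sigma_j)}^2\lesssim |r(1)|^2\int_0^\infty e^{-4ct\ell}\,d\ell\lesssim t^{-1},
\]
and summing over $j=1,2,3,4$ and taking square roots yields $\|V^{(1)}-I\|_{L^2(\Sigma^{(1)})}=\mathcal{O}(t^{-1/2})$.

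The main obstacle — indeed essentially the only point requiring care — is the uniform lower bound $|\mathrm{Im}\,\theta|\geq c\,\ell$, and this is precisely where the absence of a stationary point in the region $\xi>0$ is decisive. If $\theta'$ vanished at some point of $\Sigma^{(1)}$, then $\mathrm{Im}\,\theta$ would vanish quadratically there and the above integral would only give the weaker rate $t^{-1/4}$; here the linear growth of $\mathrm{Im}\,\theta$ from $k=1$ is what produces the sharper $t^{-1/2}$ estimate. A secondary technical check is that the rays avoid the singularity of $\theta$ at $k=0$, which the bound $|k|^2\geq\tfrac12$ guarantees, so that the term $\tfrac{1}{2|k|^2}$ stays bounded and only contributes positively to $|\mathrm{Im}\,\theta|$.
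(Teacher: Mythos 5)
Your proof is correct and follows essentially the same route as the paper: parametrize each ray from $k=1$ by arc length $\ell$, use that $R_j$ is the constant $-r(1)$ or $-\bar r(1)$ on $\Sigma_j$, bound $|e^{\pm 2it\theta}|\leq e^{-2ct\ell}$ via the sign of $\mathrm{Im}\,\theta$ for $\xi>0$, and evaluate the resulting exponential integral to get $t^{-1/2}$. The paper simply treats $\Sigma_1$ as a representative case with the same parametrization, so your only additions are the explicit verification of the uniform lower bound $|\mathrm{Im}\,\theta|\geq c\ell$ and the remaining three rays.
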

\begin{proof}
Take the case in $\Sigma_1$ for example:
\begin{equation}
\| V^{(1)}(k)-I\|_{L^{2}(\Sigma_1)}=\left(\int_{\Sigma_1}|r(1)e^{2it\theta}|^2\mathrm{d}s\right)^{1/2}\lesssim \left(\int_0^{+\infty}(e^{-ctl})^2\mathrm{d}l\right)^{1/2}\lesssim t^{-1/2},
\end{equation}
where $s=1+le^{\frac{3i}{4}},\ \mathrm{Im}s=\frac{\sqrt{2}}{2}l.$
\end{proof}
To solve the RH problem $M^R(k)$, we introduce an operator $\mathcal{C}_{\omega}:\ L^2(\Sigma^{(1)})\to L^2(\Sigma^{(1)})$ as:
\begin{equation}
\mathcal{C}_{\omega}f=\mathcal{C}_-(f(V^{(1)}-I)),
\end{equation}
where $\mathcal{C}_-$ is the Cauchy projection operator on $\Sigma^{(1)}$.  
Based on Proposition \ref{prop9}, we have
\begin{equation}
\| \mathcal{C}_{\omega_e}\|_{L^2}\leqslant\| \mathcal{C}_-\|_{L^2}\| V^{(1)}-I\|_{L^2}\lesssim t^{-1/2},
\end{equation}
thus there exists $\mu(k)$ satisfying
\begin{equation}
(I-\mathcal{C}_{\omega_e})\mu(k)=I.
\end{equation}
Therefore, there exists a unique solution $M^R(k)$ for RH problem \ref{mr2} represented by
\begin{equation}
M^R(k)=I+\frac{1}{2\pi i}\int_{\Sigma^{(1)}}\frac{\mu(s)(V^{(1)}(s)-I)}{s-k}\mathrm{d}s
\end{equation}
with the following property:
\begin{proposition}\label{MR-2}
$M^R(k)$ has the asymptotic expansion as $k\to0$:
\begin{equation}
M^R(k)=M^R(0)+M^R_1k+M^R_2k^2+\mathcal{O}(k^3),
\end{equation}
where
\begin{equation}
M^R_1=\frac{1}{2\pi i}\int_{\Sigma^{(1)}}\frac{\mu(s)(V^{(1)}-I)}{s^2}\mathrm{d}s,\quad M^R_2=\frac{1}{2\pi i}\int_{\Sigma^{(1)}}\frac{\mu(s)(V^{(1)}-I)}{s^3}\mathrm{d}s.
\end{equation}
Besides, they satisfy the estimates as follows:
\begin{equation}
|M^R(0)-I|\lesssim t^{-1/2},\ |M^R_j|\lesssim t^{-1/2},\ j=1,2.
\end{equation}
\end{proposition}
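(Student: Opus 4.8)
The plan is to work directly from the Beals--Coifman integral representation
\begin{equation*}
M^R(k)=I+\frac{1}{2\pi i}\int_{\Sigma^{(1)}}\frac{\mu(s)(V^{(1)}(s)-I)}{s-k}\mathrm{d}s,
\end{equation*}
and to extract the small-$k$ expansion by exploiting the geometry of the contour. The essential structural observation is that every ray $\Sigma_j$ emanates from $k=1$ and travels away from the origin, so that $\Sigma^{(1)}$ is uniformly bounded away from $k=0$: there is a constant $d_0>0$ (one checks $d_0=1/\sqrt2$) with $|s|\ge d_0$ for all $s\in\Sigma^{(1)}$. Consequently the Cauchy integral defines a function analytic in the disc $|k|<d_0$, and in particular $M^R(k)$ admits a genuine Taylor expansion at $k=0$.

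First I would expand the kernel. For $|k|<d_0$ the geometric series $\frac{1}{s-k}=\sum_{n\ge0}k^n s^{-(n+1)}$ converges uniformly on $\Sigma^{(1)}$, and substituting it into the representation above produces exactly
\begin{equation*}
M^R(0)=I+\frac{1}{2\pi i}\int_{\Sigma^{(1)}}\frac{\mu(s)(V^{(1)}(s)-I)}{s}\mathrm{d}s,
\end{equation*}
together with the stated formulas for $M^R_1$ and $M^R_2$ coming from the $n=1,2$ terms; the tail $\sum_{n\ge3}$ is controlled uniformly and gives the remainder $\mathcal{O}(k^3)$. This step is purely formal and requires only the convergence afforded by $|s|\ge d_0$.

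The quantitative heart of the proof is the size estimate of each coefficient. I would split $\mu=I+(\mu-I)$ in every integral, for instance
\begin{equation*}
M^R(0)-I=\frac{1}{2\pi i}\int_{\Sigma^{(1)}}\frac{V^{(1)}-I}{s}\mathrm{d}s+\frac{1}{2\pi i}\int_{\Sigma^{(1)}}\frac{(\mu-I)(V^{(1)}-I)}{s}\mathrm{d}s.
\end{equation*}
Since $|s|\ge d_0$ and each ray decays like $|s|^{-1}$ at infinity, the weights $s^{-j}$ $(j=1,2,3)$ lie in $L^2(\Sigma^{(1)})\cap L^\infty(\Sigma^{(1)})$. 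Applying Cauchy--Schwarz, the first integral is bounded by $\|s^{-1}\|_{L^2}\|V^{(1)}-I\|_{L^2}\lesssim t^{-1/2}$ by Proposition \ref{prop9}, while the second is bounded by $\|s^{-1}\|_{L^\infty}\|\mu-I\|_{L^2}\|V^{(1)}-I\|_{L^2}\lesssim t^{-1}$, where $\|\mu-I\|_{L^2}\lesssim t^{-1/2}$ follows from $\mu-I=(I-\mathcal{C}_\omega)^{-1}\mathcal{C}_-(V^{(1)}-I)$ and the resolvent bound $\|\mathcal{C}_\omega\|_{L^2}\lesssim t^{-1/2}$. Hence $|M^R(0)-I|\lesssim t^{-1/2}$, and the identical argument with $s^{-2}$ and $s^{-3}$ in place of $s^{-1}$ yields $|M^R_j|\lesssim t^{-1/2}$ for $j=1,2$.

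I do not expect a genuine obstacle here: because Region II contains no stationary point, the phase $e^{2it\theta}$ decays exponentially along all of $\Sigma^{(1)}$ and the whole jump is already of order $t^{-1/2}$ in $L^2$, with no parabolic-cylinder model required. The only point that must be handled with care is confirming that $\Sigma^{(1)}$ stays away from $k=0$ --- this is precisely why the jump line was opened at $k=1$ rather than at a stationary point --- since it is this separation that legitimizes the geometric-series expansion and keeps the weights $s^{-j}$ bounded.
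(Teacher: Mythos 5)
Your proof is correct and follows exactly the route the paper has in mind: the paper's own ``proof'' is a single sentence (``Considering that $s^{-1}$ is bounded on $\Sigma^{(1)}$, the results are obvious''), relying on precisely the fact you verify, namely that $\Sigma^{(1)}$ stays a distance $1/\sqrt{2}$ from the origin, together with Proposition \ref{prop9} and the splitting $\mu=I+(\mu-I)$ already used in Proposition \ref{estiE}. Your write-up simply supplies the details (geometric series, Cauchy--Schwarz, resolvent bound for $\|\mu-I\|_{L^2}$) that the paper omits.
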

\begin{proof}
Considering that $s^{-1}$ is bounded on $\Sigma^{(1)}$, the results are obvious, so we omit the proof.
\end{proof}
\subsection{Analysis on the pure $\bar{\partial}$-problem}
In this part, we deal with the pure $\bar{\partial}$-problem $M^{(2)}(k)$, which is presented by
\begin{equation}
M^{(2)}(k)=I+\frac{1}{\pi}\iint_{\mathbb{C}}
\frac{M^{(2)}(s)W(s)}{s-k}\mathrm{d}A(s),
\end{equation}
where $A(s)$ is the Lebesgue measure on $\mathbb{C}$. Denote the operator $S$ as:
\begin{equation}
S[f](k)=\frac{1}{\pi}\iint_{\mathbb{C}}
\frac{f(s)W(s)}{s-k}\mathrm{d}A(s),
\end{equation}
then
$
(I-S)M^{(2)}(k)=I.
$ We have an estimate of $S$ as follows:
\begin{proposition}
As $t\rightarrow \infty$, the norm of the integral operator S decays to zero:
\begin{equation}
\|S\|_{L^{\infty}\to L^{\infty}}\lesssim t^{-1/2},
\end{equation}
which implies that $(I-S)^{-1}$ exists.
\end{proposition}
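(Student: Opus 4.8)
The plan is to mirror the argument for Proposition~\ref{s-op} in Section~\ref{sec3}, which is in fact somewhat simpler here because in the region $\xi>0$ there is no stationary point and $\mathrm{Im}\,\theta$ carries a definite sign. Since $M^R(k)$ and its inverse are uniformly bounded for large $t$ (the solution of a regular RH problem whose jump is close to $I$ by Proposition~\ref{prop9}), and since $\bar{\partial}R^{(1)}$ is supported on $\bigcup_{j=1}^4\Omega_j$, for $f\in L^\infty$ I would estimate
\begin{equation*}
|S[f](k)|\lesssim \|f\|_{L^\infty}\,\frac{1}{\pi}\sum_{j=1}^4\iint_{\Omega_j}\frac{|\bar{\partial}R_j|\,|e^{\pm 2it\theta}|}{|s-k|}\,\mathrm{d}A(s),
\end{equation*}
so that it suffices to bound each of the four integrals by $t^{-1/2}$, uniformly in $k$.

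First I would record the decay of the oscillatory factor. By \eqref{Imthta} and Lemma~\ref{Imk}, for $\xi>0$ one has $\mathrm{Im}\,\theta>\xi\,\mathrm{Im}\,k$ in the upper half-plane and $\mathrm{Im}\,\theta<\xi\,\mathrm{Im}\,k$ in the lower half-plane. Hence on $\Omega_1\cup\Omega_2$, where the relevant phase is $e^{2it\theta}$, and on $\Omega_3\cup\Omega_4$, where it is $e^{-2it\theta}$, one gets $|e^{\pm2it\theta}|\le e^{-c t v}$ with $v=|\mathrm{Im}\,s|$ and $c=2\xi>0$. This single sign fact replaces the more delicate analysis of the two stationary points in Section~\ref{sec3}.

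Next, using the extension estimates \eqref{r14}--\eqref{r23}, I would split $|\bar{\partial}R_j|$ into the pieces $\varphi(\mathrm{Re}\,s)$, $|r'(\mathrm{Re}\,s)|$ and $|s-1|^{-1/2}$, and treat the resulting integrals exactly as $I_1,I_2,I_3$ in the proof of Proposition~\ref{s-op}. Writing $s=1+u+iv$ and $k=\alpha+i\eta$, the $\varphi$- and $r'$-terms are handled by Cauchy--Schwarz in $u$: since $\varphi,r'\in L^2(\mathbb{R})$ and $\|(s-k)^{-1}\|_{L^2_u}\lesssim|v-\eta|^{-1/2}$, the remaining integral $\int_0^\infty e^{-ctv}|v-\eta|^{-1/2}\,\mathrm{d}v$ yields $t^{-1/2}$. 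For the singular term I would use H\"older with $2<p<\infty$ and $1/p+1/q=1$, together with $\||s-1|^{-1/2}\|_{L^p_u}\lesssim v^{1/p-1/2}$ and $\|(s-k)^{-1}\|_{L^q_u}\lesssim|v-\eta|^{1/q-1}$, so that $\int_0^\infty v^{1/p-1/2}|v-\eta|^{1/q-1}e^{-ctv}\,\mathrm{d}v$ again contributes $t^{-1/2}$.

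The main obstacle is exactly the one met in Proposition~\ref{s-op}: controlling the product of the $|s-1|^{-1/2}$ singularity of $\bar{\partial}R_j$ against the Cauchy kernel $|s-k|^{-1}$ uniformly in $k$. The H\"older splitting with an auxiliary exponent $p>2$ is precisely what keeps both factors locally integrable while preserving the $t^{-1/2}$ gain from the oscillation. I emphasize that here, in contrast to the $\xi<0$ regime, the refined behavior of $\bar{\partial}R_j$ near $k=0$ recorded in \eqref{r230} is not needed for the operator-norm bound; it enters only in the later expansion of the solution. Once all four integrals are shown to be $\mathcal{O}(t^{-1/2})$, the bound $\|S\|_{L^\infty\to L^\infty}\lesssim t^{-1/2}\to0$ makes $I-S$ invertible via a Neumann series for $t$ large, which is the asserted conclusion.
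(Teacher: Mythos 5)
Your proposal is correct and takes essentially the same route as the paper: the paper's own proof of this proposition is just the remark that it is ``similar as Proposition~\ref{s-op}'', and your argument is precisely that adaptation, with the definite sign of $\mathrm{Im}\,\theta$ for $\xi>0$ replacing the stationary-point analysis and the same $\varphi$, $|r'|$, $|s-1|^{-1/2}$ splitting handled by Cauchy--Schwarz and H\"older with $p>2$. Your observations that the jump point is $k=1$ rather than $k_j$ and that the refined bound \eqref{r230} near $k=0$ is not needed for the operator-norm estimate are both consistent with the paper's treatment.
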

\begin{proof}
The proof is similar as Proposition \ref{s-op}.
\end{proof}
Therefore, $M^{(2)}(k)$ exists uniquely and satisfies the following Proposition:
\begin{proposition}\label{M2-2}
As $k \rightarrow 0$, $M^{(2)}(k)$ has the following asymptotic expansion:
\begin{equation}
M^{(2)}(k)=M^{(2)}(0)+M^{(2)}_1k+M^{(2)}_2k^2+O(k^3),
\end{equation}
where
\begin{equation}
|M^{(2)}(0)-I|=|\frac{1}{\pi}\iint_{\mathbb{C}}
\frac{M^{(2)}(s)W(s)}{s}\mathrm{d}A(s)|\lesssim t^{-1/2},
\end{equation}
and
\begin{eqnarray}
&&M^{(2)}_1=-\frac{1}{\pi}\iint_{\mathbb{C}}
\frac{M^{(2)}(s)W(s)}{s^2}\mathrm{d}A(s),\\
&&M^{(2)}_2=-\frac{1}{\pi}\iint_{\mathbb{C}}
\frac{M^{(2)}(s)W(s)}{s^3}\mathrm{d}A(s)
\end{eqnarray}
satisfying
\begin{equation}
|M^{(2)}_j|\lesssim t^{-1/2},\ as\ t\rightarrow \infty,\ for\ j=1,2.
\end{equation}
\end{proposition}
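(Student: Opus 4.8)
The plan is to mirror the argument of Proposition~\ref{estiM3}, but the estimates are considerably softer here because in the region $\xi>0$ there is no stationary point and $\mathrm{Im}\theta$ keeps the same sign as $\mathrm{Im}k$ throughout (Lemma~\ref{Imk}). First I would note that the coefficients in the expansion follow at once from the geometric series
\begin{equation*}
\frac{1}{s-k}=\frac{1}{s}+\frac{k}{s^{2}}+\frac{k^{2}}{s^{3}}+\mathcal{O}(k^{3}),
\end{equation*}
which is legitimate as $k\to0$ since the $\bar{\partial}$-data $W(k)=M^{R}\,\bar{\partial}R^{(1)}\,(M^{R})^{-1}$ is supported in $\Omega=\bigcup_{j=1}^{4}\Omega_{j}$, whose only approach to the origin is through the wedges $\Omega_{2},\Omega_{3}$. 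Substituting this expansion into the integral equation for $M^{(2)}(k)$ reproduces precisely the stated formulas for $M^{(2)}(0)-I$, $M^{(2)}_{1}$ and $M^{(2)}_{2}$.

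For the estimates I would first record that, since $\|S\|_{L^{\infty}\to L^{\infty}}\lesssim t^{-1/2}\to0$, the operator $(I-S)^{-1}$ is bounded on $L^{\infty}$ uniformly in $t$, whence $\|M^{(2)}\|_{L^{\infty}(\mathbb{C})}\lesssim1$; together with the control $\|M^{R}\|_{L^{\infty}},\|(M^{R})^{-1}\|_{L^{\infty}}\lesssim1$ from Proposition~\ref{MR-2} it then suffices to bound the three scalar integrals $\iint_{\Omega}|\bar{\partial}R^{(1)}(s)|\,|s|^{-j}\,e^{-ct|\mathrm{Im}\theta|}\,\mathrm{d}A(s)$ for $j=1,2,3$. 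I would split each $\Omega_{jl}$ into the piece $\Omega_{jl}\cap B(0)$ inside a fixed small disk $B(0)$ about the origin and its complement $\Omega_{jl}\setminus B(0)$.

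On $\Omega_{jl}\setminus B(0)$ the weights $|s|^{-1},|s|^{-2},|s|^{-3}$ are all bounded, so every one of the three integrals reduces to the type already handled in Proposition~\ref{s-op} (equivalently Proposition~\ref{prop9}): writing $s=1+u+iv$ in local coordinates, using the bounds \eqref{r14}--\eqref{r23} on $\bar{\partial}R_{j}$ together with $|\mathrm{Im}\theta|\gtrsim|v|$ and Hölder's inequality with the $L^{2}$ (resp.\ $L^{p}$) control of $\varphi$, $r'$ and $|s-1|^{-1/2}$, one extracts a factor $t^{-1/2}$. This disposes of the contributions away from the origin for all three coefficients.

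The genuinely delicate piece, and the main obstacle, is the behavior on $\Omega_{2}\cup\Omega_{3}$ inside $B(0)$ for the most singular weight $|s|^{-3}$ entering $M^{(2)}_{2}$, where the contour wraps around $k=0$. This is exactly why $R_{2},R_{3}$ were constructed with the vanishing property \eqref{r230}, namely $|\bar{\partial}R_{j}|\lesssim|k|^{2}$: invoking \eqref{r230} cancels two powers of $|s|$, leaving the locally integrable kernel $|s|^{-1}$, and combining this with the near-origin lower bound $|\mathrm{Im}\theta|\gtrsim|v|/|s|^{2}$ reduces the matter to estimating $\iint_{B(0)}|s|^{-1}e^{-ct|\mathrm{Im}\theta|}\,\mathrm{d}A(s)$ as in Proposition~\ref{estiM3}, which yields a bound of order $t^{-1/2}$ (no sharper rate is needed here, in contrast with the $t^{-1+\frac{1}{2p}}$ required in the $\xi<0$ region). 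The same device, now with the milder weight $|s|^{-2}$, controls $M^{(2)}_{1}$, and $M^{(2)}(0)-I$ a fortiori. Collecting the two pieces gives $|M^{(2)}(0)-I|\lesssim t^{-1/2}$ and $|M^{(2)}_{j}|\lesssim t^{-1/2}$ for $j=1,2$, as claimed.
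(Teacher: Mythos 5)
Your overall strategy is exactly the paper's: the paper's proof of Proposition \ref{M2-2} is literally a one-line deferral to Proposition \ref{estiM3}, and your write-up supplies that adaptation correctly in its two essential points, namely the geometric expansion of the Cauchy kernel to identify $M^{(2)}(0)$, $M^{(2)}_1$, $M^{(2)}_2$ (your signs come out opposite to the paper's displayed formulas, but that discrepancy is already present in the paper's own Proposition \ref{estiM3} and is irrelevant to the magnitude estimates), and, crucially, the use of the vanishing property \eqref{r230}, $|\bar{\partial}R_j|\lesssim|k|^2$ on $\Omega_2\cup\Omega_3$ near $k=0$, to cancel the worst weight $|s|^{-3}$; your near-origin bound $|\mathrm{Im}\theta|\geq |\mathrm{Im}k|/(2|k|^2)$, which follows from \eqref{Imthta} since both terms have the sign of $\mathrm{Im}k$ when $\xi>0$, is also correct, as is the uniform $L^\infty$ control of $M^{(2)}$ via the Neumann series for $(I-S)^{-1}$.

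There is, however, one step that fails as written. On $\Omega_j\setminus B(0)$ you bound the weights $|s|^{-1},|s|^{-2},|s|^{-3}$ by constants and assert that the remaining integrals ``reduce to the type already handled in Proposition \ref{s-op}.'' They do not: the integrals in Proposition \ref{s-op} carry the Cauchy kernel $|s-k|^{-1}$, and it is precisely that kernel which serves as the $L^q_u$ H\"older partner and provides integrability in the $u$-direction. Once the weight is discarded there is no partner left, and the $|k-1|^{-1/2}$ portion of the bounds \eqref{r14}--\eqref{r23} is not integrable over the unbounded wedges: for fixed $v$, $\int_v^{\infty}(u^2+v^2)^{-1/4}\,\mathrm{d}u=\infty$, so pairing $|s-1|^{-1/2}\in L^p_u$ with the constant function (which lies in no $L^q_u(v,\infty)$) produces no finite bound. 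The repair is simply to keep the weight as the H\"older partner: on $\Omega_1\cup\Omega_4$ one has $|s|\geq 1$, and on $\Omega_2\cup\Omega_3\setminus B(0)$ the weight is locally bounded and decays at infinity, so for $j\geq 1$ the product $|s|^{-j}|s-1|^{-1/2}$ behaves like $|u|^{-j-1/2}$ for large $|u|$ and the inner $u$-integral is finite (uniformly in $v$, or $\lesssim v^{1/p-1/2}$ after H\"older against $|s|^{-j}\in L^q_u$); the $\varphi$ and $r'$ terms are handled directly by their $L^1_u$ norms, giving $t^{-1}$. With this correction the $v$-integration against $e^{-ct v}$ yields at worst $t^{-1/2}$, and your argument then goes through to the claimed estimates.
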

\begin{proof}
The proof is similar as Proposition \ref{estiM3}.
\end{proof}
\subsection{Long-time asymptotics  for the HS equation}
Now we give the asymptotic expression of the solution $\widehat{M}(k)$  in the case $\xi>0$: as $k\to 0$,
\begin{equation}
\begin{aligned}
\widehat{M}(k)=&M^{(2)}(k)M^R(k)(R^{(1)})^{-1}(M^{(2)}(0)+M^{(2)}_1k+M^{(2)}_2k^2)(M^{R}(0)+M^{R}_1k+M^{R}_2k^2)+\mathcal{O}(k^3)\\
=&(I+\mathcal{O}(t^{-1/2}))(I+\mathcal{O}(t^{-1/2}))+\mathcal{O}(k^3)=I+\mathcal{O}(t^{-1/2})+\mathcal{O}(k^3),
\end{aligned}
\end{equation}
 which together with  the reconstruction formula \eqref{rstr-u}  gives the asymptotic behavior of the solution  $u(x,t)$ of the HS equation  by 
\begin{equation}
u(x,t)=u(y(x,t),t)=\mathcal{O}(t^{-1/2}),\quad t\to\infty,
\end{equation}
and
\begin{equation}
x(y,t)=y-\frac{1}{2i}
\widehat{M}_{1,1}(y,t)=y+\mathcal{O}(t^{-1/2}),\quad t\to\infty.
\end{equation}
In conclusion, we get the main result of this paper as shown in Theorem \ref{main}.
\vspace{5mm}

  \noindent\textbf{Acknowledgements}

    This work is supported by  the National Natural Science
    Foundation of China (Grant No. 11671095, 51879045).\vspace{2mm}

    \noindent\textbf{Data Availability Statements}

    The data that supports the findings of this study are available within the article.\vspace{2mm}

    \noindent{\bf Conflict of Interest}

    The authors have no conflicts to disclose.
\vspace{5mm}

\end{document}